\documentclass{amsart} \usepackage{url}
\usepackage{amsmath,amsfonts,euscript,amsthm,amssymb,amscd}
\usepackage[T1]{fontenc}
\usepackage{lmodern}
\usepackage{epigraph}
\usepackage[utf8]{inputenc}
\usepackage{enumerate}
\usepackage{tikz}

\makeatletter
\setcounter{tocdepth}{3}

\renewcommand{\tocsection}[3]{%
  \indentlabel{\@ifnotempty{#2}{\bfseries\ignorespaces#1 #2\quad}}\bfseries#3}
\renewcommand{\tocsubsection}[3]{%
  \indentlabel{\@ifnotempty{#2}{\ignorespaces#1 #2\quad}}#3}

\newcommand\@dotsep{4.5}
\def\@tocline#1#2#3#4#5#6#7{\relax
  \ifnum #1>\c@tocdepth 
  \else
    \par \addpenalty\@secpenalty\addvspace{#2}%
    \begingroup \hyphenpenalty\@M
    \@ifempty{#4}{%
      \@tempdima\csname r@tocindent\number#1\endcsname\relax
    }{%
      \@tempdima#4\relax
    }%
    \parindent\z@ \leftskip#3\relax \advance\leftskip\@tempdima\relax
    \rightskip\@pnumwidth plus1em \parfillskip-\@pnumwidth
    #5\leavevmode\hskip-\@tempdima{#6}\nobreak
    \leaders\hbox{$\m@th\mkern \@dotsep mu\hbox{.}\mkern \@dotsep mu$}\hfill
    \nobreak
    \hbox to\@pnumwidth{\@tocpagenum{\ifnum#1=1\bfseries\fi#7}}\par
    \nobreak
    \endgroup
  \fi}
\AtBeginDocument{%
\expandafter\renewcommand\csname r@tocindent0\endcsname{0pt}
}
\def\l@subsection{\@tocline{2}{0pt}{2.5pc}{5pc}{}}
\makeatother

\usepackage{lipsum}

\setlength\epigraphwidth{.8\textwidth}
\theoremstyle{definition}
\newtheorem{Def}{Definition}[subsection]
\newtheorem{Def-Prop}[Def]{Definition-Proposition}

\newtheorem{Th}[Def]{Theorem}

\newtheorem{remark}[Def]{Remark}
\newtheorem{Prop}[Def]{Proposition}
\newtheorem{Lemma}[Def]{Lemma}

\newtheorem{Th1}{Theorem}

\DeclareMathOperator*{\summm}{\oplus}

\DeclareMathOperator*{\cdim}{codim}
\newcommand{\hdot}{{\:\raisebox{3pt}{\text{\circle*{1.5}}}}}
\newcommand{\hdotc}{{\:\raisebox{1pt}{\text{\circle*{1.5}}}}}

 \newcommand{\Ba}{\begin{array}}
 \newcommand{\Ea}{\end{array}}
\usepackage[all]{xy}
\usepackage{tikz}
 \tikzstyle{int}=[circle, draw,fill=black,outer sep=0,minimum size=3pt, inner sep=0]
  \tikzstyle{ext}=[circle, draw=black,outer sep=0,inner sep=1pt]
\input{diag}
\tikzset{snakeit/.style={decorate, decoration={snake, amplitude=.2mm,segment length=1mm}}}

\tikzset{ext/.style={circle, draw,inner sep=1pt}, int/.style={circle,draw,fill,inner sep=2pt},nil/.style={inner sep=1pt}}
\tikzset{cy/.style={circle,draw,fill,inner sep=2pt},scy/.style={circle,draw,inner sep=2pt},scyx/.style={draw,cross out,inner sep=2pt},scyt/.style={draw,regular polygon,regular polygon sides=3,inner sep=0.95pt}}
\tikzset{exte/.style={circle, draw,inner sep=3pt},inte/.style={circle,draw,fill,inner sep=3pt}}
\tikzset{diagram/.style={matrix of math nodes, row sep=3em, column sep=2.5em, text height=1.5ex, text depth=0.25ex}}
\tikzset{diagram2/.style={matrix of math nodes, row sep=0.5em, column sep=0.5em, text height=1.5ex, text depth=0.25ex}}
\tikzset{rowcolsep/.style={column sep=.2cm, row sep=.1cm}}

\tikzset{
  crossed/.style={
    decoration={markings,mark=at position .5 with {\arrow{|}}},
    postaction={decorate},
    shorten >=0.4pt}}

\tikzset{every picture/.style={baseline=-.65ex} }

\begin{document}

\author{Alexey Kalugin} 
\address{Max Planck Institut für Mathematik in den Naturwissenschaften, Inselstraße 22, 04103 Leipzig, Germany}
\email{alexey.kalugin@mis.mpg.de}
\title{On the Caldararu and Willwacher conjectures}
\maketitle
\begin{abstract} In the present paper, we study a relation between the cohomology of moduli stacks of smooth and proper curves $\mathcal M_{g,n}$ and the cohomology of ribbon graph complexes. The main results of this work are proofs of T. Willwacher's conjecture and A. C$\check{\mathrm a}$ld$\check{\mathrm a}$raru's conjecture about the cohomology of the Bridgeland differential. We also discuss the relation to string topology and the Chan-Galatius-Payne theorem about the weight zero part of the compactly supported cohomology of $\mathcal M_{g,n}.$ 
\end{abstract} \tableofcontents

\section*{Introduction} 

\subsection{Introduction} In this paper, we study the relationship between the cohomology of moduli stacks of smooth proper algebraic curves with marked points $\mathcal M_{g,n}$ and the cohomology of ribbon graph complexes. For an integer $d\in \mathbb Z$ we denote by $\textsf {RGC}_d^{\hdot}(\delta)$ the \textit{Kontsevich-Penner ribbon graph complex} \cite{Kon1} \cite{Kon2} \cite{Kon3} \cite{Pen} \cite{MW}. This is a combinatorial cochain complex with entries being ribbon graphs (also called fat graphs) i.e. graphs equipped with cyclic order on a set of the half-edges incident to each vertex and a choice of a certain orientation (for details, see \cite{MW}). The differential acts by splitting a vertex in all possible ways which preserve a cyclic order.
\par\medskip 
 $$\Gamma_1={\xy
(0,-2)*{\overset{\bullet}{}}="A";
(0,-2)*{\overset{\bullet}{}}="B";
"A"; "B" **\crv{(6,6) & (-6,6)};
\endxy},
 \qquad \Gamma_2=\xy
(0,-2)*{\bullet}="A";
(0,-2)*{\bullet}="B";
"A"; "B" **\crv{(-4,6) & (10,6)};
"A"; "B" **\crv{(-10,6) & (4,6)};
\endxy
,\qquad 
\Gamma_3=
\hspace{-3mm}\Ba{c}\mbox{\xy
(0,-2)*{\bullet}="A";
(0,-2)*{\bullet}="B";
"A"; "B" **\crv{(6,6) & (-6,6)};
"A"; "B" **\crv{(6,-10) & (-6,-10)};
\endxy}\Ea \hspace{-3mm}
$$
{\centering Examples of ribbon graphs.
\par\medskip }
According to the fundamental results of D. Mumford, R. C. Penner, J. Harer, and W. Thurston \cite{Mum}  \cite{Pen} \cite{Harer} there is a quasi-isomorphism of complexes:\footnote{Here $\epsilon_n$ is a one-dimensional sign local system.}
\begin{equation}\label{fund1}
\textsf {RGC}_d^{\hdot}(\delta)_{\geq 3}\cong \prod_{g\geq 0, n\geq 1, 2g+n-2>0}^{\infty} C_c^{\hdot+2dg-n}(\mathcal M_{g,n}/\Sigma_n,\epsilon_n).
\end{equation}
This result comes from considering a topological stack of \textit{metric} ribbon graphs $M_{g,n}^{rib}$ and constructing a homeomorphism $M_{g,n}^{rib}\cong \mathcal M_{g,n}\times \mathbb R^n_{>0}.$
The description of the cohomology of moduli stacks of curves in ribbon graphs plays a crucial role in the study of $\mathcal M_{g,n}.$ In particular, J. Harer and D. Zagier \cite{HZ} calculated the Euler characteristic of moduli stacks of curves and M. Kontsevich \cite{Kon3} proved E. Witten's conjecture. 

\subsection{Main results} The first main result of this paper is a proof of the following Theorem conjectured by T. Willwacher \cite{WillL}:
\begin{Th1}[T. Willwacher's conjecture] For every $g\geq0$ and $n\geq 1$ such that $2g+n-2>0,$ the following square commutes (in the derived category of vector spaces):

\begin{equation*}
\begin{diagram}[height=3.1em,width=3.4em]
\prod_{g,n}^{\infty} C_c^{\hdot+2dg-n}(\mathcal M_{g,n}/ \Sigma_{n},\epsilon_n) & &  \rTo_{}^{\nabla_1} &  &   \prod_{g,n}^{\infty}C_{c}^{\hdot+2dg-n}(\mathcal M_{g,n+1}/ \Sigma_{n+1},\epsilon_{n+1}) &  \\
\dTo_{\sim}^{} & & &  & \dTo_ {\sim}^{}  && \\
\textsf{RGC}_{d}^{\hdot}(\delta)_{\geq 3} & &  \rTo_{}^{\Delta_1} &  & \textsf{RGC}_{d}^{\hdot+1}(\delta)_{\geq 3}   \\
\end{diagram}
\end{equation*}
\end{Th1}
Where $\nabla_1$ is the so-called \textit{Willwacher differential} introduced in \cite{AK} which is defined\footnote{This morphism exists at the level of \textit{derived categories} due to non-properness of the forgetful morphism.} as a pullback along the forgetful morphism:
$$
\pi\colon \mathcal M_{g,n+1}\longrightarrow \mathcal M_{g,n},
$$
and $\Delta_1$ is the so-called \textit{Bridgeland differential} introduced by T. Bridgeland at the beginning of $00'$s in his studies of mirror symmetry \cite{Brid}. The differential $\Delta_1$ is defined by adding an edge to a ribbon graph, splitting a boundary in two:\footnote{More precisely, we connect \textit{different corners} attached to the same boundary i.e. intervals on "blown-up" vertices of a ribbon graph lying between half-edges \cite{MW}.} 
$$
\Delta_1\colon 
{\xy
(5,5)*{\overset{\bullet}{}}="A";
(2,5)*{\overset{\bullet}{}}="B";
(-1,1)*{\overset{\bullet}{}}="C";
(8,1)*{\overset{\bullet}{}}="D";
(2,-3)*{\overset{\bullet}{}}="F";
(5,-3)*{\overset{\bullet}{}}="E";
\ar @{-} "B";"C" <0pt>
\ar @{-} "A";"B" <0pt>
\ar @{-} "A";"D" <0pt>
\ar @{-} "C";"F" <0pt>
\ar @{-} "F";"E" <0pt>
\ar @{-} "E";"D"
 <0pt>\endxy}
\longmapsto \sum 
{\xy
(5,5)*{\overset{\bullet}{}}="A";
(2,5)*{\overset{\bullet}{}}="B";
(-1,1)*{\overset{\bullet}{}}="C";
(8,1)*{\overset{\bullet}{}}="D";
(2,-3)*{\overset{\bullet}{}}="F";
(5,-3)*{\overset{\bullet}{}}="E";
\ar @{-} "B";"C" <0pt>
\ar @{-} "A";"B" <0pt>
\ar @{-} "A";"D" <0pt>
\ar @{-} "C";"F" <0pt>
\ar @{-} "F";"E" <0pt>
\ar @{-} "E";"D"<0pt>
\ar @{-} "F";"A"<0pt>
\endxy}
$$
Note that the Bridgeland differential has a combinatorial origin and naturally comes as the "dual" of the standard differential $\delta$ i.e. there is an involution on ribbon graphs defined by interchanging vertices with boundaries \cite{CFL}, under this duality $\Delta_1$ goes to $\delta.$ \textit{Per contra} the Willwacher differential has "motivic" origins i.e. $\nabla_1$ preserves associated graded quotients of the weight filtration on $H_c^{\hdot}(\mathcal M_{g,n}/ \Sigma_n,\epsilon_{n})$ \cite{AK}. 
\par\medskip 
Recall that the \textit{Merkulov-Willwacher ribbon graph complex} $\textsf {RGC}_d^{\hdot}(\delta+\Delta_1)$ is a version of the Kontsevich-Penner ribbon graph complex where the differential is twisted by the Bridgeland differential \cite{MW}. In \textit{ibid.} the \textit{properad of ribbon graphs} $\textsf{RGra}_d$ was introduced as collections of ribbon graphs together with a certain composition. Remarkably this properad is equipped with a morphism from a properad $\Lambda \textsf{LieB}_{d,d}$ which controls Lie bialgebras in the sense of V. Drinfeld \cite{Drin}. Using the theory of deformation complexes of properads we set:
$$
\textsf {RGC}_d^{\hdot}(\delta+\Delta_1):=\mathrm {Def}(\Lambda \textsf{LieB}_{d,d}\longrightarrow \textsf{RGra}_d).
$$
Applying Theorem $1$ and Theorem $4.4.5$ from \cite{AK} we compute the cohomology of the Merkulov-Willwacher ribbon graph complex and obtain the proof of the following conjecture \cite{Cal1} \cite{AWZ}: 
\begin{Th1}[A. C$\check{\mathrm a}$ld$\check{\mathrm a}$raru's conjecture] For every $d\in \mathbb Z$ the Merkulov-Willwacher ribbon graph complex $\textsf{RGC}_d^{\hdot}(\delta+\Delta_1)$ is quasi-isomorphic to the totality of the shifted cohomology with compact support of moduli stacks $\mathcal M_g:$\footnote{Here by a sign $\cong_{\geq 2}$ we indicate a quasi-isomorphism in genus $g\geq 2,$ however we also compute the cohomology of the Merkulov-Willwacher complex in genus one and zero (Proposition \ref{CalC1}).} 
$$
\textsf{RGC}_d^{\hdot}(\delta+\Delta_1)\cong_{\geq 2} \prod_{g\geq 2}^{\infty} C^{\hdot+2dg-1}_c(\mathcal M_g,\mathbb Q).
$$
\end{Th1}
Put another way A. C$\check{\mathrm a}$ld$\check{\mathrm a}$raru's conjecture extends quasi-isomorphism \eqref{fund1} to the non-punctured case by introducing the combinatorial model for the compactly supported cohomology of $\mathcal M_g$ for $g\geq 2.$ 
\par\medskip 

Denote by $\textsf {GC}_d^2$ the \textit{Kontsevich graph complex} \cite{Kon1}\cite{Kon2}\cite{Will} i.e. the combinatorial cochain complex with entries being at least two-valent graphs with an orientation (see \cite{Will} for the precise definitions) and a differential acts by splitting a vertex in all possible ways. 
\begin{Th1} For every $d\in \mathbb Z$ we have the canonical injection:
$$
H^{\hdot}(\textsf {GC}_{2d}^2)\longrightarrow H^{\hdot+1}(\textsf{RGC}_d(\delta+\Delta_1))
$$
\end{Th1}
This result immediately follows from the M. Chan S. Galatius and S. Payne's description of the weight zero part of the compactly supported cohomology of $\mathcal M_g$ \cite{CGP1} (see also \cite{AWZ},\cite{AK}) together with Theorem $2$ and explicit computations in genus one. 

\subsection{Techniques} The main idea in the proof of Theorem $1$ is straightforward: we show that both differentials $\Delta_1$ and $\nabla_1$ are induced by the pullback along the morphism on moduli spaces of bordered surfaces which forgets a "complex marking". A crucial role in this construction is played by moduli spaces of stable bordered surfaces in the sense of M. Liu and K. Costello \cite{Liu} \cite{Cost}. 
\par\medskip 
Let $g\geq 0$ and $n\geq 1$ such that $2g+n-2>0,$ recall that \textit{Costello's moduli space} $\overline{\mathcal N}_{g,n}$ is a moduli space of stable bordered surfaces of genus $g$ with $n$-boundary components such that we only allow nodes on the boundary. This moduli space is a non-compact but smooth orbifold with corners. Analogous to the Deligne-Mumford moduli stacks, these moduli spaces have versions with marked points. In the world of bordered surfaces, we have different versions of marked points depending on where we allow them: on a boundary or (and) in an interior of the surface. Following \textit{ibid.} for $4g+2n+p+2q-4>0,$ we have a moduli space $\overline{\mathcal N}_{g,n,p,q}$ of stable bordered surfaces with singularities only on a boundary and $p$ points on a boundary (real points) and $q$ points in an interior of the surface (complex points). We have morphisms:
$$
\pi^{real}_{p}\colon \overline{\mathcal N}_{g,n,p+1,q}\longrightarrow \overline{\mathcal N}_{g,n,p,q},\quad \pi^{comp}_q\colon \overline{\mathcal N}_{g,n,p,q+1}\longrightarrow \overline{\mathcal N}_{g,n,p,q},
$$
which forget real or complex marking and stabilise the resulting bordered surface. Following \textit{ibid.} we have a diagram of weak homotopy equivalences (the \textit{Costello homotopy equivalence}):
$$
v\colon \mathcal N_{g,n} \longrightarrow \overline{\mathcal N}_{g,n}\longleftarrow D_{g,n}\colon u.
$$
Where $\mathcal N_{g,n}$ is a locus of smooth bordered surfaces i.e. an interior of $\overline{\mathcal N}_{g,n}$ and $D_{g,n}$ is a locus of bordered surfaces with irreducible components of genus $0$ (the maximally degenerated locus). Note that analogous to the Deligne-Mumford moduli stacks $\overline{\mathcal M}_{g,n}$ the topological stack $D_{g,n}$ can be stratified by dual graphs of surfaces. A natural orientation of disks produces a cyclic order on connected components of strata. Hence applying the Cousin resolution one can show that the chain complex which computes the homology of $D_{g,n}$ with coefficients in the sign local system can be identified with a direct summand of the Kontsevich-Penner ribbon graph complex (see \cite{Cost} and Proposition \ref{expr}). 
\par\medskip 
The Costello moduli space $\overline{\mathcal N}_{g,n}$ is an open suborbifold in the \textit{Liu moduli space} $\overline{\mathcal N}_{g,n}^L$ \cite{Liu}. The latter moduli space consists of stable bordered surfaces of genus $g$ with $n$ (non-labelled) boundary components where we allow not only nodes on the boundary of the surface but also nodes in the interior and shrinking lengths of boundaries to zero. The moduli space $\overline{\mathcal N}_{g,n}^L$ is a smooth and compact orbifold with corners of dimension $6g-6+3n.$ Our first result is the following:

\begin{Th1}\label{th4} We have a diagram of weak homotopy equivalences of topological stacks:\footnote{This result is established with the help of $CAT(0)$-properties of the Weil-Petersson geometry of the moduli space $\overline{\mathcal N}_{g,n}^L.$} 
$$
a\colon \mathcal M_{g,n}/ \Sigma_n\longrightarrow \widetilde{\mathcal N}_{g,n}\longleftarrow \mathcal N_{g,n}\colon b.
$$
\end{Th1}
Where $\widetilde{\mathcal N}_{g,n}$ is an open smooth substack of $\overline{\mathcal N}_{g,n}^L,$ which consists of stable bordered surfaces with singularities given by shrinking lengths of boundaries to zero (cusps). And  $\mathcal M_{g,n}/ \Sigma_n$ is identified (topologically) with the maximal degenerated locus in $\widetilde{\mathcal N}_{g,n}^L$ by replacing cusps with marked points. Theorem $\ref{th4}$ allows us to define the so-called \textit{boundary shrinking morphism}:\footnote{We believe that this construction should be well known to experts, however, we were unable to find a reference (see \cite{AGOL}).} 
$$
\rho_{\mathcal M_{g,n}/\Sigma_n}:=a_!\circ b_!^{-1}\colon C_c^{\hdot}(\mathcal N_{g,n},\mathbb Q) \overset{\sim}{\longrightarrow} C_c^{\hdot-n}(\mathcal M_{g,n}/\Sigma_n,\epsilon_n).$$
Note that the morphism $\rho_{\mathcal M_{g,n}/\Sigma_n}$ exists at the level of complexes and can be identified with the Gysin pushforward along the "topological" boundary shrinking morphism, which is defined as the Weil-Petersson closest point (see Remark \ref{topbsh}).

Building on K. Costello's homotopy equivalence, we explicitly relate the homology of $D_{g,n}$ to the compactly supported cohomology of $\mathcal M_{g,n}/\Sigma_n:$ 
\begin{align*}
\textsf {RGC}_d^{\hdot}(\delta)\overset{u_*}{\longrightarrow} \prod_{g,n}^{\infty}  C_{6g-6+3n-2dg-\hdotc}&( \overline{\mathcal N}_{g,n},\epsilon_n)\\
\overset{v_*}{\overset{\sim}{\longleftarrow}}  \prod_{g,n}^{\infty}  C_{6g-6+3n-2dg-\hdotc}({\mathcal N}_{g,n}&,\epsilon_n)\overset{\mathbb {D}}{\overset{\sim}{\longleftarrow}}  \prod_{g,n}^{\infty}  C^{\hdot+2dg}_c({\mathcal N}_{g,n},\mathbb Q)\\
\overset{\rho_{\mathcal M_{g,n}/\Sigma_n}}{\overset{\sim}{\longrightarrow}}   &\prod_{g,n}^{\infty} C_c^{\hdot+2dg-n}(\mathcal M_{g,n}/\Sigma_n,\epsilon_n)
\end{align*}
This result gives an alternative proof of fundamental quasi-isomorphism \eqref{fund1} and has an independent interest (see \cite{Mer}) . 
\par\medskip 
We define the \textit{geometric Bridgeland differential} $\Delta_1^{bor}$ as the following composition: we take a Gysin pullback along the proper morphism, which forgets the marking in the interior (induced by $\pi^{comp}$):
$$
\pi_{D_{g,n,0,1}}\colon D_{g,n,0,1}\longrightarrow D_{g,n},
$$
and then we take a pushforward along the inclusion to $K_{g,n+1}^L.$ The latter space is defined as the closure (in Liu's moduli space) of the locus of nodal surfaces of the following type:
\begin{enumerate}[(a)]
\item All irreducible components are disks;
\item All irreducible components are disks except the unique one, which is an annulus.
\end{enumerate} 
\par\medskip 
The canonical inclusion $D_{g,n+1}\rightarrow K_{g,n+1}^L$ is a weak-homotopy equivalence (Lemma \ref{B0}) and we have a morphism $\xi_{D_{g,n,0,1}}\colon D_{g,n,0,1}\rightarrow K_{g,n+1}^L,$ defined by replacing a marked point with a cusp and hence we define the geometric Bridgeland differential $\Delta_1^{bor}$ as the following zigzag morphism:
\begin{equation*}\label{}
\begin{diagram}[height=2.7em,width=4.3em]
&& C_{\hdotc+2}(D_{g,n,0,1},\xi_{D_{g,n,0,1}}^*\epsilon_{n+1})   && \\
& \ldTo^{\sim} & & \rdTo^{\xi_{D_{g,n,0,1}*}} & \\ 
 C_{\hdotc}(D_{g,n,0,1},\pi^!_{D_{g,n,0,1}}\epsilon_n)   &  &    & & C_{\hdotc+2}(K_{g,n+1}^L,\epsilon_{n+1}) &  \\
\uTo_{\pi^!_{D_{g,n,0,1}}}^{}  & &  &  &  \uTo_ {\sim}^{n_*}  && \\
 C_{\hdotc}(D_{g,n},\epsilon_n) &    & \rDotsto^{\Delta_1^{bor}} &   &  C_{\hdotc+2}(D_{g,n+1},\epsilon_{n+1})    \\
\end{diagram}
\end{equation*}
To relate the morphism $\Delta_1^{bor}$ to the Bridgeland differential $\Delta_1$ we have to introduce the intermediate operator $\Delta_1^{bor'}.$ Denote by $D_{g,n,2}'\subset D_{g,n,0,2}$ the locus which consists of stable bordered surfaces where two (non-labelled) markings belong to the same boundary. We have morphisms:
$$
\pi_{D_{g,n,2}'}\colon D_{g,n,2}'\longrightarrow D_{g,n},\qquad \xi_{D_{g,n,2}'}\colon D_{g,n,2}'\longrightarrow D_{g,n+1}
$$
This first morphism is induced by the morphism $\pi^{real}$ and hence is given by forgetting marked points and stabilising the resulting nodal curve. The second morphism is a version of a clutching morphism for bordered surfaces i.e. we connect two marked points by a node (cf. \cite{KN}). We set:
\begin{equation*}\label{}
\begin{diagram}[height=2.7em,width=4.2em]
 C_{\hdotc}^{}(D_{g,n,2}',\pi_{D_{g,n,2}'}^!\epsilon_{n})  &  & \lTo_{\sim}^{}   & & C_{\hdotc+2}^{}(D_{g,n,2}',\xi_{D_{g,n,2}'}^*\epsilon_{n+1}) &  \\
\uTo_{\pi^!_{D_{g,n,2}'}}^{}  & &  &  &  \dTo_{}^{\xi_{D_{g,n,2}'*}}  && \\
 C_{\hdotc}(D_{g,n},\epsilon_n) &    & \rDotsto^{\Delta^{bor'}_1} &   &  C_{\hdotc+2}(D_{g,n+1},\epsilon_{n+1})    \\
\end{diagram}
\end{equation*}
Applying the explicit computations of the Gysin pullback we prove (Proposition \ref{KeyL4} and Proposition \ref{KeyL3}) that we have the following decomposition:\footnote{The reason for this is that a possible degeneration of a surface having an annulus as an irreducible component is given by contracting arcs with endpoints on two boundaries of an annulus, this leads to the morphism which connects \textit{identical} corners by an edge (see \cite{DHV}).}
$$
\Delta_1^{bor'}=\Delta_1^{bor}+\Delta_1
$$
Finally, we show (Proposition \ref{KeyL3}) that the morphism $\Delta_1^{bor'}$ vanishes, and hence the geometric Bridgeland differential coincides with $\Delta_1.$ Hence we get the following result which is \textit{le cœur} of the present work: 
\begin{Th1} For $g\geq 0,n\geq 1$ such that $2g+n-2>0,$ the following diagram commutes (in the derived category of vector spaces):
\begin{equation*}
\begin{diagram}[height=3.1em,width=3.3em]
 \prod_{g,n}^{\infty} C_{\hdotc+\dim \mathcal N_{g,n}-2dg}(D_{g,n},\epsilon_n) & &  \rTo_{}^{\Delta_1^{bor}} &  &   \prod_{g,n}^{\infty} C_{\hdotc+\dim \mathcal N_{g,n+1}-2dg-2}(D_{g,n+1},\epsilon_{n+1}) &  \\
\uTo_{\sim}^{} & & &  & \uTo_ {\sim}^{}  && \\
\textsf{RGC}^{\hdot}_d(\delta)_{\geq 3} & &  \rTo_{}^{\Delta_1} &  & \textsf{RGC}^{\hdot+1}_d(\delta)_{\geq 3}    \\
\end{diagram}
\end{equation*}
\end{Th1}
The geometric Bridgeland differential admits an obvious version for the moduli space of smooth bordered surfaces: we take the Gysin pullback along the (proper) morphism which forgets a marking in the interior:
$$
\pi_{\overline{\mathcal N}_{g,n,0,1}}\colon \overline{\mathcal N}_{g,n,0,1}\longrightarrow \mathcal N_{g,n}.
$$
Further, we take a push-forward to the moduli space of bordered surfaces ${\mathcal N}_{g,n+1}^L .$ The latter is a moduli space, defined as a subspace in Lui's moduli space where we do not allow singularities in the interior. The canonical inclusion $\overline{\mathcal N}_{g,n}\rightarrow {\mathcal N}_{g,n}^L$ is a homotopy equivalence. Hence we define the morphism $\nabla_1^{bor}$ as the following zig-zag morphism:
\begin{equation*}\label{}
\begin{diagram}[height=2.8em,width=3.7em]
&& C_{\hdotc+2}(\overline{\mathcal N}_{g,n,0,1},\xi^*_{\overline{\mathcal N}_{g,n,0,1}}\epsilon_{n+1})   && \\
& \ldTo_{\sim}^{} & & \rdTo^{\xi_{\overline{\mathcal N}_{g,n,0,1}*}} & \\ 
  C_{\hdotc}(\overline{\mathcal N}_{g,n,0,1},\pi^!_{\overline{\mathcal N}_{g,n,0,1}}\epsilon_n)  &  &    & & C_{\hdotc+2}({\mathcal N}_{g,n+1}^L,\epsilon_{n+1}) &  \\
\uTo_{\pi^!_{\overline{\mathcal N}_{g,n,0,1}}}^{}  & &  &  &  \uTo_ {\sim}^{q_*}  && \\
C_{\hdotc}(\overline{\mathcal N}_{g,n},\epsilon_n)  &    & &   &   C_{\hdotc+2}(\overline{\mathcal N}_{g,n+1},\epsilon_{n+1})   \\
\uTo^{v_*}_{\sim}  & &  &  &  \uTo_ {\sim}^{v_*}  && \\
C_{\hdotc}({\mathcal N}_{g,n},\epsilon_n)   &    & \rDotsto^{\nabla_1^{bor}} &   &    C_{\hdotc+2}({\mathcal N}_{g,n+1},\epsilon_{n+1})    \\\end{diagram}
\end{equation*}
The formal check (Proposition \ref{comp}) shows that under the Costello homotopy equivalence morphisms $\Delta_1^{bor}$ and $\nabla_1^{bor}$ coincide. Finally, the proof of T. Willwacher's conjecture follows from the following comparison result:

\begin{Th1} For $g\geq 0,n\geq 1$ such that $2g+n-2>0,$ the following diagram commutes (in the derived category of vector spaces):

\begin{equation*}
\begin{diagram}[height=3.1em,width=3.7em]
 C_c^{\hdot-n} (\mathcal M_{g,n}/\Sigma_{n},\epsilon_n) & & \rTo^{\nabla_1}  &  &   C_c^{\hdot-n-1} (\mathcal M_{g,n+1}/\Sigma_{n+1},\epsilon_{n+1}) &\\
 \uTo^{\rho_{\mathcal M_{g,n}/\Sigma_{n}}}_{\sim} & &  &  & \uTo_{\rho_{\mathcal M_{g,n+1}/\Sigma_{n+1}}}^{\sim}  &    \\
C_c^{\hdot}(\mathcal N_{g,n},\mathbb Q) &  & \rTo^{\mathbb D\circ \nabla_1^{bor}\circ \mathbb D} & & C_c^{\hdot+1}(\mathcal N_{g,n+1},\mathbb Q)
\end{diagram}
\end{equation*}

\end{Th1}

This Theorem compares the Willwacher differential $\nabla_1$ and the differential $\nabla^{bor}$ under the homotopy equivalence from Theorem $4.$ More precisely, we show that both differentials are induced by the forgetful morphism $\pi^{comp}\colon \overline{\mathcal N}_{g,n,0,1}^L\longrightarrow \overline{\mathcal N}_{g,n}^L$ on Liu's moduli spaces of stable bordered surfaces.  

\subsection{Relation to string topology} Our main results can be interpreted from the point of view of string topology. 
\par\medskip
Let $M$ be a smooth and oriented manifold by $M^{S^1}$ we denote the space of free loops with values in $M.$ According to M. Chas and D. Sullivan \cite{CS} $H_{\hdotc}(M^{S^1})$ carries a string product and the BV-operator coming from the circle action. If one passes to the $S^1$-equivariant homology \cite{CS1} the graded space of the relative homology $H_{\hdotc}(M^{S^1},M)$ has a shifted Lie bialgebra structure, extending results of V. Turaev \cite{Tur}. This structure has a purely algebraic (formal) counterpart, in \cite{CEL} the Lie bialgebra structure was constructed on the cyclic homology $CC_{\hdotc}(A^*[-1])$ of any Poincaré duality algebra $A$, giving the reduced part of the Chas-Sullivan structure in the case when $M$ is simply connected and closed. 

In \cite{MW} it was proved that for DG-algebra $A$ of with Poincaré duality of degree $d$ the complex $CC_{\hdotc}(A^*[-1])$  carries a natural structure of an algebra over $\textsf {RGra}_{3-d}.$ 
It is a natural question \cite{Merkul3} whenever this operation can be extended to the "non-formal" setting.\footnote{An idea of the existence of "Segal type structure" is not new and goes at least to the fundamental work \cite{CS1} (see also \cite{CG1}).} Following K. Costello we consider a bordered nodal surface $\Sigma\in D_{g,n},$ where we interpret boundaries of irreducible components of the normalisation as "inputs" and boundaries of the oriented real blow-up as "outputs":
\begin{equation*}
\begin{diagram}[height=1.9em,width=2.3em]
 \Sigma &   \lTo^{}  &   \mathrm {Blo}_{sing}\Sigma\\
\uTo & &  & \\
\overline{\Sigma}  &       \\
\end{diagram}
\end{equation*}
Applying appropriate pull-push operators, one should get operations parametrised by ribbon graphs. This structure is consistent with M. Chas and D. Sullivan Lie bialgebra structure. From this point of view, the Lie bracket and the Lie cobracket correspond to the following nodal surfaces (cf. \cite{CG1}):  
$$
\xy
 (0,0)*{
\xycircle(3,3){.}};
(12,0)*{
\xycircle(3,3){.}};
 (3,0)*{}="a",
(9,0)*{}="b",
\ar @{-} "a";"b" <0pt>
\endxy \ \ \ \ \ \ \ \
\xy
 (0,0)*{
\xycircle(3,3){.}};
(-3,0)*{}="1";
(3,0)*{}="2";
"1";"2" **\crv{(-6,10) & (6,10)};
\endxy
$$
By Theorem $2,$ every deformation of such structure is given by the compactly supported cohomology of $\mathcal M_g,$ such that the deformations of the underlying Lie bialgebra will be controlled by the weight zero quotient of $\mathcal M_g$ following Theorem $3$ (modulo Conjecture $24$ in \cite{AWZ}). Note that in \cite{AKKN} it was proved that $\textsf {GC}^{\hdot}_d$ plays a crucial role in another extension of V. Turaev's Lie bialgebra.

\subsection{Structure of the paper} In Section $2$ we recollect some facts about the (Borel-Moore) homology and operations on the (Borel-Moore) chains which we will need. In Section $3$ we recall basic constructions for ribbon graphs and definitions of the Merkulov-Willwacher ribbon graph complex and the Bridgeland differential. Section $4$ is devoted to recalling definitions of the Costello-Liu moduli spaces of stable bordered surfaces as well as Costello's result about the homotopy equivalence between $\mathcal N_{g,n}$ and $D_{g,n}.$ In this section, we prove our Theorem $4.$ Section $5$ is the heart of the present work, we make all necessary definitions and give proofs of Theorem $5$ and Theorem $6.$ Further, we get the proof of T. Willwacher's conjecture and deduce A. C$\check{\mathrm a}$ld$\check{\mathrm a}$raru's conjecture and Theorem $3.$ The last section (Appendix) is devoted to recollecting some facts about the Teichmüller space of a bordered surface and Weil-Petersson geometry of this space. 

\subsection{Acknowledgments} This work is an expanded and revised part of my Ph.D. thesis \cite{AK3}. I thank Noémie Combe, Nikita Markarian, Sergei Merkulov, Artem Prikhodko, and Sergey Shadrin for valuable comments and discussions. I also would like to deeply thank Thomas Willwacher for communicating the conjectures above and for fruitful discussions. This work was partially supported by the FNR project number: PRIDE $15/10949314/$GSM and the Max Planck Institute for Mathematics in Sciences.

\section{Preliminaries}

\subsection{Notation}

For a natural number $n\in \mathbb N_+$ we will denote by $[n]$ a finite set such that $[n]:=\{1,2,\dots,n\}.$ Let $I$ be a finite set, by $\mathrm {Aut}(I)$ we will denote the group of automorphisms of this set, in the case when $I=[n]$ we will use a notation $\Sigma_n:=\mathrm {Aut}([n])$ for a symmetric group on $n$-letters. We usually work over the field of rational number $\mathbb Q.$ For a $\mathbb Q$-linear representation $V$ of a finite group $G$ we will denote by $V^G$ (resp. $V_G$) the space of $G$-invariants (resp. $G$-coinvariants). Since the characteristic of a field $\mathbb Q$ is zero, the canonical morphism
$V_G\longrightarrow V^G$ is an isomorphism, and hence we will freely switch between invariants and coinvariants. For a finite $S$ we will denote by $\det(S):=\bigwedge^{\dim V\langle S\rangle} V\langle S\rangle$ the determinant on the free $\mathbb Q$-vector space $V\langle S\rangle$ generated by a set $S.$ 
\par\medskip 
For any $g\geq 0,$ and $n\geq 0$ we have a moduli stack $\mathcal M_{g,n}$ which parametrises smooth proper algebraic curves of genus $g$ with $n$ (labelled) marked points. This is a smooth Artin stack of dimension $3g-3+n$ over $\mathbb Z.$ When $2g+n-2>0$ this is a smooth Deligne-Mumford stack. For $2g+n-2>0$ according to \cite{DM} there exists a compactification $\overline {\mathcal M}_{g,n}$ of $\mathcal M_{g,n}$ which is defined by adding all stable nodal curves. $\overline {\mathcal M}_{g,n}$ is a smooth and proper Deligne-Mumford stack over $\mathbb Z.$ The complement to a smooth locus is a divisor with normal crossings. By an abuse of notation we will identify the moduli stack with corresponding coarse moduli spaces. 

\subsection{Homology and Borel-Moore homology} For a locally compact topological space $X$ we will use a notation $D(X)$ for the derived category of sheaves of $\mathbb Q$-vector spaces on $X.$ It is well known that the formalism of six functors exists for the category $D(X).$ By $\omega_X$ we denote the \textit{dualising complex on $X$} i.e. an object in the derived category of sheaves $D(X),$ defined by the rule:
$$
\omega_X:=a_X^!\mathbb Q,\qquad a_X\colon X\longrightarrow *
$$
In particular, when $X$ is a manifold (possible with a corners) of finite dimension we have an isomorphism of sheaves $\omega_X\cong or_X[\dim X],$ here $or_X$ is the sheaf of orientations on $X.$ Note that if $X$ is also an orientable manifold, we have an isomorphism of sheaves $or_X\cong \underline{\mathbb Q}_X.$ In the case when $X$ is a manifold with corners the orientation sheaf $or_X$ is isomorphic to the $!$-extension of the orientation sheaf of the interior $Int(X)$ of $X$ i.e. $or_X\cong i_! or_{Int(X)},$ $i\colon Int(X)\rightarrow X.$ We will use a notation:
$$
\mathbb D\colon D(X)\longrightarrow D(X)^{op},\qquad \mathbb D:=\mathbf R{\EuScript H\mathrm {om}}(\,\,\,,\omega_X)
$$
for the \textit{Verdier duality functor} defined as the internal hom functor with values in the dualising sheaf. Let $\mathcal L$ be a local system on $X$ i.e. a locally constant sheaf, then $\mathbb D(\mathcal L)\cong \mathcal L^{\vee}\otimes \omega_X,$ where $\mathcal L^{\vee}$ is a dual local system. Further, we assume that a space $X$ is equipped with a stratification $\EuScript S=\{S_{\alpha}\}$ $i_{\alpha}\colon S_{\alpha}\rightarrow X.$ For a cohomologically $\EuScript S$-constructible sheaf $\mathcal K\in D(X)$ we define the \textit{chains $C_{\hdotc}(X,\mathcal K)$ of $X$ with coefficients in $\mathcal K$} by the rule:
$$
C_{\hdotc}(X,\mathcal K):=\mathbf R^{-\hdot}\Gamma_c(X,\mathbb D(\mathcal K)).
$$
We also define the \textit{Borel-Moore chains $C^{BM}_{\hdotc}(X,\mathcal K)$ of $X$ with coefficients in $\mathcal K$} by the rule:
$$
C^{BM}_{\hdotc}(X,\mathcal K):=\mathbf R^{-\hdot}\Gamma(X,\mathbb D(\mathcal K)).
$$
When $X$ is a compact manifold, the Borel-Moore chains are quasi-isomorphic to the standard chains with coefficients in a sheaf $\mathcal K:$
$$
C^{BM}_{\hdotc}(X,\mathcal K):=\mathbf R^{-\hdot}\Gamma(X,\mathbb D(\mathcal K))\cong \mathbf R^{-\hdot}\Gamma_c(X,(\mathcal K))^*:=C_{\hdotc}(X,\mathcal K). 
$$
In the case when $\mathcal K$ is a constant sheaf $\underline{\mathbb Q}_X$ we recover the usual definition of the chains and the Borel-Moore chains $C^{BM}_{\hdotc}(X,\mathbb Q):=\mathbf R^{-\hdot}\Gamma(X,\omega_X)).$ When $X$ is a finite-dimensional topological manifold (possible with a boundary) we have \textit{Poincaré-Verdier duality} for the chains and the Borel-Moore chains:
$$
\mathbb D\colon C_{\hdotc}(X,\mathbb Q)\cong C^{\dim X-\hdot}_c(X,or_X),\quad \mathbb D\colon C^{BM}_{\hdotc}(X,\mathbb Q)\cong C^{\dim X-\hdot}(X,or_X)
$$
Where $C^{\hdot}_c(X,\mathcal K)$ are the compactly supported cochains with values in $\mathcal K.$ Note that the Borel-Moore chains are dual to the compactly supported chains i.e. we have a morphism:
$$
C^{BM}_{\hdotc}(X,\mathcal K)^*:=\mathbf R^{-\hdot}\Gamma(X,\mathbb D(\mathcal K))^*\longrightarrow \mathbf R^{\hdot}\Gamma_c(X,\mathcal K):=C_c^{\hdot}(X,\mathcal K).$$
The latter morphism is induced by the natural transformation $\mathbb D^2\longrightarrow \mathrm {Id},$ which is an equivalence for a cohomologically constructible complex of sheaves.
\par\medskip 
Following \cite{KaSh} we define the \textit{homological Cousin complex} i.e. the complex which computes the Borel-Moore homology of $X$ with coefficients in $\mathcal K:$

\begin{Prop}\label{Cous1} The DG-vector space $C_{\hdotc}^{BM}(X,\mathcal K)$ is quasi-isomorphic naturally to (the total DG-vector space arising from) the complex:
$$
\bigoplus_{\cdim S_{\alpha}=0}  C_{\hdotc}^{BM}(S_{\alpha},i_{\alpha}^*\mathcal K)\rightarrow \bigoplus_{\cdim S_{\alpha}=1}C_{\hdotc}^{BM}(S_{\alpha},i_{\alpha}^*\mathcal K)\rightarrow\dots
$$
In particular when all strata $S_{\alpha}$ are contractible the DG-vector space  $C_{\hdotc}^{BM}(X,\mathcal K)$ is quasi-isomorphic to:
$$
\bigoplus_{\cdim S_{\alpha}=0}  K_{\alpha}\otimes \textsf {or}_{{\alpha}}\rightarrow \bigoplus_{\cdim S_{\alpha}=1}  K_{\alpha}\otimes \textsf {or}_{{\alpha}}\rightarrow\dots
$$
Here $K_{\alpha}:=H_{\dim S_{\alpha}}(S_{\alpha},\mathcal K)$ and $\textsf {or}_{\alpha}:=H_{\dim S_{\alpha}}^{BM}(S_{\alpha},\mathbb Q)$ is the one-dimensional space of orientations. Note that the sum over strata of codimension $m$ is placed in degree $-m.$
\end{Prop}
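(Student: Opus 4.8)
The plan is to run the classical Cousin-complex machinery of \cite{KaSh} in the homological normalisation used here. Write $\mathcal F:=\mathbb D_X(\mathcal K)$, so that $C^{BM}_\bullet(X,\mathcal K)=\mathbf R^{-\bullet}\Gamma(X,\mathcal F)$; since $\mathcal K$ is $\mathcal S$-constructible and Verdier duality preserves constructibility, $\mathcal F$ is $\mathcal S$-constructible too. For $p\ge 0$ set $X_p:=\bigcup_{\cdim S_\alpha\ge p}S_\alpha$. Because $\{S_\alpha\}$ is a stratification the closure of a stratum is a union of strata of strictly larger codimension, so each $X_p$ is closed, the chain $X=X_0\supseteq X_1\supseteq\cdots$ is finite, and with $U_p:=X\setminus X_p$ one has $U_{p+1}\setminus U_p=Z_p:=\bigsqcup_{\cdim S_\alpha=p}S_\alpha$, a disjoint union of strata (each $S_\alpha$ clopen in $Z_p$) which is closed in $U_{p+1}$. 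First I would pick a functorial flabby (Godement) resolution $\mathcal F\to G^\bullet$, realising $C^{BM}_\bullet(X,\mathcal K)$ as $\Gamma(X,G^\bullet)$, and filter it by the $d$-stable subcomplexes $\Gamma_{X_p}(X,G^\bullet)$ of sections supported on $X_p$. Since flabbiness passes to open restrictions and flabby sheaves are acyclic for the functors $\Gamma_{X_p}(X,-)$ and $\Gamma_{Z_p}(U_{p+1},-)$, a short support bookkeeping — a section of $G^\bullet|_{U_{p+1}}$ supported on $Z_p$ extends to a section of $G^\bullet$ on $X$, which is then automatically supported on $X_p$ — identifies the graded pieces as complexes
$$\operatorname{gr}^p=\Gamma_{X_p}(X,G^\bullet)/\Gamma_{X_{p+1}}(X,G^\bullet)\ \cong\ \Gamma_{Z_p}(U_{p+1},G^\bullet|_{U_{p+1}})\ \simeq\ \mathbf R\Gamma_{Z_p}(U_{p+1},\mathcal F|_{U_{p+1}})\ =\ \bigoplus_{\cdim S_\alpha=p}\mathbf R\Gamma(S_\alpha,i_\alpha^!\mathcal F).$$

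The key input is then the duality identity $i_\alpha^!\mathcal F=i_\alpha^!\mathbb D_X(\mathcal K)\cong\mathbb D_{S_\alpha}(i_\alpha^*\mathcal K)$, which follows from the standard isomorphism $i_\alpha^!\mathbf R{\EuScript H\mathrm{om}}(-,\omega_X)\cong\mathbf R{\EuScript H\mathrm{om}}(i_\alpha^*(-),i_\alpha^!\omega_X)$ together with $i_\alpha^!\omega_X=\omega_{S_\alpha}$; the latter holds because $\omega_{S_\alpha}=a_{S_\alpha}^!\mathbb Q=i_\alpha^!a_X^!\mathbb Q=i_\alpha^!\omega_X$. Hence $\mathbf R^{-\bullet}\Gamma(S_\alpha,i_\alpha^!\mathcal F)=C^{BM}_\bullet(S_\alpha,i_\alpha^*\mathcal K)$, so $C^{BM}_\bullet(X,\mathcal K)$ acquires a natural finite filtration whose $p$-th associated graded is $\bigoplus_{\cdim S_\alpha=p}C^{BM}_\bullet(S_\alpha,i_\alpha^*\mathcal K)$, placed in codimension degree $-p$, and the $d_1$ of the resulting spectral sequence is the connecting morphism of the localisation triangles; assembling these is the homological Cousin complex of the statement. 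To upgrade this from a convergent spectral sequence to a genuine natural quasi-isomorphism onto the total complex I would invoke the Cousin-complex formalism of \cite{KaSh} for precisely the Godement/support model above — one organises the $\underline\Gamma_{X_p}(G^\bullet)$ into a complex of sheaves and checks it resolves $\mathcal F$ — and naturality in $\mathcal K$ together with compatibility with open restriction and with $i_\alpha^*$ is then immediate from functoriality of the Godement resolution.

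For the last assertion, suppose every $S_\alpha$ is contractible, hence a simply connected orientable manifold. Then the locally constant cohomology sheaves of $i_\alpha^*\mathcal K$ are constant, so over $\mathbb Q$ one has $i_\alpha^*\mathcal K\simeq\bigoplus_j\underline{\mathcal H^j(i_\alpha^*\mathcal K)}[-j]$; Poincaré–Verdier duality on the $d_\alpha$-manifold $S_\alpha$ recalled above, combined with contractibility, concentrates $C^{BM}_\bullet(S_\alpha,i_\alpha^*\mathcal K)$ in the single homological degree $d_\alpha=\dim S_\alpha$ and identifies it there with $K_\alpha\otimes\textsf{or}_\alpha$, where $\textsf{or}_\alpha=H^{BM}_{\dim S_\alpha}(S_\alpha,\mathbb Q)$ is one-dimensional. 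Since $\dim S_\alpha=\dim X-\cdim S_\alpha$, consecutive columns of the Cousin complex sit in consecutive homological degrees, which leaves no room for higher differentials or extension problems; the homological Cousin complex therefore degenerates to the honest complex $\bigoplus_{\cdim S_\alpha=0}K_\alpha\otimes\textsf{or}_\alpha\to\bigoplus_{\cdim S_\alpha=1}K_\alpha\otimes\textsf{or}_\alpha\to\cdots$ with the codimension-$m$ summand placed in degree $-m$.

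I expect the genuinely delicate point to be that chain-level upgrade in the general case — producing the natural quasi-isomorphism onto the total complex and not merely the niveau spectral sequence — which is exactly what the Cousin-complex construction in \cite{KaSh} supplies; the remainder is bookkeeping with flabby resolutions, localisation triangles and the adjunction identity $i^!\mathbb D=\mathbb D i^*$. In every place where this proposition is used below the strata are contractible, so the previous paragraph already makes the upgrade automatic.
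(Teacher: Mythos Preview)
Your proposal is correct and follows essentially the same route as the paper: filter $X$ by the closed subsets given by (co)dimension of strata, apply the resulting Postnikov/Cousin tower to $\mathbb D(\mathcal K)$, and identify the graded pieces via $i_\alpha^!\mathbb D\cong\mathbb D\,i_\alpha^*$. The paper's proof is a two-line sketch that simply names the Postnikov tower $\mathbf R(j_{k*})j_k^!\mathbb D(\mathcal K)$ and says ``apply derived sections''; your version is a more explicit chain-level realisation of the same argument via the Godement model and support filtration, and you are more careful than the paper about the upgrade from spectral sequence to honest quasi-isomorphism.
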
 
\begin{proof} Consider the filtration of the space $X:$
$$
X_0\rightarrow X_1\rightarrow\dots \rightarrow X
$$
Here $X_k$ is a union of strata of dimension $k.$ Denote by $Y_k:=X_{k}\setminus X_{k-1}$ with an inclusion $j_k\colon Y_k\rightarrow X.$ With a sheaf $\mathbb D(\mathcal K)$ we can associated the Postnikov tower:
$$
\mathbf R(j_{\dim X*})j_{\dim X}^!\mathbb D(\mathcal K)\longrightarrow \mathbf R(j_{\dim X-1*})j_{\dim X-1}^!\mathbb D(\mathcal K)\longrightarrow \dots \longrightarrow \mathbf R(j_{0*})j_0^!\mathbb D(\mathcal K).
$$
Then applying the derived sections, we get the desired result. 
\end{proof} 

\subsection{Operations on chains and BM-chains} For any morphism $f\colon X\longrightarrow Y$ of locally compact stratified topological spaces and a cohomologically constructible complex of sheaves $\mathcal K$ on $Y$ we define the \textit{pushforward in chains}:
\begin{equation}\label{p1}
f_*\colon C_{\hdotc}(X,f^*\mathcal K)\longrightarrow C_{\hdotc}(Y,\mathcal K)
\end{equation} 
as the derived global sections with compact support of the morphism of sheaves which comes from the following natural transformation of functors:
\begin{equation}\label{adj}
\mathbf R(f_!)f^!\mathbb D\cong \mathbf R(f_!)\mathbb Df^*\longrightarrow \mathbb D
\end{equation}
Further, if a morphism $f\colon X\longrightarrow Y$ is proper, we define the \textit{(proper) pushforward in Borel-Moore chains}:
\begin{equation}\label{bmp1}
f_*\colon C_{\hdotc}^{BM}(X,f^*\mathcal K)\longrightarrow C_{\hdotc}^{BM}(Y,\mathcal K)
\end{equation} 
as the derived global sections of morphism \eqref{adj}. Note that a natural transformation $\mathbf R^{\hdot}\Gamma_c\longrightarrow \mathbf R^{\hdot}\Gamma$ suits in the commutative diagram:
\begin{equation}\label{cc1}
\begin{diagram}[height=2.4em,width=4.9em]
C_{\hdotc}(X,f^*\mathcal K) & \rTo^{f_*} & C_{\hdotc}(Y,\mathcal K)\\
\uTo_{}^{}  &  & \uTo^{}  \\
C_{\hdotc}^{BM}(X,f^*\mathcal K) & \rTo^{f_*} & C_{\hdotc}^{BM}(Y,\mathcal K) \\
\end{diagram}
\end{equation} 
Let $f\colon X\longrightarrow Y$ be an arbitrary morphism of stratified locally compact topological spaces. We define the \textit{Gysin pullback in Borel-Moore chains}:
\begin{equation}\label{bmp2}
f^!\colon C_{\hdotc}^{BM}(Y,\mathcal K)\longrightarrow C_{\hdotc}^{BM}(X,f^!\mathcal K)
\end{equation}
as the derived global sections of the following morphism:
\begin{equation}\label{adj2}
\mathbb D(\mathcal K)\longrightarrow \mathbf R(f_*)f^*(\mathbb D(\mathcal K))\cong \mathbf R(f_*)\mathbb D(f^!\mathcal K)
\end{equation}
In particular, when $j\colon U\rightarrow X$ is an open embedding, we recover a notion of the usual pullback in Bore-Moore chain \cite{CG}. Assume that additionally, the morphism $f$ is proper. We define the \textit{Gysin pullback in chains}:
\begin{equation}\label{bmp2}
f^!\colon C_{\hdotc}(Y,\mathcal K)\longrightarrow C_{\hdotc}(X,f^!\mathcal K)
\end{equation}
as the derived global sections with compact support of morphism \eqref{adj2}. We also have a commutative diagram for Gysin maps:
\begin{equation}\label{cc2}
\begin{diagram}[height=2.4em,width=4.9em]
C_{\hdotc}(Y,\mathcal K) & \rTo^{f^!} & C_{\hdotc}(X,f^!\mathcal K)\\
\uTo_{}^{}  &  & \uTo^{}  \\
C_{\hdotc}^{BM}(Y,\mathcal K) & \rTo^{f^!} & C_{\hdotc}^{BM}(X,f^!\mathcal K) \\
\end{diagram}
\end{equation}

\begin{Lemma}\label{homeqb} Let $f\colon X\longrightarrow Y$ be a morphism of locally compact topological spaces, which is a weak homotopy equivalence and $\mathcal L$ is a local system on $Y.$ The pushforward in chains induces the quasi-isomorphism:
$$
f_*\colon C_{\hdotc}(X,f^*\mathcal L)\overset{\sim}{\longrightarrow} C_{\hdotc}(Y,\mathcal L)
$$ 

\end{Lemma}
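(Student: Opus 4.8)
The plan is to reduce the assertion, via Verdier duality, to the classical weak--homotopy invariance of cohomology with coefficients in a local system. Recall that $C_{\hdotc}(X,\mathcal K)=\mathbf R^{-\hdot}\Gamma_c(X,\mathbb D(\mathcal K))$, and that over the field $\mathbb Q$ a morphism of complexes is a quasi-isomorphism if and only if its dual $\mathbf R\mathrm{Hom}(\,\cdot\,,\mathbb Q)$ is. Using the general identity $\mathbf R\mathrm{Hom}(\mathbf R\Gamma_c(X,\mathcal F),\mathbb Q)\cong \mathbf R\Gamma(X,\mathbb D\mathcal F)$ with $\mathcal F=\mathbb D\mathcal K$, together with $\mathbb D\mathbb D\mathcal K\cong\mathcal K$ for the cohomologically constructible sheaf $\mathcal K=f^*\mathcal L$ (a local system of finite rank, as in all our applications), one obtains natural quasi-isomorphisms $C_{\hdotc}(X,f^*\mathcal L)^{\vee}\simeq \mathbf R\Gamma(X,f^*\mathcal L)$ and $C_{\hdotc}(Y,\mathcal L)^{\vee}\simeq \mathbf R\Gamma(Y,\mathcal L)$. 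First I would check --- this is routine adjunction bookkeeping --- that the pushforward $f_*$, assembled from the counit $\mathbf R(f_!)f^!\mathbb D\cong\mathbf R(f_!)\mathbb D f^*\to\mathbb D$, dualises precisely to the pullback $f^*\colon \mathbf R\Gamma(Y,\mathcal L)\to \mathbf R\Gamma(X,f^*\mathcal L)$, i.e. to $\mathbf R\Gamma$ of the unit $\mathrm{id}\to \mathbf R(f_*)f^*$; indeed $\mathbb D$ interchanges $\mathbf R f_!$ with $\mathbf R f_*$ and $f^!$ with $f^*$, and so turns the first natural transformation into the second.

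It then remains to prove that $f^*\colon \mathbf R\Gamma(Y,\mathcal L)\to \mathbf R\Gamma(X,f^*\mathcal L)$ is a quasi-isomorphism whenever $f$ is a weak homotopy equivalence. Since $f$ induces a bijection on connected components I may assume $X$ and $Y$ connected, so that $\mathcal L$ corresponds to a finite-dimensional representation of $\pi_1(Y)$ and $f^*\mathcal L$ to its restriction along the isomorphism $\pi_1(X)\xrightarrow{\sim}\pi_1(Y)$. For the spaces at hand --- orbifolds, manifolds with corners, geometric realisations of simplicial sets, CW complexes, all locally contractible --- sheaf cohomology $\mathbf R\Gamma(-,\mathcal L)$ agrees with singular cohomology with the corresponding local coefficients, compatibly with the pullback maps. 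One then invokes the standard fact that a weak homotopy equivalence is a quasi-isomorphism on singular cochains with any local coefficient system: $f$ lifts to a $\pi_1$-equivariant weak homotopy equivalence of universal covers, which induces a quasi-isomorphism of the twisted singular cochain complexes (alternatively, $H^{\hdot}(-,\mathcal L)$ is corepresented by maps into a suitable classifying space, and weak equivalences are detected by homotopy classes of maps out of CW complexes). Combining this with the reduction of the first paragraph finishes the argument.

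The genuinely non-formal point --- hence the place to be careful --- is the identification of the sheaf-theoretic groups $\mathbf R\Gamma(-,\mathcal L)$ with singular cohomology with local coefficients: weak--homotopy invariance fails for sheaf cohomology of pathological spaces (e.g. the Warsaw circle is weakly contractible but has non-trivial sheaf $H^1$), so it must be used that every space occurring in this paper is locally contractible. A secondary, purely mechanical issue is bookkeeping the orientation sheaves and checking the compatibility of the two adjunction natural transformations under $\mathbb D$; since all of these identifications are canonical, no sign obstruction actually arises.
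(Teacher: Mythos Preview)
Your proposal is correct and follows the same overall strategy as the paper: dualise the pushforward on chains to the pullback $f^*\colon \mathbf R\Gamma(Y,\mathcal L)\to\mathbf R\Gamma(X,f^*\mathcal L)$, and then invoke weak--homotopy invariance of cohomology with local coefficients. The paper implements the second step slightly differently: it realises $H^{\hdot}(Y,\mathcal L)$ as $\mathrm{Ext}^{\hdot}_{D(\mathcal Loc(Y))}(\underline{\mathbb Q}_Y,\mathcal L)$ and cites (via Lurie) that $f^*\colon D(\mathcal Loc(Y))\to D(\mathcal Loc(X))$ is an equivalence of triangulated categories of complexes with locally constant cohomology, whence the Ext groups agree. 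Your route through singular cohomology with local coefficients is more elementary and makes the hidden hypothesis explicit: the identification of $D(\mathcal Loc(-))$ with ``derived monodromy representations'' that the paper invokes \emph{also} requires locally contractible (or at least locally weakly contractible) spaces, for exactly the Warsaw--circle reason you mention, so your caveat is not only valid for your argument but for the paper's as well.
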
 
\begin{proof} Denote by $D(\mathcal {L}oc(X))$ a triangulated subcategory of the category $D(X),$ which consists of complexes of $\mathbb Q$-sheaves with locally constant cohomology. One can show that the functor:
\begin{equation}\label{qusl}
f^*\colon D(\mathcal {L}oc(Y))\longrightarrow D(\mathcal {L}oc(X))
\end{equation}
is an equivalence of triangulated categories.\footnote{This follows from the realisation of the category $D(\mathcal {L}oc(Y))$ in terms of "derived monodromy representations", see Appendix A in \cite{Lu}.} Further one has an identification $\mathrm {Ext}^{\hdot}_{D(\mathcal {L}oc(Y))}(\underline{\mathbb Q}_Y,\mathbb D(\mathcal L))^*\cong \mathbf R^{\hdot}\Gamma(X,\mathcal L)^*:=C_{-\hdotc}(X,\mathcal L)$ and the functor $f^*$ induces the morphism:
\begin{align}\label{qusl1}
H^{\hdot}(Y,\mathcal L)&\cong\\
 &\mathrm {Ext}^{\hdot}_{D(\mathcal {L}oc(Y))}(\underline{\mathbb Q}_Y,\mathcal L)\longrightarrow \mathrm {Ext}^{\hdot}_{D(\mathcal {L}oc(X))}(f^*\underline{\mathbb Q}_X,f^*\mathcal L)\cong H^{\hdot}(X,f^*\mathcal L).
\end{align}
The dual to the latter morphism coincides with the pushforward morphism for the chains with coefficients in a local system (this follows from adjunction between the pushforward and the pullback functors). Since functor \eqref{qusl} is equivalence, morphism \eqref{qusl1} is the quasi-isomorphism.

\end{proof}

\begin{remark} For a morphism $f\colon X\longrightarrow Y$ and a cohomologically constructible DG-sheaf $\mathcal K,$ we also have a notion of the Gysin pushforward in the compactly supported chains:
$$
f_!\colon C_c^{\hdot}(X,f^!\mathcal K)\longrightarrow C^{\hdot}_c(Y,\mathcal K)
$$
This morphism is dual to Gysin pullback for the Borel-Moore homology \eqref{bmp2}.
\end{remark} 
We also have the following classical:
\begin{Lemma}\label{gysin} Let $\mathcal K$ be a cohomologically constructible DG-sheaf on a stratified space $X.$ Consider the following diagram of stratified spaces:
$$
j\colon U\hookrightarrow X\hookleftarrow Z\colon i,
$$
where $U$ is an open stratified subspace and $Z$ is the corresponding closed complement. We have the following long exact sequence in Borel-Moore homology:
$$
\dots\rightarrow H^{BM}_{\hdotc}(Z,i^*\mathcal K)\rightarrow H^{BM}_{\hdotc}(X,\mathcal K)\rightarrow H^{BM}_{\hdotc}(U,j^*\mathcal K)\rightarrow\cdots
$$

\end{Lemma}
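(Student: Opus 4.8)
The plan is to realize this long exact sequence as the cohomology long exact sequence attached to a distinguished triangle of complexes of sheaves, then take derived global sections. First I would recall the standard recollement (gluing) triangle associated with the decomposition $X = U \sqcup Z$: for any object $\mathcal F \in D(X)$ there is a distinguished triangle
$$
\mathbf R(i_*)i^!\mathcal F \longrightarrow \mathcal F \longrightarrow \mathbf R(j_*)j^*\mathcal F \overset{+1}{\longrightarrow},
$$
which is the $!$-version of the more familiar $j_!j^* \to \mathrm{id} \to i_*i^*$. I would apply this triangle to $\mathcal F := \mathbb D(\mathcal K)$, the Verdier dual of $\mathcal K$ (which is again cohomologically constructible, hence the base-change identities below all hold). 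Using the standard compatibilities $i^!\mathbb D \cong \mathbb D i^*$ and $j^*\mathbb D \cong \mathbb D j^!$ — together with the fact that $j$ is an open embedding so $j^! = j^*$, and that for the open embedding $\mathbb D j^* \cong j^* \mathbb D$ as well — the triangle becomes a triangle whose three terms, after taking $\mathbf R\Gamma$, compute exactly $C^{BM}_{\hdotc}(Z, i^*\mathcal K)$, $C^{BM}_{\hdotc}(X,\mathcal K)$, and $C^{BM}_{\hdotc}(U, j^*\mathcal K)$ in the sense of the definition $C^{BM}_{\hdotc}(-,-) := \mathbf R^{-\hdot}\Gamma(-,\mathbb D(-))$ given in Section $2.2$.

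The key steps, in order, are: (1) write down the recollement triangle for $\mathbb D(\mathcal K)$; (2) identify $\mathbf R(i_*)i^!\mathbb D(\mathcal K) \cong \mathbf R(i_*)\mathbb D(i^*\mathcal K)$ and $\mathbf R(j_*)j^*\mathbb D(\mathcal K) \cong \mathbf R(j_*)\mathbb D(j^*\mathcal K)$ using the six-functor compatibilities and properness/openness of $i$ and $j$; (3) apply the functor $\mathbf R\Gamma(X,-)$, using $\mathbf R\Gamma(X,-)\circ \mathbf R(i_*) \cong \mathbf R\Gamma(Z,-)$ and $\mathbf R\Gamma(X,-)\circ \mathbf R(j_*) \cong \mathbf R\Gamma(U,-)$, to obtain a distinguished triangle of complexes of $\mathbb Q$-vector spaces
$$
C^{BM}_{\hdotc}(Z,i^*\mathcal K) \longrightarrow C^{BM}_{\hdotc}(X,\mathcal K) \longrightarrow C^{BM}_{\hdotc}(U,j^*\mathcal K) \overset{+1}{\longrightarrow};
$$
and (4) pass to the associated long exact sequence in homology, reading off the connecting map and checking that the first two maps are (up to sign) the pushforward along the proper closed inclusion $i$ and the pullback along the open inclusion $j$ defined in Section $2.3$ — the latter identification is immediate from the constructions \eqref{adj} and \eqref{adj2} specialized to closed and open embeddings.

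I do not expect a genuine obstacle here: everything is a bookkeeping exercise in the six-functor formalism, which the excerpt has already granted us for $D(X)$ on locally compact spaces. The one point that deserves a line of care is the identification of the maps in the triangle with the geometrically-defined pushforward \eqref{bmp1} and Gysin pullback \eqref{bmp2}: this requires unwinding that the recollement unit/counit maps $\mathbf R(i_*)i^! \to \mathrm{id}$ and $\mathrm{id}\to \mathbf R(j_*)j^*$ are dual, under $\mathbb D$, to the maps used to define $i_*$ on chains and $j^!$ on Borel-Moore chains respectively. Since $\mathcal K$ is cohomologically constructible, $\mathbb D^2 \cong \mathrm{id}$, so these dualities are clean and the identification goes through.
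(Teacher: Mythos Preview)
Your proposal is correct and is precisely the standard derivation of this Gysin triangle via the recollement distinguished triangle $\mathbf R(i_*)i^!\to\mathrm{id}\to\mathbf R(j_*)j^*$ applied to $\mathbb D(\mathcal K)$, followed by $\mathbf R\Gamma$. The paper does not supply its own argument here: its proof is simply a pointer to Subsection~2.6.9 of Chriss--Ginzburg, so your write-up in fact provides more detail than the paper does, and by the same route one would find in that reference.
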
 

\begin{proof} See Subsection $2.6.9$ in \cite{CG}.

\end{proof}

Let $X$ be a topological space equipped with a stratification $\EuScript S.$ We will be mostly interested in the case when $X$ is an orbifold with corners. In that case, we assume that a stratification $\EuScript S$ is defined by a set of corners of $X.$ Following \cite{FOOO1} we have a notion of a smooth fibration of orbifolds with corners. If $f\colon X\longrightarrow Y$ is the smooth fibration of orbifolds with corners we have standard properties for the shriek pullback (Proposition $3.3.2$ \cite{KS}):
\begin{Lemma}\label{KS} Let $f\colon X\longrightarrow Y$ be the smooth fibration of orbifolds with corners. We have the following:
\begin{enumerate}[(i)]
\item There is a canonical quasi-isomorphism of DG-sheaves:
$$
f^*\mathcal K\otimes f^!\underline{\mathbb Q}_X\overset{\sim}{\longrightarrow} f^!\mathcal K.
$$
The DG sheaf $f^!\underline{\mathbb Q}_X$ will be called the relative dualising complex. 

\item The relative dualising complex is isomorphic to a constructible DG-sheaf:
$$
f^!\underline{\mathbb Q}_X\cong \mathbf R(i_*)j_!\mathcal L[\dim X-\dim Y].
$$
Where $\mathcal L$ is a one-dimensional local system on the interior of $Y$ and:
\begin{equation*}\label{}
\begin{diagram}[height=2.4em,width=2.3em]
Int(X) &\rTo^j & Int(Y)\times_X Y   &  & \rTo_{}^{i}   & & X&  \\
  &&\dTo_{}  & &  &  &  \dTo_ {f}  && \\
 && Int(Y) &    & \rTo^{int} &   &  Y   \\
\end{diagram}
\end{equation*}
\end{enumerate}
\end{Lemma}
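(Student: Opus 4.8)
The plan is to prove both statements by reducing them to a local computation, since the relative dualising complex and the comparison morphism in (i) are assembled from the canonical six-functor operations and are therefore local on $X$ and on $Y$.

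First I would recall the canonical morphism underlying (i). For an arbitrary morphism $f$ there is a natural transformation $f^*(-)\otimes f^!(-)\to f^!((-)\otimes(-))$: by the adjunction $f_!\dashv f^!$, giving a map $f^*\mathcal K\otimes f^!\mathcal L\to f^!(\mathcal K\otimes\mathcal L)$ is the same as giving a map $f_!(f^*\mathcal K\otimes f^!\mathcal L)\to\mathcal K\otimes\mathcal L$, and by the projection formula the source is $\mathcal K\otimes f_!f^!\mathcal L$, so the counit $f_!f^!\mathcal L\to\mathcal L$ provides the desired map. Taking $\mathcal L$ to be the constant sheaf on $Y$ yields the morphism $f^*\mathcal K\otimes\omega_{X/Y}\to f^!\mathcal K$ of the statement, where $\omega_{X/Y}$ denotes the relative dualising complex. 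Being a morphism between cohomologically constructible sheaves, it is a quasi-isomorphism as soon as it is one on a neighbourhood of every point; and since the whole question is local and we work over $\mathbb Q$, an orbifold chart may be replaced by its étale model (invariants coincide with coinvariants for the structure groups, as noted in the Preliminaries), so it suffices to treat the case of a product projection.

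Next I would set up and evaluate the local model. By the definition of a smooth fibration of orbifolds with corners \cite{FOOO1}, $f$ is, over a suitable chart, isomorphic to the projection $\mathrm{pr}\colon B\times F\to B$ with $B\subset\mathbb R^{k}_{\geq 0}\times\mathbb R^{m}$ and $F\subset\mathbb R^{l}_{\geq 0}\times\mathbb R^{n}$ open. Here $\mathrm{pr}$ is the base change of $F\to *$ along $B\to *$, so the Künneth formula for $f^!$ gives $f^!\mathcal K\cong\mathcal K\boxtimes\omega_F$; in particular $\omega_{X/Y}\cong\underline{\mathbb Q}_B\boxtimes\omega_F$ and hence
$$
f^*\mathcal K\otimes\omega_{X/Y}\cong(\mathcal K\boxtimes\underline{\mathbb Q}_F)\otimes(\underline{\mathbb Q}_B\boxtimes\omega_F)\cong\mathcal K\boxtimes\omega_F\cong f^!\mathcal K,
$$
and a diagram chase identifies this composite with the canonical map of the previous paragraph, which proves (i). It remains to compute $\omega_F$: Künneth again gives $\omega_F\cong\omega_{\mathbb R^{l}_{\geq 0}}\boxtimes\omega_{\mathbb R^{n}}$ with $\omega_{\mathbb R^{n}}\cong\underline{\mathbb Q}[n]$ and $\omega_{\mathbb R^{l}_{\geq 0}}\cong j'_!\,\underline{\mathbb Q}_{(0,\infty)^{l}}[l]$, the $!$-extension of the (trivial) orientation sheaf from the corner-free locus — precisely the description of $or_X$ for a manifold with corners recalled earlier. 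Thus $\omega_F\cong (j_F)_!\,or_{Int(F)}[\dim F]$. Globalising, $\omega_{X/Y}$ restricted to $Int(X)$ is the shift by $\dim X-\dim Y$ of the rank-one relative orientation local system $\mathcal L$ of the corner-free fibre bundle $Int(X)\to Int(Y)$; its $!$-extension along $j\colon Int(X)\hookrightarrow f^{-1}(Int(Y))$ recovers $\omega_{X/Y}$ over the vertical (fibre) corners, and $\mathbf R i_*$ along $i\colon f^{-1}(Int(Y))\hookrightarrow X$ recovers it over the horizontal (base) corners with no correction term, because a punctured corner-neighbourhood in $B$ is a product of half-open intervals and lines, hence contractible, so that $\mathbf R i_*\underline{\mathbb Q}_{Int(B)}\cong\underline{\mathbb Q}_B$. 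This is exactly the asserted isomorphism $\omega_{X/Y}\cong\mathbf R(i_*)j_!\mathcal L[\dim X-\dim Y]$.

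The main obstacle is foundational rather than computational: one must ensure that the six-functor formalism, together with the Künneth and base-change isomorphisms for $f^!$, is genuinely available for orbifolds with corners and not merely for topological manifolds, and that the local product trivialisations glue compatibly under change of chart into the single global canonical morphism. Over $\mathbb Q$ the orbifold aspect is harmless — one works with (equivariant) derived categories on étale charts and uses that invariants coincide with coinvariants — and the gluing is forced by the naturality of the counit $f_!f^!\to\mathrm{Id}$ and of the Künneth isomorphism. Once these points are in place, the entire content of the lemma reduces to the elementary computation $\omega_{\mathbb R_{\geq 0}}\cong j_!\,\underline{\mathbb Q}_{(0,\infty)}[1]$, with everything else being bookkeeping.
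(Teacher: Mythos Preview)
Your proposal is correct and is precisely the argument the paper has in mind: the paper's own proof reads in its entirety ``Analogous to the case of manifolds. We leave details to the reader,'' referring to Proposition~3.3.2 of Kashiwara--Schapira, and you have supplied exactly those details---the standard construction of the comparison map via the projection formula, the local reduction to a product chart, and the elementary computation of $\omega_{\mathbb R_{\geq 0}^l}$---together with the routine passage from manifolds to orbifolds with corners over $\mathbb Q$.
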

\begin{proof} Analogous to the case of manifolds. We leave details to the reader.

\end{proof} 
The constructible DG-sheaf $\mathbf R(i_*)j_!\mathcal L$ will be called the \textit{relative orientation sheaf} and denote by $or_f.$ When a local system $\mathcal L$ is trivial, we will say that $f$ is an \textit{oriented smooth fibration of orbifolds with corners}.
\begin{remark}\label{sse} One can describe the constructible DG-sheaf $or_f$ more explicitly. Namely it is a one-dimensional local system on the strata which do not belong to the closure of $Int(Y)\times_X Y\setminus Int(X)$ and it is zero on strata $S\subset \overline{Int(Y)\times_X Y\setminus Int(X)}.$

\end{remark} 

The Gysin pullback along the smooth fibration satisfies the following base change property:

\begin{Prop}\label{bbm} Consider the fibered diagram of stratified spaces:
\begin{equation*}\label{}
\begin{diagram}[height=2.4em,width=2.3em]
 W:=Z\times_X Y   &  & \rTo_{}^{\tilde{f}}   & & Y&  \\
\dTo_{\tilde{g}}  & &  &  &  \dTo_ {g}  && \\
 Z &    & \rTo^{f} &   &  X   \\
\end{diagram}
\end{equation*}
such that the following conditions hold:
\begin{enumerate}[(i)]
\item $g$ is a proper and smooth fibration of orbifolds with corners, 
\item The adjoint to the canonical morphism $\mathbf R(\tilde g_!) \tilde f^*g^!\cong f^*\mathbf R(g_!)g^!\rightarrow f^*$ is an equivalence: 
$$
\tilde{f}^*g^!\overset{\sim}{\longrightarrow} \tilde{g}^!f^*.
$$
\end{enumerate} 
Then for a cohomologically constructible sheaf $\mathcal K$ on $X,$ we have the commutative diagram:
\begin{equation*}\label{}
\begin{diagram}[height=3em,width=2.8em]
C_{\hdotc+d}(W,(g \tilde f)^*\mathcal K\otimes or_{\tilde g})    &  & \rTo_{}^{\tilde{f}_*}   & & C_{\hdotc+d}(Y,g^*\mathcal K\otimes or_g)&  \\
\uTo_{\tilde{g}^!}  & &  &  &  \uTo_ {g^!}  && \\
C_{\hdotc}(Z,f^*\mathcal K) &    & \rTo^{f_*} &   &  C_{\hdotc}(X,\mathcal K).   \\
\end{diagram}
\end{equation*}
Where $\dim Y-\dim X=\dim W-\dim Z:=d$ and $or_{\tilde g}:=\tilde f^*or_{g}.$

\end{Prop}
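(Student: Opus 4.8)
The plan is to translate the four arrows of the square into the language of the six operations, to push everything down to $X$, and to recognise the resulting identity as an instance of the standard compatibility between base change and the adjunction units and counits. Throughout write $\mathcal M:=\mathbb D(\mathcal K)$ for the Verdier dual on $X$; as $\mathcal K$ is cohomologically constructible, $\mathbb D$ is involutive and exchanges $(-)^*$ with $(-)^!$, so that $\mathbb D(f^*\mathcal K)\cong f^!\mathcal M$ on $Z$, $\mathbb D(g^!\mathcal K)\cong g^*\mathcal M$ on $Y$, and likewise over $W$.

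First I would unwind the definitions of Subsection~2.3. By \eqref{p1}--\eqref{adj}, once one identifies $C_{\bullet}(Z,f^*\mathcal K)=\mathbf R^{-\bullet}\Gamma_c(X,\mathbf R(f_!)f^!\mathcal M)$ and $C_{\bullet}(X,\mathcal K)=\mathbf R^{-\bullet}\Gamma_c(X,\mathcal M)$, the pushforward $f_*$ is $\mathbf R\Gamma_c(X,-)$ applied to the counit $\mathbf R(f_!)f^!\mathcal M\xrightarrow{\varepsilon_f}\mathcal M$. Since $g$ is proper, $\mathbf R(g_*)=\mathbf R(g_!)$, and since $g$ is a smooth fibration Lemma~\ref{KS}(i) gives $g^!\mathcal K\cong g^*\mathcal K\otimes or_g[d]$, hence $\mathbb D(g^!\mathcal K)\cong g^*\mathcal M$; thus by \eqref{adj2} the Gysin pullback $g^!$ is $\mathbf R\Gamma_c(X,-)$ applied to the unit $\mathcal M\xrightarrow{\eta_g}\mathbf R(g_!)g^*\mathcal M$, and its target $\mathbf R\Gamma_c(Y,g^*\mathcal M)$ is by definition $C_{\bullet+d}(Y,g^*\mathcal K\otimes or_g)$. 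The morphism $\tilde g$ is again a proper smooth fibration of relative dimension $d$ (being a base change of $g$, whence also $\dim W-\dim Z=d$), so the same discussion applies verbatim to $\tilde g$ and to $\tilde f$. Two auxiliary facts should be read off from hypothesis~(ii): specialising it to $\mathcal K=\underline{\mathbb Q}_X$ and invoking Lemma~\ref{KS} for $g$ and for $\tilde g$ yields the identification $or_{\tilde g}\cong\tilde f^* or_g$ of the statement; and Verdier-dualising the isomorphism $\tilde f^*g^!\cong\tilde g^!f^*$ turns it into the base change isomorphism $\beta\colon\tilde g^*f^!\mathcal M\xrightarrow{\ \sim\ }\tilde f^!g^*\mathcal M$, which is exactly the identification of coefficients along which $\tilde f_*$ is composed after $\tilde g^!$.

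Next I would push the arrows living over $W,Y,Z$ down to $X$, using $\mathbf R\Gamma_c(Z,-)=\mathbf R\Gamma_c(X,\mathbf R(f_!)-)$, $\mathbf R\Gamma_c(Y,-)=\mathbf R\Gamma_c(X,\mathbf R(g_!)-)$, $\mathbf R\Gamma_c(W,-)=\mathbf R\Gamma_c(X,\mathbf R((g\tilde f)_!)-)$, together with the canonical identifications $\mathbf R(f_!)\mathbf R(\tilde g_!)\cong\mathbf R((f\tilde g)_!)=\mathbf R((g\tilde f)_!)\cong\mathbf R(g_!)\mathbf R(\tilde f_!)$ coming from $f\tilde g=g\tilde f$. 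After this both composites $g^!\circ f_*$ and $\tilde f_*\circ\tilde g^!$ acquire the form $\mathbf R\Gamma_c(X,-)$ of a morphism of complexes of sheaves on $X$ with source $\mathbf R(f_!)f^!\mathcal M$ and target $\mathbf R(g_!)g^*\mathcal M$: the first is $\eta_g\circ\varepsilon_f$; the second is $\mathbf R(g_!)(\varepsilon_{\tilde f})\circ\mathbf R(g_!)\mathbf R(\tilde f_!)(\beta)\circ(\text{the above identification})\circ\mathbf R(f_!)(\eta_{\tilde g})$. It therefore suffices to check that these two morphisms of complexes on $X$ coincide.

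This last identity is purely formal. By construction $\beta$ is the \emph{mate} of the base change isomorphism $\mathbf R(\tilde f_!)\tilde g^*\xrightarrow{\ \sim\ }g^*\mathbf R(f_!)$ — which is precisely the morphism that hypothesis~(ii) asserts to be invertible — so one of the two defining ``mate'' squares gives $\varepsilon_{\tilde f}\circ\mathbf R(\tilde f_!)(\beta)=g^*(\varepsilon_f)\circ\mathrm{pbc}$; combining this with the naturality of $\eta_g$ and with the compatibility of the base change transformation with the units $\eta_g,\eta_{\tilde g}$ and with the composition identification $\mathbf R(f_!)\mathbf R(\tilde g_!)\cong\mathbf R(g_!)\mathbf R(\tilde f_!)$ — the ``exchange''/cube coherences of the six operations, see e.g.\ \cite[\S3.1]{KS} and \cite[Ch.~8]{CG} — yields the desired equality; applying $\mathbf R\Gamma_c(X,-)$ finishes the proof. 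I expect the only real work to be organisational: keeping track of the full web of canonical isomorphisms (Verdier duality, the degree shifts by $d$, the orientation sheaves $or_g,or_{\tilde g}$ produced by Lemma~\ref{KS}) and verifying in particular that the isomorphism of hypothesis~(ii), once transported through $\mathbb D$, is \emph{literally} the base change morphism $\beta$ and not $\beta$ post-composed with some automorphism. The orbifold-with-corners subtleties having been absorbed into Lemma~\ref{KS}, once one works with constructible sheaves on locally compact spaces everything reduces to a diagram chase.
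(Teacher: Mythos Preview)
Your proposal is correct and follows essentially the same route as the paper. Both arguments reduce the claim to a sheaf-level identity on $X$ built from the counit $\varepsilon_f\colon\mathbf R(f_!)f^!\to\mathrm{Id}$ and the unit $\eta_g\colon\mathrm{Id}\to\mathbf R(g_*)g^*$, and then invoke base change and Lemma~\ref{KS} to match the $W$-corner; the only organisational difference is that the paper writes down the square
\[
\begin{array}{ccc}
\mathbf R f_!f^!\mathbf R g_! g^* & \longrightarrow & \mathbf R g_! g^* \\
\uparrow & & \uparrow \\
\mathbf R f_!f^! & \longrightarrow & \mathrm{Id}
\end{array}
\]
first---which commutes \emph{tautologically} by naturality, since the two transformations act on opposite sides---and only afterwards uses base change to rewrite the upper-left corner as $\mathbf R f_!\mathbf R(\tilde g_!)\mathbb D(\tilde f^*g^*\mathcal K\otimes or_{\tilde g}[d])$, whereas you trace both composites through $W$ from the outset and compare them via the mate identities. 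Your version is more explicit about why the base-change/mate relations force the agreement; the paper's is shorter because it isolates the trivially commuting square before any rewriting.
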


\begin{proof} Using the standard adjunction morphisms, we have a commutative diagram (in the category of sheaves): 
\begin{equation*}\label{}
\begin{diagram}[height=3em,width=2.6em]
\mathbf R f_!f^!g_!g^*   &  & \rTo_{}^{adj}   & & \mathbf R g_!g^*&  \\
\uTo_{adj}  & &  &  &  \uTo_ {adj}  && \\
\mathbf R f_!f^! &    & \rTo^{adj} &   &  Id   \\
\end{diagram}
\end{equation*}
Now plug in a sheaf $\mathbb D(\mathcal K)$ in the diagram above, we get:
\begin{equation*}\label{}
\begin{diagram}[height=3em,width=2.6em]
\mathbf R f_!f^!g_!g^* \mathbb D(\mathcal K)  &  & \rTo_{}^{adj}   & & \mathbf R g_!g^*\mathbb D(\mathcal K)&  \\
\uTo_{adj}  & &  &  &  \uTo_ {adj}  && \\
\mathbf R f_!f^!\mathbb D(\mathcal K) &    & \rTo^{adj} &   &  \mathbb D(\mathcal K)  \\
\end{diagram}
\end{equation*}
Using the base change Theorem for sheaves (Proposition $3.1.9$ in \cite{KS}), transversality, and the fact that $g$ is a smooth fibration (Lemma \ref{KS}) we get a commutative diagram: 
\begin{equation*}
\begin{diagram}[height=3em,width=2.6em]
\mathbf R f_!\tilde{g}_!\mathbb D(\tilde f^*g^*\mathcal K\otimes or_{[\tilde g}[d])  &  & \rTo_{}^{adj}   & & \mathbf R g_!\mathbb D(g^*\mathcal K\otimes or_g[d])&  \\
\uTo_{adj}  & &  &  &  \uTo_ {adj}  && \\
\mathbf R f_!\mathbb D(f^* \mathcal K) &    & \rTo^{adj} &   &  \mathbb D(\mathcal K)  \\
\end{diagram}
\end{equation*}
Applying the derived sections with compact support, we get the desired diagram.

\end{proof}

\begin{remark}\label{BCC} We will be most interested in the situation when a smooth fibration $g\colon Y\longrightarrow X$ is transversal to an inclusion $f\colon Z\hookrightarrow X$ of stratified orbispaces (cf. \cite{FOOO1} \cite{Gore}). 
In that case, all conditions from Proposition \ref{bbm} are satisfied. 

\end{remark}

Let $\tilde g\colon W\longrightarrow Z$ be a morphism from Proposition \ref{bbm}. We assume that $W$ and $Z$ are compact stratified spaces with contractible strata. We can explicitly describe the Gysin pullback in chains (cf. \cite{Gore}):

\begin{Prop}\label{Gor1} Let $\mathcal L$ be a local system on $Z,$ which is a pullback of some local system on $Y.$ In terms of the Cousin complexes from Proposition \ref{Cous1} the Gysin pullback: 
$$\tilde{g}^!\colon C_{\hdotc}(Z,\mathcal L)\longrightarrow C_{\hdotc+d}(W,\tilde{g}^*\mathcal L\otimes or_{\tilde g})$$
acts by the rule:
$$
\tilde{g}^!\colon H_{0}^{BM}(S_{\alpha},\mathcal L) \longrightarrow H_{0}^{BM}(\widetilde{S}_{\alpha},{\tilde{g}^*\mathcal L\otimes or_{\tilde g}}),
$$
where $\widetilde S_{\alpha}$ is a collection of strata in $\tilde g^{-1}(S_{\alpha})$ of the highest dimension. 
\end{Prop}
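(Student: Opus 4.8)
The plan is to reduce the statement to a local computation on the strata of $Z$, using the Cousin complex from Proposition \ref{Cous1} as a concrete model for both $C_{\hdotc}(Z,\mathcal L)$ and $C_{\hdotc+d}(W,\tilde g^*\mathcal L\otimes or_{\tilde g})$. Since $W$ and $Z$ are assumed compact with contractible strata, Proposition \ref{Cous1} identifies $C_{\hdotc}(Z,\mathcal L)$ with the complex whose term in degree $-m$ is $\bigoplus_{\cdim S_\alpha = m} H_0^{BM}(S_\alpha,\mathcal L)$ (using that for a contractible stratum $H_{\dim S_\alpha}(S_\alpha,\mathcal L)\otimes\textsf{or}_\alpha \cong H_0^{BM}(S_\alpha,\mathcal L)$), and similarly for $W$. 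So the first step is to make these two Cousin presentations precise and to observe that $\tilde g^!$, being a map of complexes, is determined by what it does on each summand $H_0^{BM}(S_\alpha,\mathcal L)$.

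Next I would unwind the definition of $\tilde g^!$ from \eqref{adj2}: it is the derived global sections with compact support of the adjunction morphism $\mathbb D(\mathcal L)\to \mathbf R(\tilde g_*)\tilde g^*\mathbb D(\mathcal L)\cong \mathbf R(\tilde g_*)\mathbb D(\tilde g^!\mathcal L)$. The key point is to track this morphism through the Postnikov/Cousin filtration used in the proof of Proposition \ref{Cous1}. Because $\tilde g$ is (the base change of) a proper smooth fibration, the preimage $\tilde g^{-1}(S_\alpha)$ is itself stratified, and $\tilde g^!\mathcal L$ restricted to $\tilde g^{-1}(S_\alpha)$ is, up to the relative orientation twist $or_{\tilde g}$ and the degree shift $d$, just $\tilde g^*\mathcal L$. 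The associated-graded piece of $\tilde g^!$ in codimension $m$ therefore sends the $S_\alpha$-summand into the sum of the $\widetilde S_\alpha$-summands, where $\widetilde S_\alpha$ ranges over strata of $\tilde g^{-1}(S_\alpha)$ of codimension $m$ in $W$, i.e. of the highest dimension among strata mapping to $S_\alpha$ — strata of higher codimension land in higher-degree terms and do not contribute to the leading (degree-preserving) term. On each such $\widetilde S_\alpha$ the map is the pullback $H_0^{BM}(S_\alpha,\mathcal L)\to H_0^{BM}(\widetilde S_\alpha,\tilde g^*\mathcal L\otimes or_{\tilde g})$, which is the restriction of the classical Gysin pullback for the smooth fibration $\widetilde S_\alpha\to S_\alpha$ (a fibration with contractible fibers since the strata are contractible, so this pullback is essentially an isomorphism identifying the two one-dimensional orientation-twisted homologies). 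This is where Remark \ref{sse} and Lemma \ref{KS}(ii) enter, to identify $or_{\tilde g}$ on $\widetilde S_\alpha$ and to confirm the twist is exactly $\tilde f^* or_g$.

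The main obstacle I expect is the bookkeeping of the relative orientation sheaf $or_{\tilde g}$ together with the degree shift: one must check that on the top-dimensional strata $\widetilde S_\alpha$ of $\tilde g^{-1}(S_\alpha)$ the sheaf $or_{\tilde g}$ really is a (nonzero) one-dimensional local system — not the zero sheaf that Remark \ref{sse} attaches to strata inside $\overline{Int(Y)\times_X Y\setminus Int(X)}$ — and that the various compatibilities (base change in Proposition \ref{bbm}, the identification $\mathbf R^{-\hdot}\Gamma_c$ of the Cousin complex) are strictly functorial rather than merely functorial up to homotopy. Once the leading term of $\tilde g^!$ with respect to the Cousin filtration is identified stratum-by-stratum, the formula in the statement follows; I would leave the routine sign and orientation normalizations to the reader, as is done elsewhere in the paper.
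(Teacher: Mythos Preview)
Your proposal is correct and follows essentially the same route as the paper: unwind the adjunction defining $\tilde g^!$, feed it through the Cousin/Postnikov filtration of Proposition~\ref{Cous1}, use base change along each stratum $S_\alpha \hookrightarrow Z$ to reduce to $\tilde g^{-1}(S_\alpha)$, and then invoke Remark~\ref{sse} to see that $or_{\tilde g}$ (and hence the coefficient sheaf) vanishes on the lower-dimensional strata of the preimage, leaving only the top-dimensional $\widetilde S_\alpha$. The paper's write-up is slightly more compact---it applies the Cousin resolution on $Z$ directly to the pushforward $\mathbf R(\tilde g_*)\mathbb D(\tilde g^*\mathcal L\otimes or_{\tilde g}[d])$ rather than resolving on both $Z$ and $W$---but this is a cosmetic difference, and your identification of the orientation-sheaf bookkeeping as the only real subtlety is exactly right.
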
 

\begin{proof} Consider the adjunction morphism: $adj\colon \mathbb D(\mathcal L) \longrightarrow\mathbf R(\tilde g_*)\tilde g^*\mathbb D(\mathcal L) .$ Using the fact that for the submersion $g$ one has $\tilde g^!\mathcal L\cong \tilde g^*\mathcal L\otimes or_{\tilde g}[\dim X-\dim Y]$ we have:
$$
\mathbb D(\mathcal L) \longrightarrow\mathbf R(\tilde g_*)\mathbb D(\tilde g^!\mathcal L)\cong \mathbf R(\tilde g_*)\mathbb D(\tilde g^*\mathcal L\otimes or_{\tilde g}[\dim X-\dim Y])
$$
Consider the Cousin resolution from Proposition \ref{Cous1} for the sheaf $\mathbf R(\tilde g_*)\mathbb D(\tilde g^*\mathcal L\otimes or_{\tilde g}[\dim X-\dim Y]),$ then from the base change theorem for sheaves \cite{KS}, applied to the diagram:
\begin{equation*}\label{}
\begin{diagram}[height=2em,width=2em]
\tilde g^{-1}(S_{\alpha})   &  & \rTo^{\tilde{j}_{\alpha}}   & & W  \\
\dTo_{\tilde g_{\alpha}}  & &  &  &  \dTo_ {\tilde g}  && \\
S_{\alpha} &    & \rTo^{j_{\alpha}} &   &  Z   \\
\end{diagram}
\end{equation*}
we get the following description of the Gysin pullback morphism in BM chains:
$$
f^!\colon C_{\hdotc}^{BM}(S_{\alpha},i^*_{\alpha}\mathcal L)\longrightarrow C_{\hdotc+\dim X-\dim Y}^{BM}(\tilde g^{-1}(S_{\alpha}),\tilde{i}^*_{\alpha}(\mathcal L\otimes or_{\tilde g}))
$$
Note that by Remark \ref{sse} a sheaf $\tilde{i}^*_{\alpha}(\mathcal L\otimes or_{\tilde g})$ vanishes on strata of dimension less then a dimension of $S_{\alpha}.$ Hence taking the zero homology we get the desired result. 
\end{proof}

\section{Ribbon graph complexes}

\subsection{Ribbon graphs}
By a \textit{ribbon graph} $\Gamma$  we understand a triple $(H(\Gamma), \sigma_1, \sigma_0)$ where $H(\Gamma)$ is a finite set called the \textit{set of half
edges of} $\Gamma$, $\sigma_1\colon H(\Gamma)\longrightarrow  H(\Gamma)$ is a fixed point free involution and a permutation $\sigma_0 \colon H(\Gamma)\longrightarrow  H(\Gamma).$ Orbits of $\sigma_1$ are called the \textit{edges of} $\Gamma$, we will denote the set of edges of $\Gamma$ by $E(\Gamma).$  A set of orbits, $V(\Gamma):= H(\Gamma )/\sigma_0,$ is called the set of \textit{vertices of the ribbon graph} $\Gamma.$ We have a canonical map:
$$
p\colon H(\Gamma)\longrightarrow V(\Gamma):= H(\Gamma )/\sigma_0
$$
For each $v\in V(\Gamma)$ the pre-image of $p$ will be called a set of half-edges attached to a vertex $v:$
$$
p^{-1}(v):=H_v(\Gamma).
$$
The orbits of the permutation $\sigma_{2}:=\sigma_0^{-1}\sigma_1$ are called \textit{boundaries} of the ribbon graph $\Gamma.$ The set of boundaries of $\Gamma$ is denoted by $B(\Gamma)$. By the \textit{genus} of a ribbon graph, we understand the following quantity;
$$
g(\Gamma):=1+\frac{1}{2}(E(\Gamma)-V(\Gamma)-B(\Gamma))
$$
The definition implies that a ribbon graph $\Gamma$ is the same as a standard graph with a fixed cyclic structure on the set of half-edges $H_v(\Gamma)$ at each vertex $v\in V(\Gamma).$ The definitions of a genus and boundaries of a ribbon graph are consistent with the following well-known construction: with a ribbon graph $\Gamma$ it is always possible to associate a compact bordered surface $\Sigma_{\Gamma}$ by replacing vertices with disks and edges with strips \cite{Kon3}. Then a set of boundary components of a ribbon graph corresponds to the set boundaries of the surface $\Sigma_{\Gamma}$ and $g(\Sigma_{\Gamma})=g(\Gamma).$
\par\medskip
Recall that in \cite{MW} for a ribbon graph $\Gamma$ a notion of a \textit{corner} of a ribbon graph is defined. Representing ribbon graphs as graphs with "blown up" vertices a corner is defined as an interval between two half-edges. A set of corners associated with a ribbon graph $\Gamma$ will be denoted by $C(\Gamma).$ For a ribbon graph $\Gamma$ we have the following partition:
$$
C(\Gamma)=\coprod_{b\in B(\Gamma)} C(b)
$$
Elements from $C(b)$ will be called \textit{corners attached to a boundary} $b.$ Following \textit{ibid.} with a ribbon graph $\Gamma$ with $l$-edges and $n$-boundaries we can associate a ribbon graph $e_{c_1,c_2}(\Gamma)$ with $l+1$ edges and $n+1$ boundaries, here $c_1,c_2$ are corners which belong to the \emph{same boundary} $b.$ This operation is defined by attaching a new edge $e$ connecting corners $c_1$ and $c_2.$ Pictorially that means:
$$
\vspace{-7mm} \Ba{c}\resizebox{18mm}{!}{\xy
(2.5,5.5)*{e},
 (11.5,-1.5)*{}="1",
(-1.5,-14.5)*{}="2",
 (0,0)*{}="a1",
(10,0)*{}="a2",
(13,-3)*{}="a3",
(13,-13)*{}="a4",
(10,-16)*{}="a5",
(0,-16)*{}="a6",
(-3,-13)*{}="a7",
(-3,-3)*{}="a8",
"1";"2" **\crv{(20,27) & (-27,-20)};
\ar @{-} "a1";"a2" <0pt>
\ar @{.} "a2";"a3" <0pt>
\ar @{-} "a3";"a4" <0pt>
\ar @{.} "a4";"a5" <0pt>
\ar @{-} "a5";"a6" <0pt>
\ar @{.} "a6";"a7" <0pt>
\ar @{-} "a7";"a8" <0pt>
\ar @{.} "a8";"a1" <0pt>
\endxy}\vspace{5mm}\Ea
$$
Note that $g(e_{c_1,c_2}(\Gamma))=g(\Gamma),$ for details see \textit{ibid}. 
\subsection{$\textsf {RGra}_d$ and ribbon graph complexes}

Let us briefly recall some results of \cite{MW}. Let $d$ be an integer, denote by $\textsf{RGra}_d$ the \textit{properad of ribbon graphs}. This properad is defined by the collection of vector spaces $\textsf{RGra}_d(n,m)$ which consists of directed and connected ribbon graphs with $[n]$-labelled boundaries and $[m]$-labelled vertices. We assume the choice of the orientation on the set of edges of a ribbon graph (see \textit{ibid}). The composition is defined by attaching a vertex of one ribbon graph to the boundary of another. More precisely with every ribbon graph $\Gamma$ we consider the corresponding bordered Riemann surface $\Sigma_{\Gamma},$ where boundaries of the surface we represent as "out-circle" further we remove disks in $\Sigma_{\Gamma}$ at vertices of $\Gamma$ and represent these boundaries as "in-circle", finally we "glue" two Riemann surfaces in all possible ways by identifying "in-circle" with "out-circles", we refer to \cite{MW} for details (cf. \cite{TZ}). Theorem $4.2.3$ from \cite{MW} (based on \cite{CS1}) claims that there is a natural morphism of properads:
\begin{equation}\label{kp}
s^*\colon \Lambda\textsf{LieB}_{d,d}\longrightarrow \textsf{RGra}_d.
\end{equation}
Where $\Lambda\textsf{LieB}_{d,d}$ is the standard properad controlling Lie $d-1$-bialgebras with a bracket and a cobracket being symmetric operations of degree $d-1.$\footnote{For $d=1$ algebras over this properad are Lie bialgebras in the sense of V. Drinfeld \cite{Drin}.} It will be convenient to represent operations in the properad $\Lambda\textsf{LieB}_{d,d}$ as trees. For the generators of $\Lambda\textsf{LieB}_{d,d}$ it means:
$$
\mathrm {bracket}=\begin{xy}
 <0mm,0.66mm>*{};<0mm,3mm>*{}**@{-},
 <0.39mm,-0.39mm>*{};<2.2mm,-2.2mm>*{}**@{-},
 <-0.35mm,-0.35mm>*{};<-2.2mm,-2.2mm>*{}**@{-},
 <0mm,0mm>*{\circ};<0mm,0mm>*{}**@{},
   <0.39mm,-0.39mm>*{};<2.9mm,-4mm>*{^{}}**@{},
   <-0.35mm,-0.35mm>*{};<-2.8mm,-4mm>*{^{}}**@{},
\end{xy}
\qquad \mathrm{cobracket}=\begin{xy}
 <0mm,-0.55mm>*{};<0mm,-2.5mm>*{}**@{-},
 <0.5mm,0.5mm>*{};<2.2mm,2.2mm>*{}**@{-},
 <-0.48mm,0.48mm>*{};<-2.2mm,2.2mm>*{}**@{-},
 <0mm,0mm>*{\circ};<0mm,0mm>*{}**@{},
 \end{xy}
$$
The morphism $s^*$ is defined on generators of $\Lambda\textsf{LieB}_{d,d}$ by the following rule:
$$
s^*\colon \begin{xy}
 <0mm,0.66mm>*{};<0mm,3mm>*{}**@{-},
 <0.39mm,-0.39mm>*{};<2.2mm,-2.2mm>*{}**@{-},
 <-0.35mm,-0.35mm>*{};<-2.2mm,-2.2mm>*{}**@{-},
 <0mm,0mm>*{\circ};<0mm,0mm>*{}**@{},
   <0.39mm,-0.39mm>*{};<2.9mm,-4mm>*{^{}}**@{},
   <-0.35mm,-0.35mm>*{};<-2.8mm,-4mm>*{^{}}**@{},
\end{xy}
\longmapsto
\xy
 (0,0)*{\bullet}="a",
(5,0)*{\bullet}="b",
\ar @{-} "a";"b" <0pt>
\endxy \ \quad s^*\colon \begin{xy}
 <0mm,-0.55mm>*{};<0mm,-2.5mm>*{}**@{-},
 <0.5mm,0.5mm>*{};<2.2mm,2.2mm>*{}**@{-},
 <-0.48mm,0.48mm>*{};<-2.2mm,2.2mm>*{}**@{-},
 <0mm,0mm>*{\circ};<0mm,0mm>*{}**@{},
 \end{xy}\longmapsto 0.
$$
Hence using the theory of deformation complexes for properads \cite{MV1}\cite{MV2} one may consider the deformation complex of this morphism of properads:\footnote{Note that here we consider ribbon graphs of arbitrary valency.}
\begin{equation}\label{mwg1}
\textsf{RGC}_d^{\hdot}(\delta):=\mathrm{Def}(\Lambda\textsf{LieB}_{d,d}\overset{s^*}{\longrightarrow} \textsf{RGra}_{d})
\end{equation}
According to \cite{MV1} and \cite{MV2} the complex $\textsf{RGC}_d^{\hdot}(\delta)$ is a DG-Lie algebra with a bracket $[-,-]$ which comes from the properadic composition. The differential in complex \eqref{mwg1} is defined by the rule $\delta:=[-,\Gamma],$ here $\Gamma$ is the following element:
$$\Gamma=\xy
 (0,0)*{\bullet}="a",
(5,0)*{\bullet}="b",
\ar @{-} "a";"b" <0pt>
\endxy
$$
We will this object the \textit{Kontsevich-Penner ribbon graph complex}. This complex can be explicitly realised as a complex
with $k$-cochains $\textsf{RGC}^k_d$ being isomorphism classes of ribbon graphs $\Gamma$ with:
$$k=-2gd+|E(\Gamma)|$$
together with an orientation ${or}$ defined by a choice of an element in the vector space:
\begin{enumerate}[(a)]
\item $\det(V(\Gamma))\otimes \det(B(\Gamma))\bigotimes_{e\in E(\Gamma)} \det (H(e))$\footnote{Here by $H(e)$ we have denote the set of two half-edges associated with an edge $e.$} for $d$ odd. 
\par\medskip 
\item $\det(E(\Gamma))$ for $d$ even. 
\end{enumerate} 
We assume that:
$$(\Gamma,{or})=-(\Gamma,{or}^{op}).$$
The differential $\delta\colon \textsf{RGC}_{d}^{k}\longrightarrow \textsf{RGC}_{d}^{k+1}$ is defined by the vertex splitting (which preserves the cyclic structure (see \cite{MW} for details).
\begin{remark} It is evident that for different odd (resp. even) values of $d$ the Kontsevich-Penner ribbon graph complexes are isomorphic up to a shift. Moreover according to \cite{MW} these complexes are isomorphic for \textit{all values} of $d$ (see the proof of Proposition \eqref{expr}).
\end{remark} 
We have the following decomposition:
\begin{equation}\label{sum1}
H^{\hdot}(\textsf{RGC}_d(\delta))=H^{\hdot}(\textsf{RGC}_d(\delta)_{\geq 3})\oplus H^{\hdot}(\textsf{RGC}_d(\delta)_{\leq 2})
\end{equation}
Where by $\textsf{RGC}_d^{\hdot}(\delta)_{\geq 3}$ we have denoted the ribbon graph complex which consist of at least trivalent ribbon graphs and by $\textsf{RGC}_d^{\hdot}(\delta)_{\leq 2}$ we have denoted the ribbon graph complex which consist of ribbon graphs of valency $\leq 2.$ Following the argument from \cite{Will} (Proposition $3.4$) the cohomology of the second direct summand occur only in degrees $1,5,9\dots,2k+1:$
\begin{equation}\label{sum2}
H^{\hdot}(\textsf{RGC}_d(\delta)_{\leq 2})\cong \bigoplus_{i=k\geq 1,\, j\equiv 1 \mod 4} \mathbb Q[j]
\end{equation}
These elements can be represented by the loop-type ribbon graphs $R_{k}$ (ribbon graphs with $k$-edges, $k$-vertices, and two boundaries). Note that the Kontsevich-Penner ribbon graph complex $\textsf{RGC}_d^{\hdot}(\delta)_{\geq 3}$ can be identified with the following product:
\begin{equation}\label{sum3}
\textsf{RGC}_d^{\hdot}(\delta)_{\geq 3}\cong \prod_{g\geq 0,n\geq 1\, 2g+n-2>0}^{\infty} C_c^{\hdot-2gd}(M_{g,n}^{rib}/\Sigma_n,\mathbb Q).
\end{equation}
Where $M_{g,n}^{rib}$ is Kontsevich's moduli space of metric ribbon graphs of genus $g$ with $n$-labelled boundaries (see \cite{Kon1} and \cite{Kon2}).
\par\medskip
Analogous to morphism \eqref{kp} we can consider the morphism $s$ defined by the rule:
$$
s\colon \begin{xy}
 <0mm,0.66mm>*{};<0mm,3mm>*{}**@{-},
 <0.39mm,-0.39mm>*{};<2.2mm,-2.2mm>*{}**@{-},
 <-0.35mm,-0.35mm>*{};<-2.2mm,-2.2mm>*{}**@{-},
 <0mm,0mm>*{\circ};<0mm,0mm>*{}**@{},
   <0.39mm,-0.39mm>*{};<2.9mm,-4mm>*{^{}}**@{},
   <-0.35mm,-0.35mm>*{};<-2.8mm,-4mm>*{^{}}**@{},
\end{xy}
\longmapsto
\xy
 (0,0)*{\bullet}="a",
(5,0)*{\bullet}="b",
\ar @{-} "a";"b" <0pt>
\endxy \ \quad s\colon \begin{xy}
 <0mm,-0.55mm>*{};<0mm,-2.5mm>*{}**@{-},
 <0.5mm,0.5mm>*{};<2.2mm,2.2mm>*{}**@{-},
 <-0.48mm,0.48mm>*{};<-2.2mm,2.2mm>*{}**@{-},
 <0mm,0mm>*{\circ};<0mm,0mm>*{}**@{},
 \end{xy}\longmapsto \ \xy
(0,-2)*{\bullet}="A";
(0,-2)*{\bullet}="B";
"A"; "B" **\crv{(6,6) & (-6,6)};
\endxy.
$$
We can consider the following deformation complex:
\begin{equation}\label{mwg2}
\textsf{RGC}_d^{\hdot}(\delta_s):=\mathrm{Def}(\Lambda\textsf{LieB}_{d,d}\overset{s}{\longrightarrow} \textsf{RGra}_d)
\end{equation}
We will call this object \textit{the Merkulov-Willwacher ribbon graph complex}. Like the Kontsevich-Penner ribbon graph complex the complex
$\textsf{RGC}_d^{\hdot}(\delta_s)$ is a DG-Lie algebra. The differential is defined by the following rule $\delta:=[-,\Gamma],$ here $\Gamma$ is the following element:
$$\Gamma=\xy
 (0,0)*{\bullet}="a",
(5,0)*{\bullet}="b",
\ar @{-} "a";"b" <0pt>
\endxy+
\ \xy
(0,-2)*{\bullet}="A";
(0,-2)*{\bullet}="B";
"A"; "B" **\crv{(6,6) & (-6,6)};
\endxy
$$
The differential $\delta_s$ can be decomposed as $\delta_s=\delta+\Delta_1,$ here: $$\Delta_1:=[-, \ \xy
(0,-2)*{\bullet}="A";
(0,-2)*{\bullet}="B";
"A"; "B" **\crv{(6,6) & (-6,6)};
\endxy]$$
is the so-called \textit{Bridgeland differential}. This morphism can be explicitly described by the rule (up to signs):
\begin{equation}\label{B1}
\Delta_1(\Gamma)=\sum_{b\in B(\Gamma)} \sum_{c_1,c_2\in C(b)}e_{c_1 c_2}(\Gamma).
\end{equation}
The following fact will be important to us:
\begin{Lemma}\label{bd}The morphism $\Delta_1$ satisfies the following properties:
$$
\delta\Delta_1+\Delta_1\delta=0,\qquad \Delta_1^2=0.
$$
\end{Lemma}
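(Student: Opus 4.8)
The plan is to obtain both identities at once by splitting the single relation $\delta_s^{2}=0$ into its homogeneous components for a grading of $\textsf{RGC}_d^{\bullet}$ under which $\delta$ and $\Delta_1$ have different degrees.

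First I would recall what is already in hand. Since $s\colon\Lambda\textsf{LieB}_{d,d}\longrightarrow\textsf{RGra}_d$ is a morphism of properads, the element $e+\Gamma_1$ — where $e$ denotes the edge ribbon graph (two vertices joined by a single edge) and $\Gamma_1$ the loop ribbon graph (one vertex, one loop edge, two boundary components) — is a Maurer--Cartan element of the convolution DG-Lie algebra $\mathrm{Def}(\Lambda\textsf{LieB}_{d,d}\longrightarrow\textsf{RGra}_d)$; by \cite{MV1,MV2} this is precisely the assertion that $\delta_s:=[-,\,e+\Gamma_1]$ is a differential, i.e.\ $\delta_s^{2}=0$. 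Writing $\delta_s=\delta+\Delta_1$ with $\delta=[-,e]$ and $\Delta_1=[-,\Gamma_1]$ and expanding,
\begin{equation}\label{eq:expand-delta-s}
0=\delta_s^{2}=\delta^{2}+(\delta\Delta_1+\Delta_1\delta)+\Delta_1^{2},
\end{equation}
so it suffices to show that the three summands on the right lie in distinct graded pieces.

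Next I would grade $\textsf{RGC}_d^{\bullet}$ by the number of boundary components: let $\textsf{RGC}_d^{\bullet}[b]$ be the part spanned by the oriented ribbon graphs $\Gamma$ with $|B(\Gamma)|=b$. The claim is that $\delta$ preserves each $\textsf{RGC}_d^{\bullet}[b]$, while $\Delta_1$ maps $\textsf{RGC}_d^{\bullet}[b]$ into $\textsf{RGC}_d^{\bullet+1}[b+1]$. For $\delta$: vertex splitting is a homeomorphism of the associated bordered surface $\Sigma_{\Gamma}$, so it preserves both $g(\Gamma)$ and $|E(\Gamma)|-|V(\Gamma)|$, hence also $|B(\Gamma)|=2-2g(\Gamma)+|E(\Gamma)|-|V(\Gamma)|$. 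For $\Delta_1$: each summand $e_{c_1c_2}(\Gamma)$ in \eqref{B1} has $|E|+1$ edges, $|B|+1$ boundaries and the same genus as $\Gamma$. Therefore in \eqref{eq:expand-delta-s} the operator $\delta^{2}$ is homogeneous of boundary-degree $0$, $\delta\Delta_1+\Delta_1\delta$ of boundary-degree $+1$, and $\Delta_1^{2}$ of boundary-degree $+2$; since their sum is the zero operator, each of them vanishes separately. This yields $\delta\Delta_1+\Delta_1\delta=0$ and $\Delta_1^{2}=0$ (and, as a byproduct, re-proves $\delta^{2}=0$).

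I do not expect a genuine obstacle: the only points needing (routine) care are the two bookkeeping statements above — that vertex splitting and the corner-joining operation $e_{c_1c_2}$ act as claimed on the pair $(g(\Gamma),|B(\Gamma)|)$ — together with the orientation signs, which are already absorbed into the definitions of $\delta$ and $\Delta_1$. If one preferred a hands-on argument, $\Delta_1^{2}=0$ can instead be checked directly from \eqref{B1}: expanding twice, the iterated terms $e_{c_3c_4}(e_{c_1c_2}(\Gamma))$ in which the two new edges are attached to boundaries not produced by one another cancel in pairs related by the transposition of those edges, and the residual cancellations are, under the vertex--boundary involution of \cite{CFL} (which interchanges $\Delta_1$ with $\delta$ and $\Gamma_1$ with $e$), exactly the known identity $\delta^{2}=0$; one then recovers $\delta\Delta_1+\Delta_1\delta=0$ from \eqref{eq:expand-delta-s}. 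The grading argument is, however, shorter and cleaner.
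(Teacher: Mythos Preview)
Your argument is correct. The key observation that $\delta$ preserves $|B(\Gamma)|$ while $\Delta_1$ raises it by one is exactly right, and separating the identity $\delta_s^{2}=0$ into its homogeneous pieces for this grading cleanly yields both claims.

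The paper does not give a detailed proof; it only remarks that the two identities ``come from the coJacobi identity and a `one cocycle property of a cobracket'\,''. That is the same content viewed from the other side: the relations in $\Lambda\textsf{LieB}_{d,d}$ (Jacobi, coJacobi, and the Drinfeld compatibility between bracket and cobracket) are precisely what makes $s$ a morphism of properads, hence what makes $e+\Gamma_1$ a Maurer--Cartan element and $\delta_s^{2}=0$. Your boundary-grading then unpacks this single equation back into its three constituents, with coJacobi corresponding to $\Delta_1^{2}=0$ and the compatibility relation to $\delta\Delta_1+\Delta_1\delta=0$. So the two approaches are equivalent in substance; yours has the advantage of being self-contained and not requiring the reader to trace how each individual properad relation propagates through the deformation complex, while the paper's phrasing makes the provenance of each identity more transparent.
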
 
These properties come from the coJacobi identity and a "one cocycle property of a cobracket". Moreover, the Bridgeland differential preserves a genus of a ribbon graph. Hence we have a natural product decomposition:
\begin{equation}\label{ribtot}
\textsf{RGC}_d^{\hdot}(\delta+\Delta_1):=\prod_{g=0}^{\infty} \textsf B_g\textsf{RGC}_d^{\hdot}(\delta+\Delta_1).
\end{equation}
Where we have used the notation $\textsf B_g\textsf{RGC}_d^{\hdot}(\delta+\Delta_1)$ for the direct summand in $\textsf{RGC}_d^{\hdot}(\delta+\Delta_1),$ which consists of ribbon graphs with genus equal to $g.$
\begin{remark}
Definition \eqref{B1} of the morphism $\Delta_1$ was first given by T. Bridgeland \cite{Brid} in the beginning of $00'$s. According to $\textit{ibid.}$ the operation $\Delta_1$ appeared in studying of $N=(2,2)$ SCFT as the dual differential to the standard vertex splitting differential $\delta.$ Namely there is an involution on ribbon graphs given by interchanging vertices with boundaries (see \cite{MW} and \cite{CFL}). Under this involution, the differential $\delta$ corresponds to the Bridgeland differential $\Delta_1.$ Later S. Merkulov and T. Willwacher \cite{MW} rediscovered this operation.\footnote{In our work we also follow the notation from \cite{MW}.}
\end{remark}

\section{Moduli spaces of stable bordered surfaces}

\subsection{Bordered surfaces} There are two classical ways to relate moduli spaces of ribbon graphs to moduli spaces of algebraic curves. One is due to D. Mumford \cite{Mum} \cite{Harer} based on the technique of Jenkins-Strebel differentials \cite{St}, which was later popularised by M. Kontsevich \cite{Kon3} and the second one is due to R. Penner \cite{Pen} based on the hyperbolic geometry. For us, it will be convenient to use the third approach due to K. Costello \cite{Cost}:
\par\medskip
Let $I, K,$ and $J$ be finite sets such that $K\neq \emptyset$ and $\pi\colon I\rightarrow K$ be a morphism of finite sets. Following M. Liu \cite{Liu} For $g\geq 0$ such that $4g+2|K|-4+2|I|+|J|>0$ we denote by $\overline{\mathcal N}_{g,K,\pi,J}^L$ the moduli space of stable bordered surfaces of genus $g$ with $K$-labelled boundaries, $I$-labelled marked points on the boundary such that exactly points $\pi^{-1}(k)$ attached to $k$-boundary and $J$-labelled marked points in the interior of the surface. Here we assume that singularities can be of three types:
\par\medskip
\begin{enumerate}[(a)]
  \item Singularities on the boundary locally isomorphism to:
$$
x^2-y^2=0\quad \mbox{(real node of type one)}.
$$
\par\medskip
 \item Singularities on the boundary i.e. locally isomorphism to:
$$
x^2+y^2=0\quad \mbox{(real node of type two)}.
$$
\par\medskip
  \item Singularities in the interior locally isomorphism to:
  $$xy=0\quad \mbox{(complex singularities)}.$$
\end{enumerate}
$\overline{\mathcal N}_{g,K,\pi,J}^L$ is an orbifold with corners of dimension $6g-6+3|K|+|I|+2|J|.$ The open locus of smooth bordered surfaces will be denoted by $\mathcal N_{g,K,\pi,J}\hookrightarrow \overline{\mathcal N}_{g,K,\pi,J,}^L.$ Note that $Int(\overline{\mathcal N}_{g,K,\pi,J,}^L)$ is given by locus of stable bordered surfaces with at most complex singularities. We will use the notation $\overline{\mathcal N}_{g,|K|,|I|,|J|}^L$ for the following orbifold:
$$
 \overline{\mathcal N}_{g,|K|,|I|,|J|}^L:=\left(\coprod_{\pi\colon I\rightarrow K} \overline{\mathcal N}_{g,K,\pi,J,}^L\right)/ B.
$$
Where the group $B$ is defined by the following rule:
$$
B:=\prod_{k\in K}\mathbb Z_{|\pi^{-1}(k)|}\rtimes \mathrm {Aut}(K)\times \mathrm {Aut}(J).
$$
Note that the quotient orbifold ${\mathcal N}_{g,|K|,|I|,|J|}^L$ is non-orientable, and the corresponding orientation sheaf $or_{{\mathcal N}^L_{g,|K|,|I|,|J|}}$ is defined by the following rule. We consider a one-dimensional representation of a group $B$ defined by the rule (cf. \cite{HVZ}):
\begin{enumerate}
\item It acts as a trivial representation of  $\mathrm {Aut}(J),$
\item It acts as $(-1)^{|\pi^{-1}(k)|-1}$ on generators of cyclic groups,
\item it acts as $(-1)^{(|\pi^{-1}(k)|-1)(|\pi^{-1}(p)-1)}$ on generators of $\mathrm {Aut}(K).$
\end{enumerate} 
Hence the orientation sheaf for the orbifold with corners $\overline{\mathcal N}^L_{g,|K|,|I|,|J|}$ is defined as a $!$-extension of the orientation sheaf $or_{{\mathcal N}^L_{g,|K|,|I|,|J|}}.$ We will denote by $\epsilon_n$ a local system on the orbifold  $\overline{\mathcal N}_{g,n}^L$ which is given by the sign representation of the symmetric group $\Sigma_n.$\footnote{$*$-restrictions of $\epsilon_n$ to different loci in Liu's spaces will be denoted by the same symbol.} We have the canonical quasi-isomorphism:
\begin{equation}\label{orsys1}
\epsilon_n\overset{\sim}{\longrightarrow}\mathbf R(int_*)int^*\epsilon_n
\end{equation} 
This follows from the vanishing of the $!$-restriction of the local system $\epsilon_n$ to a boundary $\partial \overline{\mathcal N}_{g,n}^L$ and the recollement technique for sheaves (\cite{SGA4} Exposé IV). Note that we have an isomorphism of local systems: 
\begin{equation}\label{orsys}
int^*\epsilon_n\cong or_{{\mathcal N}_{g,n}}
\end{equation} 
By $\overline{\mathcal N}_{g,n,k,p}\subset\overline{\mathcal N}_{g,n,k,p}^L$ we will denote the locus of stable bordered surfaces with at most real singularities of the first type, in the sense of K. Costello \cite{Cost}. This is an orbifold with corners of dimension $6g-6+3n+k+2p$  Since, most of the time, we will be interested in surfaces without cusps and marked points on the boundary, we will use the notation $\overline{\mathcal N}_{g,n}$ (resp. ${\mathcal N}_{g,n}$) for $\overline{\mathcal N}_{g,n,0,0}$ (resp. ${\mathcal N}_{g,n,0,0}.$) 
\par\medskip
Analogous to the case of Deligne-Mumford moduli spaces we have morphisms that forget markings. In the case of Liu's moduli spaces, we have morphisms of two types:
\begin{equation}\label{forr}
\pi^{real}_{k}\colon \overline{\mathcal N}_{g,n,k+1,p}^L\longrightarrow \overline{\mathcal N}_{g,n,k,p}^L,
\end{equation}
This morphism forgets the real marked point and stabilise the resulting curve. We also have the complex counterpart of \eqref{forr}:
\begin{equation}\label{forc}
\pi^{comp}_p\colon \overline{\mathcal N}_{g,n,k,p+1}^L\longrightarrow \overline{\mathcal N}_{g,n,k,p}^L
\end{equation}
This morphism forgets the complex marked point and stabilise the resulting curve in the following sense. The possibilities for the nodal surface $\Sigma_{g,n,k,p+1}$ to be non-stable stable after removing a marked point are analogous to the Deligne-Mumford moduli stack \cite{KN}. The first possibility is that $\Sigma_{g,n,k,p+1}$ contains a component $\Sigma_{x,y,z}$ of genus zero with three marked points $x,y$ and $z$ such that $x,y$ are pre-images of nodes under the normalisation map. In that case, we contract this component and reconnect the resulting surface at marked points, being mates of points $x$ and $y.$ The second possibility is that  $\Sigma_{g,n,k,p+1}$ contains a component $\Sigma_{0,x,z}$ of genus zero with two marked points $x$ and $z$ such that $x$ is a pre-image of a node and one boundary component (we allow a boundary component to be a cusp). In that case, we contract this component, and a replace marking, which is a mate of $x$ by a cusp boundary. Note that this map \textit{is not} a universal curve, as it easy to see from its construction. We will use the following replacement: 

Denote by $\widetilde{\mathcal N}_{g,n,k,p+1}^L$ the closed locus in  $\overline{\mathcal N}_{g,n,k,p+1}^L,$ which consists of surfaces without rational component with one boundary of non zero length and two markings such that one of them is a pre-image of the node under the normalisation. We have the induced morphism:
\begin{equation}\label{forcc}
\widetilde{\pi}^{comp}_p\colon \widetilde{\mathcal N}_{g,n,k,p+1}^L\longrightarrow \overline{\mathcal N}_{g,n,k,p}^L
\end{equation}
This map will be proper and defines a universal curve. Let us consider some examples which we will be interested in later on. We will use the following notation for the pullback of morphism \eqref{forcc} along the inclusion of the Costello moduli space:
\begin{equation}\label{forc1}
\pi_{\overline{\mathcal N}_{g,n,0,1}}\colon \overline{\mathcal N}_{g,n,0,1}\longrightarrow \overline{\mathcal N}_{g,n}.
\end{equation}
We will also use the notation:
\begin{equation}\label{forr1}
\pi_{\overline{\mathcal N}_{g,n,2,0}}\colon \overline{\mathcal N}_{g,n,2,0}\longrightarrow \overline{\mathcal N}_{g,n},
\end{equation}
for the pullback of the composition of morphisms \eqref{forr} along the inclusion of the Costello moduli space.

\begin{remark} The moduli spaces of stable bordered surfaces in the sense of Liu are closely related to the moduli spaces of stable real algebraic curves ${}^{\mathbb R}\overline{\mathcal M}_{g,k,2p}$ (see \cite{Cey}, \cite{GM}), \cite{Kap}, \cite{Sep}, \cite{Sil}). The latter classifies stable algebraic curves of genus $g$ with $k$ marked real points and $2p$ marked complex conjugate points. Namely starting with a bordered Riemann surface $\Sigma\in \mathcal N_{g,K,\pi,J}$ we can always associate the complex algebraic curve $\Sigma_{\mathbb C}:=\Sigma \cup_{\partial \Sigma} \overline{\Sigma},$ called the complex double of $\Sigma.$ The complex curve $\Sigma_{\mathbb C}$ has the complex anti-involution $i\colon \Sigma_{\mathbb C}\longrightarrow \Sigma_{\mathbb C}$
and therefore defines the real algebraic curve with some additional structures (labelling of irreducible components (ovals) of $\Sigma_{\mathbb C}(\mathbb R),$ a choice of the "orientation" of $\Sigma_{\mathbb C}(\mathbb R)$ \cite{FOOO}\cite{Cost} (cf. Appendix). This construction extends to a morphism:
\begin{equation}\label{realm}
D\colon \overline{\mathcal N}_{g,n,k,p}^L\longrightarrow {}^{\mathbb R}\overline{\mathcal M}_{2g+n-1,k,2p}
\end{equation}
\end{remark}

\subsection{Shrinking morphisms}

Denote by $\widetilde{\mathcal N}_{g,n}\subset \overline{\mathcal N}_{g,n}^L$ the locus of stable bordered surfaces without real nodes of (a) one and complex nodes (c). The moduli space $\widetilde{\mathcal N}_{g,n}$ is an orbifold with corners of dimension $6g-6+3n$ with an interior being $b\colon \mathcal N_{g,n}\hookrightarrow \widetilde{\mathcal N}_{g,n}.$ By the construction we have an inclusion $a\colon \mathcal M_{g,n}/\Sigma_n\hookrightarrow \widetilde{\mathcal N}_{g,n},$ given by replacing all marked points with cusps. We have the following:

\begin{Th}\label{wp} The inclusion above induces the weak-homotopy equivalence of topological stacks:

$$
a\colon \mathcal M_{g,n}/\Sigma_n\overset{\sim}{\longrightarrow}\widetilde{\mathcal N}_{g,n}.
$$

\end{Th}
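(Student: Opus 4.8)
\emph{Proof proposal.} The plan is to realise $a$ as the inclusion of the deepest corner of the orbifold with corners $\widetilde{\mathcal N}_{g,n}$ and to deformation--retract $\widetilde{\mathcal N}_{g,n}$ onto that corner by shrinking all boundary geodesics to cusps along Weil--Petersson geodesics. First I would pass to Teichm\"uller covers: write $\mathcal T^{L}_{g,n}$ for the Weil--Petersson completion of the Teichm\"uller space of smooth bordered surfaces of type $(g,n)$, so that $\overline{\mathcal N}^{L}_{g,n}=\mathcal T^{L}_{g,n}/\mathrm{Mod}$ for the mapping class group $\mathrm{Mod}$ (acting by isometries and permuting the $n$ unlabelled boundaries), and let $\widetilde{\mathcal T}_{g,n}\subset\mathcal T^{L}_{g,n}$ be the preimage of $\widetilde{\mathcal N}_{g,n}$, i.e. the locus of structures whose only degenerations are boundary geodesics pinched to cusps. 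The interior of $\widetilde{\mathcal T}_{g,n}$ is the Teichm\"uller space of $\mathcal N_{g,n}$, and its deepest stratum --- where all $n$ boundaries have become cusps --- is $\mathrm{Mod}$-equivariantly identified with the Teichm\"uller space $\mathcal T_{g,n}$ of the $n$-punctured surface, the identification covering $a$. It therefore suffices to build a $\mathrm{Mod}$-equivariant strong deformation retraction of $\widetilde{\mathcal T}_{g,n}$ onto $\mathcal T_{g,n}$: dividing by the properly discontinuous action of $\mathrm{Mod}$ (with finite stabilisers) turns an equivariant homotopy equivalence of Teichm\"uller covers into a weak homotopy equivalence of the associated topological stacks.

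For the retraction itself I would invoke the $CAT(0)$ geometry of the Weil--Petersson completion recalled in the Appendix: $\mathcal T^{L}_{g,n}$ is complete $CAT(0)$, the mapping class group acts on it by isometries, and the closure $\overline{\mathcal T}_{g,n}$ of $\mathcal T_{g,n}$ in $\mathcal T^{L}_{g,n}$ --- the augmented Teichm\"uller space of the punctured surface --- is a closed, geodesically convex, hence complete $CAT(0)$, subspace. The Weil--Petersson nearest--point projection $r\colon\mathcal T^{L}_{g,n}\to\overline{\mathcal T}_{g,n}$ is then well defined, $1$-Lipschitz and $\mathrm{Mod}$-equivariant, and setting $H(x,t)$ equal to the point at parameter $t$ on the constant--speed Weil--Petersson geodesic from $x$ to $r(x)$ produces an equivariant deformation, with $H(\cdot,0)=\mathrm{id}$ and $H(\cdot,1)=r$, fixing $\mathcal T_{g,n}$ pointwise. (Equivalently one may run the downward gradient flow of the $\mathrm{Mod}$-invariant convex function $x\mapsto\sum_{i=1}^{n}\ell_{\partial_i}(x)^{2}$, whose zero locus is precisely $\mathcal T_{g,n}$.)

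The substantive step --- and what I expect to be the main obstacle --- is to verify that this retraction does not leave $\widetilde{\mathcal T}_{g,n}$: one must show that for $x\in\widetilde{\mathcal T}_{g,n}$ the nearest point $r(x)$ actually lies in the smooth stratum $\mathcal T_{g,n}$ and not merely in $\overline{\mathcal T}_{g,n}$, and that the whole geodesic $t\mapsto H(x,t)$ avoids the strata carrying an interior node (a complex node of type (c)) or a real node of type (a). The available tools are Wolpert's strict convexity of geodesic--length functions along Weil--Petersson geodesics --- which forces the peripheral lengths $\ell_{\partial_i}$ to be nonincreasing along $H(x,\cdot)$, being convex with terminal value $0$, while the length of any non--peripheral simple closed curve stays positive because the functions $\ell_{\partial_i}^{2}$ governing the flow extend smoothly across its pinching locus and hence exert no pull toward it --- together with the product structure and convexity of stratum closures in the augmented space. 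Granting this, $H$ descends to a strong deformation retraction of $\widetilde{\mathcal N}_{g,n}$ onto $a(\mathcal M_{g,n}/\Sigma_n)$, so $a$ is a weak homotopy equivalence. The genuine work, for which the Weil--Petersson geometry of the Appendix is needed, is thus this no--interior--pinching claim, together with the care required to run the $CAT(0)$ argument on an orbifold with corners rather than on a smooth manifold.
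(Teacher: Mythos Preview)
Your approach is essentially the paper's: pass to the Teichm\"uller cover, use the $CAT(0)$ Weil--Petersson nearest-point projection onto the closed convex stratum $\overline{{}^B\mathcal T(\sigma)}$ where all boundary curves are pinched, and identify the main issue as showing this projection carries $\widetilde{\mathcal T}_{g,n}$ into the \emph{open} stratum ${}^B\mathcal T(\sigma)\cong\mathcal T_{g,n}$. The paper handles this via a distance-comparison lemma (invoking Wolpert and following Fujiwara) asserting $d_{WP}(x,\mathcal T(\sigma_l)^\tau)<d_{WP}(x,\mathcal T(\sigma_p)^\tau)$ for $l<p$, which is the precise form of your convexity heuristic.

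There is one packaging difference worth noting. You aim to build a deformation retraction of $\widetilde{\mathcal T}_{g,n}$ onto $\mathcal T_{g,n}$ directly along Weil--Petersson geodesics, which obliges you to verify that the geodesic from $x$ to $r(x)$ never leaves $\widetilde{\mathcal T}_{g,n}$. Your convexity argument for this is not quite complete: a nonnegative strictly convex function can vanish at an interior point while being positive at both endpoints (e.g.\ $(t-\tfrac12)^2$ on $[0,1]$), so strict convexity of $\ell_\alpha$ alone does not prevent a non-peripheral curve from pinching along the way. The paper sidesteps this step entirely: it restricts the projection to the interior, obtaining an equivariant map ${}^B\mathcal T_{g,n}\to{}^B\mathcal T(\sigma)$ between contractible spaces with properly discontinuous $MCG^B_{g,n}$-actions (after checking $MCG^B_{g,n,\sigma}=MCG^B_{g,n}$), descends to $\Pi\colon\mathcal N_{g,n}\xrightarrow{\sim}\mathcal M_{g,n}/\Sigma_n$, and then invokes two-out-of-three on the triangle
\[
\mathcal M_{g,n}/\Sigma_n\xrightarrow{a}\widetilde{\mathcal N}_{g,n}\xleftarrow{b}\mathcal N_{g,n},
\]
using that $b$ (inclusion of the interior) is already a weak equivalence and $\Pi$ is a one-sided inverse to $a$. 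This avoids tracking geodesics through the corners. Your route becomes the paper's if you first retract $\widetilde{\mathcal T}_{g,n}$ onto its interior via a collar and only then project; the two-out-of-three manoeuvre is the cleaner way to record that.
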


\begin{proof} Let $\Sigma_{g,n}$ be a topological surface of genus $g$ and $n$-boundary components. We have the corresponding augmented Teichmüller space ${}^B\overline{\mathcal T}_{g,n}$ (See Appendix Definition \ref{tb4}). We have a canonical morphism $p\colon {}^B\overline{\mathcal T}_{g,n}\longrightarrow \overline {\mathcal N}_{g,n}^L.$ Denote by $\sigma\in C(D\Sigma_{g,n})$ the $n$-simplex which is given by curves lying on the real part of the surface i.e. stable under the diffeomorphism $\tau.$ The Weil-Petersson metric provides us with nearest point projection morphism \eqref{ortp3}:
\begin{equation}\label{ortp4}
\Pi_{{}^B\overline{\mathcal T({\sigma})}}\colon   {}^B\overline{\mathcal T}_{g,n}\longrightarrow {}^B\overline{\mathcal T(\sigma)}.
\end{equation}
Denote by  ${}^B\widetilde{\mathcal T}_{g,n}$ the locus in  ${}^B\overline{\mathcal T}_{g,n}$ which consists of surfaces $(X,f)$ such that $\ell_{\alpha}(X)=0$ iff $\sigma_k$ for any $k\geq 0$, here $\sigma_k$ we denote the $k$-simplex with curves lying in the real part of the surface $D\Sigma_{g,n},$ in particular for $k=0$ we assume that this set is empty. 

\begin{Lemma} The pullback of the nearest point projection \eqref{ortp4} along the inclusion ${}^B{\mathcal T(\sigma)}\hookrightarrow {}^B\overline{\mathcal T(\sigma)}$ is equivalent to ${}^B\widetilde{\mathcal T}_{g,n}:$
\begin{equation*}\label{}
\begin{diagram}[height=2.3em,width=2.1em]
{}^B\widetilde{\mathcal T}_{g,n} &  & \rTo^{}   & &  {}^B\overline{\mathcal T}_{g,n}&  \\
\dTo_{}  & &  &  &  \dTo_ {\Pi_{{}^B\overline{\mathcal T({\sigma})}}}  && \\
{}^B{\mathcal T(\sigma)} &    & \rTo^{} &   &  {}^B\overline{\mathcal T(\sigma)}   \\
\end{diagram}
\end{equation*}
\end{Lemma} 
\begin{proof} Building on Theorem $4.18$ from \cite{wolp1} and following the scheme of Lemma $4.4$ from \cite{Fuj}  (see also \cite{Yam2}) one can show that for $x\in \mathcal T(\sigma_k)^{\tau}$ and for any natural numbers $l$ and $p$ such that $k<l<p,$ the Weil-Petersson metric satisfies the property:
$$
d_{WP}(x,\mathcal T(\sigma_l)^{\tau})<d_{WP}(x, \mathcal T(\sigma_p)^{\tau}),\qquad 
$$
Then the desired result follows.

\end{proof} 
Taking the composition of the inclusion ${}^B\mathcal T_{g,n}\hookrightarrow {}^B\widetilde{\mathcal T}_{g,n}$ with the pullback morphism of \eqref{ortp4} we get the morphism:
\begin{equation}\label{ortp5}
\Pi_{{}^B{\mathcal T({\sigma})}}\colon   {}^B{\mathcal T}_{g,n}\longrightarrow {}^B{\mathcal T(\sigma)},
\end{equation}
which defines the homotopy equivalence. Denote by $MCG_{g,n,\sigma}^B$ the subgroup of the mapping class group of the bordered surface which consists of elements $g$ such that $g(\gamma)$ is homotopic to $\gamma$ for any $\gamma\in \sigma$ and $g$ fixes each component of $D\Sigma_{g,n}\setminus \sigma.$ We have the following:
\begin{Lemma} The canonical inclusion of groups $MCG_{g,n,\sigma}^B\longrightarrow MCG_{g,n}^B$ is an isomorphism. 
\end{Lemma}

\begin{proof} 
The proof directly follows from the presentation of the mapping class group in terms of Dehn twists. 

\end{proof}
From this Lemma, we have that morphism \eqref{ortp5} is an $MCG_{g,n}^B$-equivariant morphism between contractible spaces, such that the mapping class group acts properly discontinuous. Hence it induces the homotopy equivalence of the corresponding topological stacks\footnote{Since the group $MCG_{g,n}^B$ acts properly discontinues the corresponding quotient stacks are topological Deligne-Mumford stack \cite{Nohi} Corollary $14.6$}:
\begin{equation}\label{ortp6}
\Pi_{\mathcal M_{g,n}/\Sigma_n}\colon   \mathcal N_{g,n}\cong [{}^B{\mathcal T}_{g,n}/ MCG_{g,n}^B]\overset{\sim}{\longrightarrow} [{}^B{\mathcal T(\sigma)}/ MCG_{g,n}^B]\cong\mathcal M_{g,n}/\Sigma_n
\end{equation}
Recall that a homotopy type\footnote{The homotopy type of a topological stack $\mathcal X$ equipped with an atlas $f\colon U\longrightarrow \mathcal X$ is defined as a homotopy type of the total space of the simplicial space:
$\mathcal X^{\Delta}:=\{U\times_{\mathcal X} \dots\times_{\mathcal X} U\}.$} of a quotient stack $[X/G]$ is given by the Borel construction $EG\times_G X$ \cite{Nohi}. Applying this we get the morphism:
\begin{equation}\label{hompb}
EMCG_{g,n}^B\times_{MCG_{g,n}^B} {}^B{\mathcal T}_{g,n}\longrightarrow EMCG_{g,n}^B\times_{MCG_{g,n}^B} {}^B{\mathcal T(\sigma)}
\end{equation}
Morphism \eqref{hompb} is a homotopy equivalence since \eqref{ortp5} is a homotopy equivalence. 
\par\medskip 
We have the following diagram of topological stacks:
$$
[{}^B{\mathcal T(\sigma)}/ MCG_{g,n}^B]\longrightarrow [{}^B\widetilde{\mathcal T}_{g,n}/MCG_{g,n}^B]\longleftarrow  [{}^B{\mathcal T}_{g,n}/ MCG_{g,n}^B]
$$
Note that the left inverse to the morphism $a:=i_{\sigma}$ is $\widetilde{\Pi}_{{}^B{\mathcal T({\sigma})}}.$ Since the composition $b\circ \widetilde{\Pi}_{{}^B{\mathcal T({\sigma})}}$ is a homotopy equivalence, by two-out-of-three property of weak homotopy equivalences we get that $a$ is a homotopy equivalence as well.

\end{proof}

Applying Lemma \ref{homeqb} to the Theorem above, we give: 
\begin{Def} For every $g\geq 0$ and $n\geq 1$ such that $2g+n-2>0$ we define the \textit{boundary shrinking morphism}:
\begin{equation}\label{shr1}
\rho_{\mathcal M_{g,n}/\Sigma_n}\colon C_c^{\hdot}(\mathcal N_{g,n},\mathbb Q)\overset{\sim}{\longrightarrow} C_c^{\hdot-n}(\mathcal M_{g,n}/\Sigma_n,\epsilon_n)\end{equation}
as the zigzag quasi-isomorphism:
$$
 C_c^{\hdot}(\mathcal M_{g,n}/\Sigma_n,\epsilon_n)\overset{a_!}{\overset{\sim}{\longrightarrow}} C_c^{\hdot+n}(\widetilde{\mathcal N}_{g,n},\mathbb D(\epsilon_n))\overset{b_!}{\overset{\sim}{\longleftarrow}} C_c^{\hdot+n}({\mathcal N}_{g,n},\mathbb Q)
$$
\end{Def}

\begin{remark}\label{topbsh} Let us explain the coefficients in the definition of the boundary shrinking morphism. We consider the sign local system $\epsilon_n$ on Liu's moduli space of stable curves $\widetilde{\mathcal N}_{g,n}.$ By the construction above, we have a quasi-isomorphism:
$$
C_{\hdotc}(\mathcal M_{g,n}/\Sigma_n,\epsilon_n)\overset{a_*}{\overset{\sim}{\longrightarrow}} C_{\hdotc}(\widetilde{\mathcal N}_{g,n},\epsilon_n)\overset{b_*}{\overset{\sim}{\longleftarrow}} C_{\hdotc}({\mathcal N}_{g,n},\epsilon_n)$$ 
Applying Poincaré-Verdier duality on both sides and noticing that $\mathcal M_{g,n}/\Sigma_n$ is orientable while the orientation sheaf for the interior $\mathcal N_{g,n}$ coincides with a sign local system \eqref{orsys} we get the desired result. Moreover, the boundary shrinking morphism can be identified with the Gysin pushforward along the "topological shrinking morphism" $\Pi_{\mathcal M_{g,n}/\Sigma_n}.$

\end{remark}

\begin{remark} Note that morphism \eqref{shr1} can be generalised to the case of moduli spaces $\mathcal N_{g,I,\emptyset,J}$ (moduli spaces of smooth bordered surfaces of genus $g$ with $I$-boundary components and $J$ marked points in the interior). The constructions from the proof of Theorem \ref{wp} can be generalised to this case since the Weil-Petersson metric exists for the moduli spaces with marked points (we replace non-marked Teichmüller space with the marked one in Definition \ref{tb1}). More precisely, we define the shrinking morphism in the case of the moduli spaces of non-labelled boundaries and labelled marked points and take a pullback along the corresponding quotient covering. Hence we get a morphism:
\begin{equation}\label{shr}
\rho_{\mathcal M_{g,I\sqcup J}}\colon C_c^{\hdot}(\mathcal N_{g,I,0,J},\mathbb Q)\overset{\sim}{\longrightarrow} C_c^{\hdot-|I|}(\mathcal M_{g,I\sqcup J},\mathbb Q)
\end{equation}

\end{remark}

\begin{remark} There is an alternative (implicit) way to show a homotopy equivalence between $\mathcal M_{g,n}$ and $\mathcal N_{g,[n]}.$ Consider the moduli space of smooth algebraic curves with a choice of a parametrisation around each marked point $\mathcal M_{g,n^{\infty}}$ \cite{Kon5} \cite{BS}. This moduli space is a smooth pro-algebraic variety, equipped with a forgetful morphism $\mathcal M_{g,n^{\infty}}\longrightarrow \mathcal M_{g,n}$ which is $\mathrm {Aut}(\mathbf D)^{\times n}$-torsor, where $\mathbf D:=\mathrm {Spf}(\mathbb C[[z]])$ is a formal disk. Denote by $\mathcal P_{g,[n]}$ the moduli space of smooth bordered surfaces of genus $g$ with $[n]$-labelled analytically parametrised boundaries. This space is also equipped with a forgetful morphism $\mathcal P_{g,[n]}\longrightarrow \mathcal N_{g,[n]}$ which is $\mathrm {Diff}(S^1)^{\times n}$-torsor. Analytic gluing of a marked disk defines a homotopy equivalence $sew\colon \mathcal P_{g,[n]}\longrightarrow \mathcal M_{g,n^{\infty}},$ which is compatible with group actions. Applying the Serre long exact sequence for "stacky" homotopy groups (Theorem $5.2$ \cite{Noohi}) we define a morphism between the "stacky" fundamental groups $\pi_1^{orb}(\mathcal N_{g,[n]})\longrightarrow \pi_1^{orb}(\mathcal M_{g,[n]})$ as being induced by $sew$ and further get the resulting morphism between classifying stacks.

\end{remark}

\subsection{Costello's homotopy equivalence}

Analogous to the case of the Deligne-Mumford moduli stacks the orbifold $\overline{\mathcal N}_{g,n}$ admits a stratification in terms of dual graphs. Fix natural numbers $g\geq 0$ and $n\geq 1$ such that $2g+n-2>0.$ Denote by $J_{2g+n-1}$ a category of stable weighted graphs of genus $2g+n-1$ \cite{KG} \cite{CGP1}. For an element $G\in J_{2g+n-1}$ we will we denote by ${}^{\mathbb R}\overline{\mathcal M}_{2g+n-1}(G)\subset {}^{\mathbb R}\overline{\mathcal M}_{2g+n-1}$  the locus of stable real curves with a dual graph being exactly $G,$ we assume that edges of a graph $G$ correspond to real nodes of type (a). By $\mathcal N(G)\subset \overline{\mathcal N}_{g,n}$
we denote the preimage of a real stratum under morphism \ref{realm}:
$$\mathcal N(G):=D^{-1}\left({}^{\mathbb R}\overline{\mathcal M}_{2g+n-1}(G)\right).$$
Note that for some stable weighted graphs $G$ the corresponding stratum in $\overline{\mathcal N}_{g,n}$ is empty. 
\par\medskip 
We denote by $D_{g,n,k,p}\subset \overline{\mathcal N}_{g,n,k,p}$ the locus of the stable bordered Riemann surfaces with irreducible components given by disks and the number of complex marked points at each component of the normalisation is required to be less or equal to one. Just like in the case of the space $\overline{\mathcal N}_{g,n,k,p}$ we will use the special notation $D_{g,n}$ for $D_{g,n,0,0}.$ Note that $D_{g,n,k,p}$ is the compact topological stack of the following dimension (See the proof of Proposition \ref{expr}):
$$
\dim_{\mathbb R} D_{g,n,k,p}=4g-5+2n+k+2p.
$$
It is naturally stratified by the stratification induced from $\overline{\mathcal N}_{g,n}.$
\begin{Prop}\label{expr} We have the following (well-known) quasi-isomorphism:
$$
 \textsf{RGC}^{\hdot}_d(\delta)_{\geq 3}\overset{\sim}{\longrightarrow}\prod_{g\geq 0,n\geq 1, 2g+n-2>0}^{\infty} C_{\dim \mathcal N_{g,n}-2dg-\hdotc}(D_{g,n},\epsilon_n).
$$
\end{Prop}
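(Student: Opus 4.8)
\emph{Plan of proof.} The plan is to recognise each factor $C_{\dim\mathcal N_{g,n}-2dg-\hdotc}(D_{g,n},\epsilon_n)$ of the right--hand side, via the homological Cousin complex of Proposition~\ref{Cous1}, as the $(g,n)$--graded piece of the Kontsevich--Penner complex $\textsf{RGC}_d^{\hdot}(\delta)_{\geq 3}$; this is in substance the ``dual'' of the Strebel/Penner ribbon graph decomposition of moduli space and is due to K. Costello \cite{Cost} (see also \cite{MW}, \cite{Kon3}).

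First I would set up the stratification of the compact topological stack $D_{g,n}$. A point of $D_{g,n}$ is a stable bordered surface all of whose irreducible components are disks, glued along boundary nodes of type~(a); choosing the orientations of the disks equips the dual graph with a cyclic order of the half--edges at each vertex, hence yields a ribbon graph $\Gamma$ of genus $g$ whose set of boundaries $B(\Gamma)$ is the set of $[n]$--labelled boundary components of the surface, and all of whose vertices have valence $\geq 3$ (stability of a disk forces at least three incident nodes --- which is exactly why we land in the $\geq 3$ summand). Conversely a ribbon graph $\Gamma$ of this kind cuts out a locally closed substack
$$
D_{g,n}(\Gamma)\ \cong\ \Bigl(\,\textstyle\prod_{v\in V(\Gamma)}\mathcal N_{0,1,\mathrm{val}(v),0}\,\Bigr)\big/\mathrm{Aut}(\Gamma),
$$
a product, over the vertices, of the (contractible) moduli of a disk with $\mathrm{val}(v)$ cyclically ordered boundary points. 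Since each factor is a cell of dimension $\mathrm{val}(v)-3$ one gets $\dim D_{g,n}(\Gamma)=\sum_v(\mathrm{val}(v)-3)=2|E(\Gamma)|-3|V(\Gamma)|$, and the Euler relation $|V(\Gamma)|-|E(\Gamma)|=2-2g-n$ rewrites this as $\dim D_{g,n}(\Gamma)=\dim\mathcal N_{g,n}-|E(\Gamma)|$; in particular $\cdim D_{g,n}(\Gamma)=|E(\Gamma)|-(2g+n-1)$, which vanishes for the (unique) one--vertex graph and is maximal, $=4g-5+2n$, exactly for trivalent $\Gamma$, so along the way this pins down $\dim D_{g,n}=4g-5+2n$. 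Finally I would record the incidences: $D_{g,n}(\Gamma')$ lies in $\overline{D_{g,n}(\Gamma)}$ precisely when $\Gamma$ is obtained from $\Gamma'$ by contracting a set of non--loop edges, the codimension--one incidences being the single vertex splittings of $\Gamma$ that keep every valence $\geq 3$.

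Next, since $D_{g,n}$ is compact we have $C_{\hdotc}(D_{g,n},\epsilon_n)=C^{BM}_{\hdotc}(D_{g,n},\epsilon_n)$, and the coarse spaces of the strata are contractible, so Proposition~\ref{Cous1} presents this chain complex with one summand for each ribbon graph $\Gamma$, namely $\bigl(H_0(D_{g,n}(\Gamma),\epsilon_n)\otimes \mathrm{or}_{D_{g,n}(\Gamma)}\bigr)_{\mathrm{Aut}(\Gamma)}$, sitting in chain degree $\dim D_{g,n}(\Gamma)=\dim\mathcal N_{g,n}-|E(\Gamma)|$, with differential the boundary--incidence map; under the re-indexing $\hdot\leftrightarrow\dim\mathcal N_{g,n}-2dg-\hdotc$ this sits in cochain degree $|E(\Gamma)|-2dg$, exactly the degree of $\Gamma$ in $\textsf{RGC}_d^{\hdot}$. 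It then remains to match (i) the one--dimensional coefficient line and (ii) the differential with their ribbon--graph counterparts. For (i): $H_0$ of the contractible stratum is one--dimensional, with $\mathrm{Aut}(\Gamma)$ acting through the sign of its permutation of the $n$ boundaries (this is the effect of $\epsilon_n$), while $\mathrm{or}_{D_{g,n}(\Gamma)}=\bigotimes_v \mathrm{or}\bigl(\mathcal N_{0,1,\mathrm{val}(v),0}\bigr)$ and the orientation of the cell $\mathcal N_{0,1,k,0}$ is canonically $\det$ of the set of corners at $v$; assembling and passing to $\mathrm{Aut}(\Gamma)$--coinvariants, the standard ribbon--graph identities between $\bigotimes_v\det C(v)$, $\det E(\Gamma)$, $\det H(\Gamma)$, $\det V(\Gamma)$ and $\det B(\Gamma)$ identify the result with the orientation line of $\Gamma$ in $\textsf{RGC}_d$ for \emph{both} parities of $d$ --- which is simultaneously the source of the isomorphism $\textsf{RGC}_d\cong\textsf{RGC}_{d'}$ for all $d$ mentioned in the Remark above. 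For (ii): the Cousin differential sends $\Gamma$ to the signed sum over the codimension--one--deeper strata in $\overline{D_{g,n}(\Gamma)}$, i.e.\ to $\sum\pm\,\Gamma'$ over all admissible vertex splittings, which is precisely $\delta$, and the incidence signs read off from the cell orientations reproduce the combinatorial signs of vertex splitting. Identifying $\textsf{RGC}_d^{\hdot}(\delta)_{\geq 3}$ with this Cousin complex for each $(g,n)$ with $2g+n-2>0$ and invoking Proposition~\ref{Cous1} then yields the asserted quasi-isomorphism into $\prod_{g,n}C_{\dim\mathcal N_{g,n}-2dg-\hdotc}(D_{g,n},\epsilon_n)$.

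I expect the orientation/sign bookkeeping of step~(i)--(ii) to be the main obstacle: one must check that the cell orientations $\bigotimes_v\det C(v)$, the sign local system $\epsilon_n$, and the boundary--incidence signs of the Cousin complex together reproduce \emph{on the nose} both ribbon--graph orientation conventions ($\det E(\Gamma)$ for $d$ even, $\det V(\Gamma)\otimes\det B(\Gamma)\otimes\bigotimes_{e}\det H(e)$ for $d$ odd) as well as the signs in $\delta$, and this must be carried out uniformly in $d$, which is exactly the content of the claim that all the complexes $\textsf{RGC}_d$ are mutually isomorphic. A secondary, routine technical point is the orbifold nature of $D_{g,n}$: since $\mathrm{Aut}(\Gamma)$ need not act freely one applies Proposition~\ref{Cous1} to coarse spaces with the $\mathrm{Aut}(\Gamma)$--action recorded in the (twisted) coefficients, which is legitimate because the Postnikov-tower argument underlying that proposition is insensitive to passing to coarse spaces.
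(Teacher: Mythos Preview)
Your proposal is correct and follows essentially the same route as the paper: stratify $D_{g,n}$ by ribbon graphs, identify each stratum with $\bigl(\prod_v\mathcal N_{0,1,|H_v|,0}\bigr)/\mathrm{Aut}(\Gamma)$, apply the Cousin complex of Proposition~\ref{Cous1}, and match degrees, orientation lines, and the boundary map with $\delta$.

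The only substantive difference is in the orientation bookkeeping of your step~(i), which you correctly flag as the main obstacle but leave to ``standard ribbon--graph identities''. The paper carries this out explicitly: it first identifies the orientation line of the stratum with $\det V(\Gamma)\otimes\det H(\Gamma)\otimes\det B(\Gamma)$ (via $H_0^{BM}$ of each cell $\mathbb R^{|H_v|-3}$ together with the $\epsilon_n$--twist), and then proves the isomorphism $\det E(\Gamma)\cong\det V(\Gamma)\otimes\det B(\Gamma)\otimes\bigotimes_e\det H_e(\Gamma)$ by taking determinants of the short exact sequences extracted from the cellular chain complex of the \emph{closed} surface $\widehat\Sigma_\Gamma$ obtained by capping off the boundaries, using that $H_0$, $H_2$ are canonically one--dimensional and $H_1$ has a canonical symplectic basis. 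This is the concrete mechanism behind your ``standard identities'' and simultaneously proves the parity--independence statement you allude to.
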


\begin{proof} Denote by $J_{2g+n-1}^{w=0}$ the subcategory of the category $J_{2g+n-1}$ which consists of stable weighted graphs with zero weights assigned to vertices i.e. vertices are at least trivalent. For an element $G\in J_{2g+n-1}^{w=0}$ the stratum $\mathcal N(G)$ admits the following description: 
\begin{equation}\label{strat1}
\mathcal N(G)\cong \left(\prod_{v\in V(G)} \mathcal N_{0,1,H_{v},0}\right)/\mathrm {Aut}(G). 
\end{equation}
Where by $H_v$ we denote the set of half-edges attached to a vertex $v.$ Note that $\dim \mathcal N(G)=H(G)-3V(G).$ Applying the Cousin complex (Proposition \ref{Cous1}) we get that the complex $C_{\hdotc}(D_{g,n},\epsilon_n)$ is quasi-isomorphic to:
$$
\bigoplus_{\cdim \mathcal N(G)=0}  C_{\hdotc}^{BM}(\mathcal N(G),\epsilon_n) \rightarrow \bigoplus_{\cdim \mathcal N(G)=1}  C_{\hdotc}^{BM}(\mathcal N(G),\epsilon_n)\rightarrow\dots
$$
Note that a moduli space  $\mathcal N_{0,1,H_v,0}$ is naturally equivalent to the configuration space $Conf^{|H_v|}(S^1)/ PSL_2(\mathbb R).$ For $|H_v|\geq 3$ it is easy to see that the configuration space $Conf^{|H_v|}(S^1)/ PSL_2(\mathbb R)$ is homeomorphic to the union of copies of $\mathbb R^{|H_v|-3}$ labelled by number of cyclic orders on the set $H_v$ (cf. \cite{Sta}). Hence each stratum \eqref{strat1} is an orbicell and $\dim D_{g,n}=4g-5+2n$ (since the minimal stratum has dimension $2g+n-1$, corresponding to a stable graph with unique vertex of genus zero and $2g+n-1$ loop attached). The connected components of stratum \eqref{strat1} correspond to the "decorations" of $G$ by a structure of a ribbon graph of genus $g$ with $n$ boundary components.  Note that sections of a local system $\epsilon_n$ over a connected component corresponding to a ribbon graph $\Gamma$ are given by $\det(B(\Gamma)),$ hence we get that entries in the complex above are given by pairs $(\Gamma,or)$ where $\Gamma$ is a ribbon graph (placed in degree $6g-6+3n-|E(\Gamma)|$ and $or$ is an element in the following space:
\begin{align}\label{or1}
\otimes_{v\in V(\Gamma)}&H_0^{BM}(\mathbb R^{|H_v|-3},\mathbb Q)\otimes \det(B(\Gamma))\\
&\cong \det(V(\Gamma))\otimes \det (H(\Gamma))\otimes \det(B(\Gamma)).
\end{align} 
Further following \cite{CFL} (see also \cite{MW}) we explicitly compute:
\par\medskip
Recall that by $H_e(\Gamma)$ we have denoted a set of two half edges contained in the edge $e\in E(\Gamma).$ We can identify canonically $\det(H(\Gamma))\cong \otimes_{e\in E(\Gamma)} \det H_e(\Gamma).$ Consider the bordered Riemann surface $\Sigma_{\Gamma}$ associated with a graph $\Gamma.$ By gluing disks to the boundary components of $\Sigma_{\Gamma}$ we obtain the closed Riemann surface denoted by $\widehat{\Sigma}_{\Gamma}$ Using the cellular decomposition which comes from a ribbon graph $\Gamma$ we have the following complex which computes singular homology of $\widehat{\Sigma}_{\Gamma}$ with $\mathbb Q$-coefficients:
\begin{equation}\label{cellcomp}
\begin{diagram}[height=3em,width=3em]
C_2(\widehat{\Sigma}_{\Gamma},\mathbb Q) &   \rTo^{d_2}   &  C_1(\widehat{\Sigma}_{\Gamma},\mathbb Q) &   \rTo^{d_1}   & C_0(\widehat{\Sigma}_{\Gamma},\mathbb Q)
\end{diagram}
\end{equation}
Here the space of $2$-chains can be identified with space of boundaries $C_2(\widehat{\Sigma}_{\Gamma},\mathbb Q)=\mathbb Q[B(\Gamma)]$, space of $1$-chains with set of half-edges $C_1(\widehat{\Sigma}_{\Gamma},
\mathbb Q)=\summm_{e\in E(\Gamma)} \mathbb Q[H_e(\Gamma)]$ and $0$-chains with vertices $C_0(\widehat{\Sigma}_{\Gamma},\mathbb Q)=\mathbb Q[V(\Gamma)].$ We have the following exact sequences:
$$
0 \rightarrow H_2(\widehat{\Sigma}_{\Gamma},\mathbb Q)\rightarrow C_2(\widehat{\Sigma}_{\Gamma},\mathbb Q)\rightarrow \mathrm {Im}(d_2)\rightarrow 0,
$$

$$
0 \rightarrow \mathrm{Im}(d_2)\rightarrow \mathrm {Ker}(d_1)\rightarrow H_1(\widehat{\Sigma}_{\Gamma},\mathbb Q)\rightarrow 0,
$$

$$
0 \rightarrow \mathrm {Ker}(d_1)\rightarrow C_1(\widehat{\Sigma}_{\Gamma},\mathbb Q)\rightarrow C_0(\widehat{\Sigma}_{\Gamma},\mathbb Q)\rightarrow H_0(\widehat{\Sigma}_{\Gamma},\mathbb Q)\rightarrow 0,
$$
Further, using Lemma $1$ from \cite{CV} and identifications above we have the following isomorphisms:
$$
\det(B(\Gamma))\cong \det(H_2(\widehat{\Sigma}_{\Gamma},\mathbb Q))\otimes \det(\mathrm {Im}(d_2))
$$

$$
\det (\mathrm {Ker}(d_1))\cong \det(\mathrm{Im}(d_2))\otimes \det(H_1(\widehat{\Sigma}_{\Gamma},\mathbb Q))
$$

$$
\det (\mathrm {Ker}(d_1))\otimes \det (V(\Gamma))\cong \det (E(\Gamma))\otimes \bigotimes_e \det(H_e(\Gamma))\otimes \det(H_0(\widehat{\Sigma}_{\Gamma},\mathbb Q))
$$
Note that since spaces $H_0(\widehat{\Sigma}_{\Gamma},\mathbb Q)$ and $H_2(\widehat{\Sigma}_{\Gamma},\mathbb Q)$ are one dimensional and have a canonical basis we can omit them from expressions above. Further vector space $H_1(\widehat{\Sigma}_{\Gamma},\mathbb Q)$ can be equipped with a basis of cycles $\{\gamma_i,\delta_i\}_{i\in K}$ where $|K|=g,$ hence can be also omitted. Hence we have:
\begin{equation}\label{or2}
 \det(E(\Gamma))\cong\det(V(\Gamma))\otimes \det (B(\Gamma))\otimes \bigotimes_e \det(H_e(\Gamma))
\end{equation}
Comparing the right-hand side of \eqref{or2} with the \eqref{or1} we obtain the desired identification of orientation spaces.
\par\medskip
Finally, it is easy to see that the differential $C_{\hdotc}(D_{g,n},\epsilon_n)$ coincides with the differential $\delta$ in the Kontsevich-Penner ribbon graph complex. Hence we get:
$$
C_{6g-6+3n-\hdotc}(D_{g,n},\epsilon_n)\cong C^{\hdot}_c(M_{g,n}^{rib}/\Sigma_n,\mathbb Q)
$$
\end{proof}
\begin{remark} Following \cite{Cost} Proposition \ref{expr} can be extended to the case of moduli spaces of nodal surfaces with markings in the interior and markings on the boundary. Namely, in these cases, we replace the notion of a ribbon graph with a notion of a ribbon graph with black and white vertices (resp. ribbon graphs with hairs), where white vertices (resp. hairs) correspond to components which have a complex marking (resp. marking on a boundary).

\end{remark}

The main result (Theorem $2.2.2$) of \cite{Cost} gives us a \textit{weak equivalence of orbispaces}:\footnote{The fact that $v_{g,n,k,p}$ is an equivalence is obvious, since $\mathcal N_{g,n,k,p}$ is an interior of $\overline{\mathcal N}_{g,n,k,p}.$}
\begin{equation}\label{rib2}
v_{g,n,k,p}\colon \mathcal N_{g,n,k,p}\longrightarrow \overline{\mathcal N}_{g,n,k,p}\longleftarrow D_{g,n,k,p}\colon u_{g,n,k,p}.
\end{equation}
Further, we will use a notation $v$ (resp. $u$) for a morphism $v_{g,n,k,p}$ (resp. $u_{g,n,k,p}$) if it will not lead to confusion. We have the following (well known):
\begin{Prop}\label{costl} For $d\in \mathbb Z$ the Costello weak equivalence together with quasi-isomorphism \eqref{shr1} define the zig-zag quasi-isomorphism:
\begin{equation}
  \textsf{cost}\colon \prod_{g\geq0,n\geq 1, 2g+n-2>0}^{\infty}\big(C_c^{\hdot+2dg-n}(\mathcal M_{g,n},\mathbb Q)\otimes_{{\Sigma_n}} \mathrm {sgn}_n\big)^{\Sigma_n}{\overset{\sim}{\longrightarrow}}\textsf{RGC}_d^{\hdot}(\delta)_{\geq 3}.
\end{equation}

\end{Prop}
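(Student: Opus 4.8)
The plan is to build the zig-zag $\textsf{cost}$ by concatenating quasi-isomorphisms already proved and then carrying out the degree and orientation-system bookkeeping; no new geometric ingredient is required.

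First I would use Proposition \ref{expr} to identify $\textsf{RGC}_d^{\hdot}(\delta)_{\geq 3}$ with $\prod_{g,n}C_{\dim\mathcal N_{g,n}-2dg-\hdotc}(D_{g,n},\epsilon_n)$, i.e.\ with $\epsilon_n$-twisted chains on the maximally degenerate locus $D_{g,n}$. Then I apply Lemma \ref{homeqb} to the two legs of the Costello weak equivalence \eqref{rib2}: since $u_{g,n}\colon D_{g,n}\to\overline{\mathcal N}_{g,n}$ and $v_{g,n}\colon\mathcal N_{g,n}\to\overline{\mathcal N}_{g,n}$ are weak homotopy equivalences and $\epsilon_n$ is a local system, the pushforwards in chains $u_*$ and $v_*$ are quasi-isomorphisms. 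Using $u^*\epsilon_n=\epsilon_n$ together with the identification $v^*\epsilon_n\cong or_{\mathcal N_{g,n}}$ of \eqref{orsys}, this produces the zig-zag $C_{\hdotc}(D_{g,n},\epsilon_n)\overset{u_*}{\overset{\sim}{\longrightarrow}}C_{\hdotc}(\overline{\mathcal N}_{g,n},\epsilon_n)\overset{v_*}{\overset{\sim}{\longleftarrow}}C_{\hdotc}(\mathcal N_{g,n},or_{\mathcal N_{g,n}})$.

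Next I pass to compactly supported cochains by Poincaré--Verdier duality on the orbifold with corners $\mathcal N_{g,n}$ (of dimension $6g-6+3n$): since $\mathbb D(or_{\mathcal N_{g,n}})\cong\underline{\mathbb Q}_{\mathcal N_{g,n}}[\dim\mathcal N_{g,n}]$, we get $C_{\hdotc}(\mathcal N_{g,n},or_{\mathcal N_{g,n}})\cong C_c^{\dim\mathcal N_{g,n}-\hdot}(\mathcal N_{g,n},\mathbb Q)$, so the cochain in cohomological degree $k$ of $\textsf{RGC}_d^{\hdot}(\delta)_{\geq 3}$ is carried to $C_c^{k+2dg}(\mathcal N_{g,n},\mathbb Q)$. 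Finally I compose with the boundary-shrinking quasi-isomorphism \eqref{shr1}, namely $\rho_{\mathcal M_{g,n}/\Sigma_n}\colon C_c^{\hdot}(\mathcal N_{g,n},\mathbb Q)\overset{\sim}{\longrightarrow}C_c^{\hdot-n}(\mathcal M_{g,n}/\Sigma_n,\epsilon_n)$ (itself a consequence of Theorem \ref{wp} and Lemma \ref{homeqb}), and rewrite $C_c^{\hdot}(\mathcal M_{g,n}/\Sigma_n,\epsilon_n)\cong\big(C_c^{\hdot}(\mathcal M_{g,n},\mathbb Q)\otimes_{\Sigma_n}\mathrm{sgn}_n\big)^{\Sigma_n}$ via the transfer for the finite quotient $\mathcal M_{g,n}\to\mathcal M_{g,n}/\Sigma_n$, which is valid because $\mathbb Q$ has characteristic zero. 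Since products of quasi-isomorphisms of complexes of $\mathbb Q$-vector spaces are again quasi-isomorphisms, the whole zig-zag assembles into $\textsf{cost}$, with total degree shift $\hdot+2dg-n$ as computed.

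The only genuinely delicate point is the coherent bookkeeping of orientation and sign data along the zig-zag: one must verify that $\mathrm{sgn}_n$ twisting $H_c^{\hdot}(\mathcal M_{g,n})$, the orientation sheaf $or_{\mathcal N_{g,n}}$ of the interior of Liu's space, the sign system $\epsilon_n$ on $D_{g,n}$ and on $\overline{\mathcal N}_{g,n}$, and the shifts introduced by $\mathbb D$ and by $\rho_{\mathcal M_{g,n}/\Sigma_n}$ all fit together without a residual sign discrepancy between the two ends. The identification \eqref{orsys}, the quasi-isomorphism \eqref{orsys1}, and the description of the orientation sheaf of Liu's moduli space as the $!$-extension of the prescribed one-dimensional $B$-representation are exactly what guarantee this; beyond this (routine but tedious) check I do not expect any obstruction, and the argument is in the end a diagram chase.
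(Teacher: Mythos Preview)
Your proposal is correct and follows essentially the same route as the paper: you concatenate Proposition~\ref{expr}, the pushforwards $u_*$ and $v_*$ along Costello's weak equivalences (via Lemma~\ref{homeqb}), Poincar\'e--Verdier duality on $\mathcal N_{g,n}$, and the boundary-shrinking quasi-isomorphism~\eqref{shr1}, exactly as the paper does. Your added discussion of the orientation/sign bookkeeping (matching $\epsilon_n$, $or_{\mathcal N_{g,n}}$, and $\mathrm{sgn}_n$ via \eqref{orsys} and \eqref{orsys1}) is a welcome elaboration of a point the paper leaves implicit.
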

\begin{proof} Applying Lemma \ref{homeqb} we define the morphism $\textsf{cost}$ as the zigzag quasi-isomorphism:

\begin{align*}
 (C_c^{\hdot-n}(\mathcal M_{g,n},\mathbb Q)\otimes \mathrm{sgn}_n)^{\Sigma_{n}}&\overset{\mathbb D\rho_{\mathcal M_{g,n}/\Sigma_n}}{\overset{\sim}{\longrightarrow}} C_{\dim \mathcal M_{g,n}-\hdotc}(\mathcal N_{g,n},\epsilon_n)\overset{v_*}{\overset{\sim}{\longrightarrow}} C_{\dim \mathcal M_{g,n}-\hdotc}(\overline{\mathcal N}_{g,n},\epsilon_n)\\
 &\overset{u_*}{\overset{\sim}{\longleftarrow}} C_{\dim \mathcal M_{g,n}-\hdotc}(D_{g,n},\epsilon_n){\overset{\sim}{\longleftarrow}} C^{\hdot}_c(M_{g,n}^{rib}/\Sigma_n,\mathbb Q).
\end{align*}

\end{proof}

\section{Proofs of T. Willwacher's and A. Caldararu's conjectures} 

\subsection{T. Bridgeland's differential $\Delta_1^{bor}$}

For every $g\geq 0$ and $n\geq1,$ such that $2g+n-2>0$ by $T_{g,n}$ we denote the locus in $\overline {\mathcal N}_{g,n}$ which consists of stable bordered surfaces such that all irreducible components are disks except the unique component which is an annulus (we also assume that all nodes are connected to the one boundary of the annulus). By $K_{g,n}^L$ we denote the closure of the union of $D_{g,n}$ with $T_{g,n}$ in Liu's moduli space:
$$
K_{g,n}^L:=\overline{T_{g,n}\cup D_{g,n}}\subset \overline{\mathcal N}_{g,n}^L
$$
Note that by the construction the locus $K_{g,n}^L$ is a compact topological stack. We have the following diagram:
$$
\xi_{D_{g,n,0,1}}\colon D_{g,n,0,1}\hookrightarrow K_{g,n+1}^L\hookleftarrow D_{g,n+1}\colon n
$$
Where the morphism $\xi_{D_{g,n,0,1}}$ is defined by replacing a marked point with a cusp and $n$ is a closed inclusion. We have the following:
\begin{Lemma}\label{B0} The push-forward along the morphism $n$ induces the quasi-isomorphism:
$$
n_*\colon C_{\hdotc}(D_{g,n+1},\epsilon_{n+1})\overset{\sim}{\longrightarrow} C_{\hdotc}(K_{g,n+1}^L,\epsilon_{n+1})
$$

\end{Lemma}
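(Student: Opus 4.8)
The plan is to reduce the assertion to a vanishing statement for the open complement, and then to run the Cousin complex. Since $D_{g,n+1}$ and $K^L_{g,n+1}$ are both compact topological stacks, chains and Borel--Moore chains agree on them, and $n_*$ is the pushforward along the closed inclusion $n$, i.e.\ the canonical map of Borel--Moore chains \eqref{bmp1}. Writing $U:=K^L_{g,n+1}\setminus D_{g,n+1}$ for the open complement of the closed substack $D_{g,n+1}$, Lemma \ref{gysin} furnishes a long exact sequence
\[
\cdots\longrightarrow H^{BM}_{\hdotc}(D_{g,n+1},\epsilon_{n+1})\xrightarrow{\,n_*\,}H^{BM}_{\hdotc}(K^L_{g,n+1},\epsilon_{n+1})\longrightarrow H^{BM}_{\hdotc}(U,\epsilon_{n+1})\longrightarrow\cdots,
\]
so it suffices to prove that $H^{BM}_{\hdotc}(U,\epsilon_{n+1})=0$ in all degrees.

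Then I would bring $U$ into combinatorial form. By construction $U$ is the locus of surfaces in $K^L_{g,n+1}$ that are not all-disk: it is swept out from $T_{g,n+1}$ by degenerating the distinguished annulus component --- its free boundary shrinking to a cusp --- without reaching the all-disk locus $D_{g,n+1}$. Stratifying $U$ by dual graphs and applying the Cousin complex of Proposition \ref{Cous1}, exactly as in the proof of Proposition \ref{expr} each disk vertex contributes an orbicell $Conf^{|H_v|}(S^1)/PSL_2(\mathbb R)$, so that $C^{BM}_{\hdotc}(U,\epsilon_{n+1})$ is a combinatorial complex whose generators are ribbon graphs carrying one extra marked ``cylinder''.

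The heart of the argument is the local structure of $K^L_{g,n+1}$ transverse to these cylinder strata. Once all the remaining combinatorial data is fixed, the marked cylinder contributes a single additional real modulus, namely its conformal modulus (extremal length) $t$, ranging over the interior of an interval whose two ends are: the degeneration that turns the cylinder into an all-disk configuration by introducing a real node of type (a) --- the operation of connecting identical corners by an edge of \cite{DHV} --- which lies in $D_{g,n+1}$ and is therefore deleted from $U$; and the degeneration where the free boundary shrinks to a cusp, which remains in $U$. Hence, inside $U$, the cylinder factor is a half-open interval, and I would show that the Cousin differential sends each cylinder stratum \emph{isomorphically} --- up to the sign of the chosen orientation of that half-open interval, which is precisely the odd sign carried by the corner-joining operation in \cite{MW} --- onto the adjacent face obtained by letting $t$ run to the corner-joining end. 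Thus the Cousin complex of $(U,\epsilon_{n+1})$ is the cone of an isomorphism, so $H^{BM}_{\hdotc}(U,\epsilon_{n+1})=0$. (Equivalently, flowing $t$ to the corner-joining end exhibits a deformation retraction of $K^L_{g,n+1}$ onto $D_{g,n+1}$, so that $n$ is a weak homotopy equivalence and the lemma follows at once from Lemma \ref{homeqb}.)

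The hard part will be the collar description just invoked: one must verify, transverse to the annulus strata, that the extremal-length parameter $t$ really extends to a collar coordinate on $K^L_{g,n+1}\subset\overline{\mathcal N}^L_{g,n+1}$, and that the two limiting degenerations glue to the neighbouring strata as claimed. This is the same kind of analysis of the augmented Teichm\"uller space in Fenchel--Nielsen and Weil--Petersson coordinates that underlies Theorem \ref{wp}, together with Costello's explicit local model for the spaces $D_{g,n,k,p}$ \cite{Cost}; granting it, the cancellation of adjacent strata in the Cousin complex --- equivalently, the deformation retraction onto $D_{g,n+1}$ --- together with the sign bookkeeping reproducing the odd sign of the corner-joining map of \cite{MW},\cite{DHV}, is routine.
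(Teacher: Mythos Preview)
Your reduction via the Gysin triangle to the vanishing of $H^{BM}_{\hdotc}(U,\epsilon_{n+1})$, with $U=K^L_{g,n+1}\setminus D_{g,n+1}$, followed by a Cousin resolution exhibiting that complex as the cone of an isomorphism, is exactly the paper's strategy.

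There is, however, a genuine mis-identification in your execution. Inside $U$ the annulus modulus ranges over a half-open interval whose \emph{missing} end is the corner-joining (type~(a) real node) degeneration --- that face lies in $D_{g,n+1}$, which you have excised --- while the \emph{closed} endpoint is the cusp end, where the free boundary of the annulus shrinks. The Cousin differential on $U$ therefore runs from the open annulus strata $T_{g,n+1}$ to the closed cusp strata $T^{cusp}_{g,n+1}\cong D_{g,n,0,1}$; this is the paper's map $\delta'''$. The isomorphism one must verify is between these two graded pieces (both indexed by ribbon graphs with a single distinguished vertex, with orientation data differing only by whether the extra boundary or the extra marking is recorded), not the corner-joining map, and your sign remark about the corner-joining operation of \cite{MW} is accordingly misplaced.

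The paper also sidesteps the collar analysis you flag as the hard part: rather than producing a transverse Fenchel--Nielsen coordinate, it computes $C^{BM}_{\hdotc}(T_{g,n+1},\epsilon_{n+1})$ combinatorially using that the moduli of an annulus with all marked points on one boundary is an open convex cell (from \cite{DHV}), and then checks $\delta'''$ is an isomorphism degree by degree on the resulting ribbon-graph complexes. Your parenthetical deformation-retraction (flow $t$ toward the corner-joining end to retract $K^L_{g,n+1}$ onto $D_{g,n+1}$) is a genuinely different route; the paper notes in a Remark that a Costello-style induction as in Lemma~3.0.7 of \cite{Cost} furnishes such an alternative, but the single-parameter flow you describe is not obviously well-defined, since as $t\to 0$ the annulus can split along several inequivalent arcs.
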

\begin{proof} Consider the Gysin triangle (Lemma \ref{gysin}):
$$
C_{\hdotc}(D_{g,n+1},\epsilon_{n+1})\longrightarrow C_{\hdotc}(K_{g,n+1}^L,\epsilon_{n+1})\longrightarrow C_{\hdotc}^{BM}(K_{g,n+1}^L\setminus D_{g,n+1},\epsilon_{n+1})\longrightarrow
$$
in the Borel-Moore chains associated with the diagram:
$$
n\colon D_{g,n+1}\rightarrow K_{g,n+1}^L\hookleftarrow K_{g,n+1}^L\setminus D_{g,n+1}
$$
In order to prove that the morphism $n_*$ is the quasi-isomorphism it is enough to show that the complex: $$C_{\hdotc}^{BM}(K_{g,n+1}^L\setminus D_{g,n+1},\epsilon_{n+1})$$ is acyclic. We have a sequence of inclusions:
$$
T_{g,n}\longrightarrow K_{g,n+1}^L\setminus D_{g,n+1}\longleftarrow T_{g,n+1}^{cusp}
$$
Here the inclusion $T_{g,n}\hookrightarrow K_{g,n+1}^L$ is open and $T_{g,n+1}^{cusp}$ is the corresponding closed complement. Note that a pullback of a local system $\epsilon_{n+1}$ on $T_{g,n+1}^{cusp}$ under this morphism can be naturally identified with $\epsilon_n$ via the morphism: 
\begin{equation}\label{loc1}
\xi_{D_{g,n,0,1}}^*\epsilon_{n+1}\longrightarrow \epsilon_n,
\end{equation}
defined by the rule: 
$$
\delta'''\colon b'\wedge\dots\wedge b''\wedge \dots\wedge b'''\longrightarrow (-1)^{j}b'\wedge\dots\wedge b'''.
$$
Here $b''$ is a unique boundary without attached nodes placed in the $j$-slot in the sequence above. Morphism \eqref{loc1} is the non-zero morphism between one-dimensional local systems and hence it is the isomorphism. Hence the complex $C_{\hdotc}^{BM}(K_{g,n+1}^L\setminus D_{g,n+1},\epsilon_{n+1})$ is quasi-isomorphic to the mapping cone of the following morphism:
$$
\delta'''\colon C_{\hdotc}^{BM}(T_{g,n+1},\epsilon_{n+1})\longrightarrow C_{\hdotc-1}(D_{g,n,0,1},\epsilon_{n})
$$
Analogous to Proposition \ref{expr} we can realise the complex $C_{\hdotc}(D_{g,n,0,1},\epsilon_{n})$ as complex of pairs $(\Gamma,\alpha).$ Here $\Gamma$ is a ribbon graph with a unique "white vertex" \cite{Cost}. The orientation $\alpha$ is given by choice of an element in:
$$
\det(V_{\bullet}(\Gamma))\otimes \det(H(\Gamma))\otimes \det (B(\Gamma))
$$
Here $V_{\bullet}(\Gamma)$ is a set of black vertices. The differential is defined by splitting a vertex. Analogous to $D_{g,n,0,1}$ we can stratify $T_{g,n+1}$ by associating with a surfaces $\Sigma_{g,n+1}\in  T_{g,n+1}$ the corresponding dual graph $\Gamma$ (as an element in the moduli space of real curves). The unique vertex $v$ of genus $1$\footnote{The unique genus one vertex comes from considering an annulus as a real curve.} will be called the special vertex and denoted by $v_{\emptyset}.$ Hence the corresponding stratum is defined by the rule:
$$
S(\Gamma):=\left(\prod_{v\in V_{w(0)}(\Gamma)} X_v\times \mathcal N_{0,2,\pi}\right)/\mathrm {Aut}(\Gamma)
$$
Here $\pi\colon H_{v_{\emptyset}}\rightarrow [2]$ is a non-surjective morphism and $V_{w(0)}(\Gamma)$ is a set of vertices with zero weights attached. Hence we can compute the dimension of a stratum $S(\Gamma):$
$$
|\sqcup_{v\in V_{w(0)}} H_v|-3|V_{w(0)}|+\dim_{\mathbb R} \mathcal N_{0,2,\pi}=|H(\Gamma)|-3|V_{w(0)}|
$$
Note that according to \cite{DHV} the moduli space $\mathcal N_{0,2,\pi}$ is an interior of convex polytope and it is homeomorphic to the union of $\mathbb R^{H_{v_{\emptyset}}}$ over all cyclic orders on $H_{v_{\emptyset}}.$ Hence we compute:
$$
\otimes_{v\in } H_0(X_v)\otimes H_0(\mathbb R^{H_{v_{\emptyset}}})\cong \det(V_{w(0)}(\Gamma)\otimes \det (H(\Gamma))
$$
Thus we may identify chains in $C_{\hdotc}^{BM}(T_{g,n+1},\epsilon_{n+1})$ with pairs $(\Gamma,\widetilde{or})$ consisting of a ribbon graph $\Gamma$ with a unique white vertex and $\widetilde {or}\in \det(V_{\bullet}(\Gamma))\otimes \det(H(\Gamma))\otimes \det(B(\Gamma)\sqcup V_{\circ}(\Gamma)).$ Let $Rib^{k}_{g,n,1}$ be a set of ribbon graphs of genus $g$ with $n$ boundary components and with a unique white vertex. We can explicitly realise $C_{\hdotc}^{BM}(K_{g,n+1}^L\setminus D_{g,n+1},\epsilon_{n+1})$ as the total DG-vector space of the following complex:
\par\medskip
\begin{equation*}
\begin{diagram}[height=3em,width=3.3em]
\vdots & &  & &  \vdots  &  \\
\dTo^{} & & & & \dTo^{} & &     &  \\
\bigoplus_{\Gamma\in Rib^{k}_{g,n,1}}(\Gamma,\widetilde{\alpha}) && \rTo^{\delta'''}    &  & \bigoplus_{\Gamma\in Rib^{k}_{g,n,1}}(\Gamma,\alpha)  &   \\
\dTo^{} & & & & \dTo^{} & &     &  \\
\bigoplus_{\Gamma\in Rib^{k+1}_{g,n,1}}(\Gamma,\widetilde{\alpha}) && \rTo^{\delta'''}    &  & \bigoplus_{\Gamma\in Rib^{k+1}_{g,n,1}}(\Gamma,\alpha)  &   \\
\dTo^{} & & & & \dTo^{} & &     &  \\
\vdots & &  & &  \vdots  &  \\
\end{diagram}
\end{equation*}
Here morphism $\delta'''$ acts on the orientation space by the following rule: on a component corresponding to the ordering of vertices and half-edges it is identical, and on $\det(B(\Gamma))$ it acts by \eqref{loc1}. Since, in each homological degree, a morphism $\delta'''$ is an isomorphism, the desired result follows.

\end{proof}

\begin{remark} Alternatively following the scheme of the proof of Lemma $3.0.7$ from \cite{Cost} one can prove Lemma \ref{B0} by induction. To do that, one has to consider the version $K_{g,n,l}^L$ of the orbispace $K_{g,n}^L$ where we allow marked points on the boundary. Hence we consider the morphism:
$$
D_{g,n,l}\hookrightarrow K_{g,n,l}^L
$$
Further, we apply the construction from \textit{ibid.} and get a morphism between simplicial orbispaces, with simplicies, given by products of the same orbispaces of lower dimension. Passing to the total space, we get the induction step.
\end{remark}

We start with the following:
\begin{Lemma}\label{fib} For every $g\geq 0$ and $n\geq 1$ such that $2g+n-2>0$ the morphism:
$$\pi_{\overline{\mathcal N}_{g,n,0,1}}\colon \overline{\mathcal N}_{g,n,0,1}\longrightarrow \overline{\mathcal N}_{g,n}$$
is a smooth and proper oriented fibration of orbifolds with corners.

\end{Lemma}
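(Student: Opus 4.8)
The plan is to identify $\pi_{\overline{\mathcal N}_{g,n,0,1}}$ with (a base change of) the universal curve over $\overline{\mathcal N}_{g,n}$ and then to read off the three assertions — properness, the submersion/fibration property, and the triviality of the relative orientation — from the fact that the fibres are the bordered Riemann surfaces themselves. By construction $\pi_{\overline{\mathcal N}_{g,n,0,1}}$ is the pullback of the universal curve $\widetilde\pi^{comp}$, morphism \eqref{forcc}, along the inclusion $\overline{\mathcal N}_{g,n}\hookrightarrow\overline{\mathcal N}^L_{g,n}$ of Costello's locus. Since $\widetilde\pi^{comp}$ is proper (as recalled after \eqref{forcc}) and properness is stable under base change, $\pi_{\overline{\mathcal N}_{g,n,0,1}}$ is proper; concretely its fibre over $[\Sigma]$ is the compact bordered surface $\Sigma$ itself, with its real nodes of type (a).

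For the submersion statement I would argue through local charts, using \cite{Cost} and \cite{Liu} for the local models of the Costello--Liu moduli spaces and \cite{FOOO1} for the notion of a smooth fibration of orbifolds with corners. Near a point $[\Sigma,x]\in\overline{\mathcal N}_{g,n,0,1}$ a versal deformation of $\Sigma$ gives an orbifold-with-corners chart for $\overline{\mathcal N}_{g,n}$ of the form $\big(\mathbb R^{s}_{\geq 0}\times\mathbb R^{m}\big)/\mathrm{Aut}(\Sigma)$, the $s$ corner coordinates being the real smoothing parameters of the nodes of type (a). The total space near $[\Sigma,x]$ is obtained by adjoining a chart for the position of $x$ on $\Sigma$: an open disc if $x$ lies in the smooth interior, a half-disc $\mathbb R_{\geq 0}\times\mathbb R$ if $x\in\partial\Sigma$, and — after the stabilising replacement built into $\widetilde\pi^{comp}$ — the standard node chart $\{uv=t\}$ compatible with the smoothing coordinate $t$ if $x$ sits at a node. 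In each case the projection forgetting the $x$-coordinate is a submersion of orbifolds with corners of fibre dimension $2$, with the fibrewise corners (surface boundary, surface nodes) transverse to the pulled-back corners of the base; this is exactly the assertion that $\pi_{\overline{\mathcal N}_{g,n,0,1}}$ is a smooth fibration of orbifolds with corners.

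For orientedness, by Lemma \ref{KS}(ii) the relative dualising complex is $\mathbf R(i_*)j_!\mathcal L[2]$ with $\mathcal L$ a one-dimensional local system on the interior, and by Remark \ref{sse} the relative orientation sheaf $or_{\pi_{\overline{\mathcal N}_{g,n,0,1}}}$ is governed by $\mathcal L$ on the fibrewise-generic strata. Over $Int(\overline{\mathcal N}_{g,n})=\mathcal N_{g,n}$ the morphism $\pi_{\overline{\mathcal N}_{g,n,0,1}}$ restricts to the universal family of open Riemann surfaces, whose vertical tangent bundle is a complex line bundle and hence is canonically oriented as a real rank-$2$ bundle; therefore $\mathcal L$ is trivial and $\pi_{\overline{\mathcal N}_{g,n,0,1}}$ is an oriented smooth fibration of orbifolds with corners.

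The essentially formal steps here are properness and orientedness. The point requiring care — and the one I expect to be the main obstacle — is the local analysis at the real nodes of type (a) and along the fibre boundary: one must check that the corner stratification of $\overline{\mathcal N}_{g,n,0,1}$ is precisely the pullback of that of $\overline{\mathcal N}_{g,n}$ refined by the fibrewise boundary and nodal strata, so that the map is genuinely a submersion in the manifold-with-corners sense (and not merely a topological fibre bundle), and that the stabilisation incorporated in $\widetilde\pi^{comp}$ indeed repairs the node chart. This is where the explicit local models of $\overline{\mathcal N}$ and its universal curve from \cite{Cost} are indispensable.
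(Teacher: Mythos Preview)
Your proposal is correct and follows essentially the same approach as the paper: properness via pullback of \eqref{forcc}, the fibration property via the local models of Costello's Lemma 3.0.5 (the paper phrases the local model uniformly as a projection with fibre $\overline{\mathcal H}=\{z:\mathrm{Im}\,z\geq 0\}$, which subsumes your interior/half-disc/node cases), and orientedness from the canonical complex orientation of the fibre; your treatment is simply more explicit than the paper's terse pointer to \cite{Cost}.
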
 
\begin{proof} The fact that it is a proper morphism is obvious, it is a pullback of proper morphism \eqref{forcc}.  Let us show that this is fibration (in the orbifold sense). This is completely parallel to the proof of Lemma $3.0.5$ from \cite{Cost}. The marked points are allowed to vary along the surface, including a boundary (in that case, we blow a marked disk attached at this point or separate a node by a marked disk). Further the morphism $\pi_{\overline{\mathcal N}_{g,n,0,1}}$ is locally modelled as a projection with fibers isomorphic to $\overline{\mathcal H},$ where $\overline{\mathcal H}:=\{z\in \mathbb C\,| \mathrm{Im}(z)\geq 0\}$ is the closed upper complex half-plane. Hence the morphism $\pi_{\overline{\mathcal N}_{g,n,0,1}}$ is an oriented fibration. 
\end{proof} 
For every $g\geq 0$ and $n\geq 1$ we denote by $\mathcal N_{g,n}^L$ the locus of Liu's moduli space $\overline{\mathcal N}_{g,n}^L$ which consists of stable bordered surfaces without complex nodes (we allow only nodes of type (a) and (b)). $\mathcal N_{g,n}^L$ is the open suborbifold with corners of dimension $6g-6+3n,$ with interior $\mathcal N_{g,n}.$ We have a morphism:
 $$
\xi_{ \overline{\mathcal N}_{g,n,0,1}}\colon \overline{\mathcal N}_{g,n,0,1}\longrightarrow {\mathcal N}_{g,n+1}^L,
 $$
which is defined by replacing the unique marked point in the interior of the surface with the cusp. Denote by $j\colon \overline{\mathcal N}_{g,n,0,1}^{\geq 3}\hookrightarrow \overline{\mathcal N}_{g,n,0,1}$ the locus of stable bordered surfaces where a component with a marked point has valence greater than $2.$ Applying Lemma \ref{KS} and Lemma \ref{fib} we get that:
$$
\pi_{\overline{\mathcal N}_{g,n,0,1}}^!\epsilon_n\cong \pi_{\overline{\mathcal N}_{g,n,0,1}}^*\epsilon_n\otimes j_!\underline{\mathbb Q}_{\overline{\mathcal N}_{g,n,0,1}^{\geq 3}}[2].
$$
Applying the projection formula and using isomorphism of local system \eqref{loc1} we get the morphism of sheaves:
\begin{equation}\label{loc2} 
\psi\colon \pi^!_{\overline{\mathcal N}_{g,n,0,1}}\epsilon_{n}[-2]\overset{}{\longrightarrow}\xi_{\overline{\mathcal N}_{g,n,0,1}}^*\epsilon_{n+1}
\end{equation}
We have the following:
\begin{Lemma}\label{mor} Morphism \eqref{loc2} induces the quasi-isomorphism:
$$
\psi\colon C_{\hdotc}(\overline{\mathcal N}_{g,n,0,1},\xi_{\overline{\mathcal N}_{g,n,0,1}}^*\epsilon_{n+1})\overset{\sim}{\longrightarrow} C_{\hdotc-2}(\overline{\mathcal N}_{g,n,0,1},\pi^!_{\overline{\mathcal N}_{g,n,0,1}}\epsilon_{n})
$$

\end{Lemma}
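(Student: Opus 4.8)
The plan is to pin down the sheaf morphism \eqref{loc2} precisely, compute its cone, and thereby reduce the lemma to the acyclicity of the chains of the ``fibrewise boundary'' with sign coefficients.

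First I would make \eqref{loc2} explicit. By the projection formula (Lemma \ref{KS}(i)) one has $\pi^!_{\overline{\mathcal N}_{g,n,0,1}}\epsilon_n\cong\pi^*_{\overline{\mathcal N}_{g,n,0,1}}\epsilon_n\otimes\pi^!_{\overline{\mathcal N}_{g,n,0,1}}\underline{\mathbb Q}$; by Lemma \ref{KS}(ii), Lemma \ref{fib} and Remark \ref{sse} (using that $\pi_{\overline{\mathcal N}_{g,n,0,1}}$ is an \emph{oriented} fibration) the relative dualising complex is $\pi^!_{\overline{\mathcal N}_{g,n,0,1}}\underline{\mathbb Q}\cong j_!\underline{\mathbb Q}_{\overline{\mathcal N}^{\geq 3}_{g,n,0,1}}[2]$, which is exactly the identification displayed just before the lemma. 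Since the rule $\delta'''$ of \eqref{loc1} (carry the new boundary to the last slot with the sign $(-1)^{j}$) makes sense wherever $\xi_{\overline{\mathcal N}_{g,n,0,1}}$ is defined, it gives a morphism of rank-one local systems $\pi^*_{\overline{\mathcal N}_{g,n,0,1}}\epsilon_n\to\xi^*_{\overline{\mathcal N}_{g,n,0,1}}\epsilon_{n+1}$ which is nonzero on every connected component, hence an isomorphism. Under these identifications $\psi$ is the adjunction counit $j_!\underline{\mathbb Q}_{\overline{\mathcal N}^{\geq 3}_{g,n,0,1}}\to\underline{\mathbb Q}_{\overline{\mathcal N}_{g,n,0,1}}$ tensored with $\pi^*_{\overline{\mathcal N}_{g,n,0,1}}\epsilon_n$, followed by the above isomorphism; hence its cone is $i_*\bigl(\epsilon_n|_Z\bigr)$, where $i\colon Z\hookrightarrow\overline{\mathcal N}_{g,n,0,1}$ is the closed complement of $\overline{\mathcal N}^{\geq 3}_{g,n,0,1}$ --- the locus on which the marked point lies on the boundary of the surface or at a node. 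As $i$ is a closed (so proper) immersion, the formation of chains commutes with $i_*$, so applying $C_{\hdotc}$ to the exact triangle of $\psi$ shows that $\psi$ is a quasi-isomorphism if and only if $C_{\hdotc}(Z,\epsilon_n)$ is acyclic.

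Second I would prove this acyclicity by the method of Lemma \ref{B0}. Decompose $Z$ into its open ``valence-one'' stratum $Z_1$ --- where the marked point sits on a rigid disk bubble carrying a single node --- and its closed ``valence-two'' complement $Z_2$ --- where the bubble carries two nodes and one varies the position of the marked point along an interval; the recollement triangle for chains (cf. Lemma \ref{gysin}) then reads $C_{\hdotc}(Z_1,\epsilon_n)\to C_{\hdotc}(Z,\epsilon_n)\to C_{\hdotc}(Z_2,i^!\epsilon_n)\to$, so $C_{\hdotc}(Z,\epsilon_n)$ is acyclic exactly when the connecting morphism $C_{\hdotc}(Z_2,i^!\epsilon_n)\to C_{\hdotc}(Z_1,\epsilon_n)[1]$ is an isomorphism. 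Stratifying $Z_1$ and $Z_2$ by dual graphs and running the homological Cousin complex (Proposition \ref{Cous1}), with the orientation lines of the strata computed exactly as in the proof of Proposition \ref{expr} (the cell decompositions of $\mathcal N_{0,1,H_v,0}$ and $\mathcal N_{0,2,\pi}$, and the $\det$-identifications coming from the cellular complex of the closed surface $\widehat{\Sigma}_{\Gamma}$), both sides become explicit complexes of ribbon graphs carrying a marked univalent, resp. bivalent, ``tail'', and the connecting morphism acts componentwise by the non-zero --- hence invertible --- map of one-dimensional orientation spaces. This is precisely the mechanism of the map $\delta'''$ in the proof of Lemma \ref{B0}; consequently $C_{\hdotc}(Z,\epsilon_n)$ is acyclic, and the lemma follows from the long exact sequence of the cone of $\psi$.

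The step I expect to be the main obstacle is the orientation/sign bookkeeping in the second paragraph: one must check that the relevant component of the Cousin differential really is an isomorphism of the one-dimensional orientation lines --- that the orientation of a valence-two stratum matches, through the differential, that of its valence-one degeneration --- and this is exactly where the sign local system $\epsilon_n$ is indispensable (compare the computation of $\delta'''$ in the proof of Lemma \ref{B0}). If this turns out to be unwieldy, an alternative is to exhibit on $Z$ a fibrewise ``contract the marked tail'' deformation retraction whose fibres are closed intervals with the two endpoints exchanged by a symmetry acting by $-1$ on $\epsilon_n$, forcing the fibrewise chains to vanish; I would adopt whichever presentation is shorter once the strata of $Z$ have been written out, and dispose of the finitely many low-complexity exceptions directly.
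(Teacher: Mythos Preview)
Your first paragraph is correct and is exactly how the paper sets things up: the cone of $\psi$ is supported on the closed complement $Z=\overline{\mathcal N}_{g,n,0,1}^{\leq 2}$ of $j$, and the lemma reduces to the acyclicity of $C_{\hdotc}(Z,\epsilon_n)$ (equivalently $C_{\hdotc}(Z,\xi^*\epsilon_{n+1})$, via \eqref{loc1}).

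The gap is in your second paragraph. You propose to stratify $Z_1$ and $Z_2$ by dual graphs and invoke the contractible-strata case of Proposition~\ref{Cous1} ``exactly as in the proof of Proposition~\ref{expr}''. But Proposition~\ref{expr} is about $D_{g,n}$, whose strata are products of disk moduli $\mathcal N_{0,1,H_v,0}$ and hence are orbicells. The strata of $Z\subset\overline{\mathcal N}_{g,n,0,1}$ are products $\prod_v \mathcal N_{g_v,n_v,\dots}$ with \emph{arbitrary} $(g_v,n_v)$, and these are not contractible in general; so the Cousin complex for $Z$ does \emph{not} reduce to one-dimensional orientation lines, and no ``explicit complex of ribbon graphs'' materialises. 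Your $\delta'''$-style argument therefore cannot be run directly on $Z$.

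The paper inserts precisely the step you are missing: it first shows that the inclusion
\[
D_{g,n,0,1}^{\leq 2}:=\overline{\mathcal N}_{g,n,0,1}^{\leq 2}\times_{\overline{\mathcal N}_{g,n,0,1}}D_{g,n,0,1}\ \hookrightarrow\ \overline{\mathcal N}_{g,n,0,1}^{\leq 2}
\]
is a weak equivalence, by the same inductive mechanism as Costello's Lemma~3.0.7 (both sides factor as products of lower-dimensional moduli). Only after this reduction do the strata become contractible, the Cousin complex becomes a complex of ribbon graphs with a marked $\leq 2$-valent white vertex, and the decomposition $\delta=\delta_{\geq 3}+\delta_{\leq 2}$ (the $\delta_{\leq 2}$ piece creating the univalent marked vertex) yields the acyclicity --- which is indeed the $\delta'''$-type isomorphism of orientation lines you describe. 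So your endgame is right, but you must pass to $D_{g,n,0,1}^{\leq 2}$ before you can play it.
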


\begin{proof} Morphism \eqref{loc2} can be naturally factored:
$$
\pi^!_{\overline{\mathcal N}_{g,n,0,1}}\epsilon_{n}[-2]\overset{\sim}{\longrightarrow}j_!j^*\xi_{\overline{\mathcal N}_{g,n,0,1}}^*\epsilon_{n+1}\longrightarrow j_*j^*\xi_{\overline{\mathcal N}_{g,n,0,1}}^*\epsilon_{n+1}
$$
Applying the Gysin triangle to prove this Lemma it is enough to show that:
$$H_{\hdotc}(\overline{\mathcal N}_{g,n,0,1}^{\leq 2},\xi_{\overline{\mathcal N}_{g,n,0,1}}^*\epsilon_{n+1})=0,$$
 where $\overline{\mathcal N}_{g,n,0,1}^{\leq 2}$ is the closed complement to the morphism $j.$ Denote by $D_{g,n,0,1}^{\leq 2}$ the corresponding locus of nodal disks i.e. 
 $$\overline{\mathcal N}_{g,n,0,1}^{\leq 2}\times_{\overline{\mathcal N}_{g,n,0,1}} D_{g,n,0,1}:=D_{g,n,0,1}^{\leq 2}.$$
 Following Lemma $3.0.7$ from \cite{Cost} one can prove that the natural inclusion $D_{g,n,0,1}^{\leq 2}\hookrightarrow \overline{\mathcal N}_{g,n,0,1}^{\leq 2}$ is the weak equivalence. Indeed both spaces is a product of similar moduli spaces of lower dimension. We leave details to the reader. Finally one can explicitly compute that $H_{\hdotc}(D_{g,n,0,1}^{\leq 2},\xi_{\overline{\mathcal N}_{g,n,0,1}}^*\epsilon_{n+1})=0,$ indeed the Kontsevich-Penner differential $\delta$ admits the decomposition $\delta=\delta_{\geq 3}+\delta_{\leq 2},$ where $\delta_{\leq 2}$ creates one valent vertex with a marking and  $\delta_{\geq 3}$ does not. From this it is easy to see that all classes vanishes.

 \end{proof} 
Consider the fibered square:

\begin{equation*}
\begin{diagram}[height=2.3em,width=2.3em]
D_{g,n,0,1}  & & \rTo_{\sim}^{} &  &  \overline{\mathcal N}_{g,n,0,1}\\
\dTo_{\pi_{D_{g,n,0,1}}}^{} & & & &  \dTo_ {\pi_{\overline{\mathcal N}_{g,n,0,1}}}^{}  \\
D_{g,n} && \rTo^{u}_{\sim}  &&  \overline{\mathcal N }_{g,n}   \\
\end{diagram}
\end{equation*}
The morphism $\pi_{\overline{\mathcal N}_{g,n,0,1}}$ is transversal to the closed inclusion of  $D_{g,n}.$ Applying Lemma \ref{fib} and using the isomorphism from Lemma \ref{bbm} we have a morphism of sheaves:
\begin{equation}\label{loc4} 
\psi\colon \pi^!_{D_{g,n,0,1}}\epsilon_{n}[-2]\overset{}{\longrightarrow} \xi_{D_{g,n,0,1}}^*\epsilon_{n+1},
\end{equation}
which is induced by \ref{loc4}. By Lemma \ref{mor} the latter map induces the quasi-isomorphism of the corresponding chain complexes. Hence we give:

\begin{Def} We define the \textit{geometric Bridgeland differential} (as the morphism in the derived category of vector spaces):
$$
\Delta_1^{bor}\colon C_{\hdotc}(D_{g,n},\epsilon_n) \longrightarrow C_{\hdotc+2}(D_{g,n+1},\epsilon_{n+1})
$$
By the rule:
\begin{equation*}\label{}
\begin{diagram}[height=3.7em,width=4.3em]
&& C_{\hdotc+2}(D_{g,n,0,1},\xi_{D_{g,n,0,1}}^*\epsilon_{n+1})   && \\
& \ldTo_{\sim}^{\psi} & & \rdTo^{\xi_{D_{g,n,0,1}*}} & \\ 
 C_{\hdotc}(D_{g,n,0,1},\pi^!_{D_{g,n,0,1}}\epsilon_n)   &  &    & & C_{\hdotc+2}(K_{g,n+1}^L,\epsilon_{n+1}) &  \\
\uTo_{\pi^!_{D_{g,n,0,1}}}^{}  & &  &  &  \uTo_ {\sim}^{n_*}  && \\
 C_{\hdotc}(D_{g,n},\epsilon_n) &    & \rDotsto^{\Delta_1^{bor}} &   &  C_{\hdotc+2}(D_{g,n+1},\epsilon_{n+1})    \\
\end{diagram}
\end{equation*}
\end{Def}

\begin{Th}\label{B} For $g\geq 0,n\geq 1$ such that $2g+n-2>0,$ the following diagram commutes (in the derived category of vector spaces):

\begin{equation*}
\begin{diagram}[height=3.5em,width=4.2em]
 \prod_{g,n}^{\infty} C_{\dim \mathcal N_{g,n}-2dg-\hdotc}(D_{g,n},\epsilon_n)  & & \rTo_{}^{\Delta_1^{bor}} &  &   \prod_{g,n}^{\infty} C_{\dim \mathcal N_{g,n}+2-2dg-\hdotc}(D_{g,n+1},\epsilon_{n+1}) &  \\
\uTo_{\sim}^{} & & & &  \uTo_ {\sim}^{}  \\
\textsf{RGC}^{\hdot}_d(\delta)_{\geq 3} & &  \rTo_{}^{\Delta_1} &&   \textsf{RGC}^{\hdot+1}_d(\delta)_{\geq 3}   \\
\end{diagram}
\end{equation*}
\end{Th}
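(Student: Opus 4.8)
The plan is to evaluate the zig-zag defining $\Delta_1^{bor}$ on the cellular (Cousin) models of the four spaces occurring in it, and to match the outcome with the combinatorial formula \eqref{B1}. By Proposition \ref{expr} the bottom row is the Kontsevich--Penner complex $\textsf{RGC}_d^{\hdot}(\delta)_{\geq3}$, with basis the pairs $(\Gamma,or)$; by the extension of that proposition recorded in the remark after it (and already used inside the proof of Lemma \ref{B0}), the complex $C_{\hdotc}(D_{g,n,0,1},\epsilon_n)$ is modelled by ribbon graphs with a single \emph{white} vertex, a disk carrying the interior marking of any valence $\geq 1$, with a white vertex sitting two dimensions above a black vertex of the same valence; this accounts for the degree shift in the statement. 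Through Lemma \ref{B0} the space $K_{g,n+1}^L$ is quasi-isomorphic to $C_{\hdotc}(D_{g,n+1},\epsilon_{n+1})$, but its own cellular model additionally carries the strata of $T_{g,n+1}$, namely ribbon graphs of genus $g$ with $n+1$ boundaries having one \emph{annulus} vertex $v_{\emptyset}$, whose local moduli is the polytope $\mathcal N_{0,2,\pi}$ of \cite{DHV}.

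I would then read off each arrow. For $\pi^!_{D_{g,n,0,1}}$: by Lemma \ref{fib} the map $\pi_{\overline{\mathcal N}_{g,n,0,1}}$ is a proper oriented smooth fibration, transversal to $D_{g,n}\hookrightarrow\overline{\mathcal N}_{g,n}$, so Proposition \ref{bbm} (Remark \ref{BCC}) applies and the pullback is computed stratum-wise by Proposition \ref{Gor1}: over a stratum indexed by $\Gamma$ the top-dimensional strata of the fiber are those in which the interior marking lies in the open interior of one of the disk components, so $\pi^!_{D_{g,n,0,1}}(\Gamma,or)=\sum_{v\in V(\Gamma)}(\Gamma^{\circ}_v,\widetilde{or})$, with $\Gamma^{\circ}_v$ obtained from $\Gamma$ by recolouring $v$ white and $\widetilde{or}$ the transported orientation, twisted by the relative orientation sheaf of $\pi_{D_{g,n,0,1}}$. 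The isomorphism $\psi$, which by Lemma \ref{mor} and its $D$-locus version \eqref{loc4} is a quasi-isomorphism, is on the cellular model merely the bookkeeping ``interior marking $\leftrightarrow$ new cusp boundary'', contributing a sign through \eqref{loc1}. The pushforward $\xi_{D_{g,n,0,1}*}$ turns the white vertex into the annulus vertex $v_{\emptyset}$ sitting over $\mathcal N_{0,2,\pi}$ (all its former incident nodes on one boundary, the new boundary $n+1$ free), i.e.\ it lands in the $T_{g,n+1}$-part of the model of $K_{g,n+1}^L$. Applying the inverse of $n_*$ is, on chains, the acyclic resolution of $C_{\hdotc}^{BM}(K_{g,n+1}^L\setminus D_{g,n+1},\epsilon_{n+1})$ built in the proof of Lemma \ref{B0}, which amounts to passing to the boundary faces of the cell $\mathcal N_{0,2,\pi}$ that remain visible inside $D_{g,n+1}$; by the description of that polytope in \cite{DHV}, the relevant faces come from contracting an arc of the annulus with endpoints on its two boundaries, dually from adjoining a new edge joining two corners attached to one and the same boundary of $\Gamma$.

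Assembling the four steps sends $(\Gamma,or)$ to $\sum_{b\in B(\Gamma)}\sum_{c_1,c_2\in C(b)}(e_{c_1c_2}(\Gamma),\cdot)$, which is exactly the Bridgeland differential $\Delta_1$ of \eqref{B1}; a short diagram chase using the quasi-isomorphisms already produced then gives the commutativity asserted in the statement. For the signs I would either propagate the determinant lines through Proposition \ref{Gor1}, the isomorphism \eqref{loc1}, and the boundary map of the polytope, or --- more robustly --- compare $\Delta_1^{bor}$ with an intermediate operator $\Delta_1^{bor'}$ built from the locus $D'_{g,n,2}\subset D_{g,n,0,2}$ of surfaces carrying two markings on a common boundary together with the clutching morphism joining those two markings by a node: one proves a decomposition $\Delta_1^{bor'}=\Delta_1^{bor}+\Delta_1$ and the vanishing $\Delta_1^{bor'}=0$, which simultaneously fixes the combinatorics and the sign.

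The main obstacle is the step involving the inverse of $n_*$ and the polytope $\mathcal N_{0,2,\pi}$: one has to single out exactly which faces of that polytope survive in $D_{g,n+1}$ --- as opposed to those in which the annulus pinches off further rational bubbles, which are annihilated by the resolution --- and verify that the induced face combinatorics is precisely the ``join two corners of one boundary by an edge'' rule of \eqref{B1}, together with the matching orientation data. The convexity of $\mathcal N_{0,2,\pi}$ coming from the Weil--Petersson picture used in \cite{Cost} and \cite{DHV} is what makes this face analysis tractable, and it is essentially the content of the two key propositions on $\Delta_1^{bor}$ and $\Delta_1^{bor'}$ referred to above.
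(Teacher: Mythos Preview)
Your outline of the zig-zag is accurate up to and including the pushforward $\xi_{D_{g,n,0,1}*}$, which indeed lands in the $T_{g,n+1}$-part of the cellular model of $K_{g,n+1}^L$ with an annulus vertex $v_{\emptyset}$. The gap is in your analysis of $n_*^{-1}$. Contracting an arc between the two boundaries of the annulus chooses a corner $c$ on the \emph{nodal} boundary (where the half-edges sit) and an arbitrary point on the \emph{free} boundary; after pinching and normalising you obtain a single disk whose two new half-edges are adjacent, inserted at corner $c$ with the empty new boundary between them. In other words the face map produces $e_{c,c}(\Gamma)$, not $e_{c_1,c_2}(\Gamma)$ for distinct corners. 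Summing over $v$ and then over corners at $v$ yields $\sum_{b}\sum_{c\in C(b)}e_{c,c}(\Gamma)$ --- the \emph{diagonal} part only --- not the Bridgeland differential \eqref{B1}. This is exactly the point flagged in the introduction: degenerations of the annulus connect \emph{identical} corners. So your direct computation, carried out correctly, shows $\Delta_1^{bor}$ equals the diagonal operator, and does not by itself give the theorem.

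What closes the gap is precisely the operator you relegate to a sign-fixing afterthought. The paper's proof makes $\Delta_1^{bor'}$ (built from $D'_{g,n,2}$ and the clutching map) the central object: Proposition~\ref{KeyL4} computes $\Delta_1^{bor'}$ cellularly via Proposition~\ref{Gor1} as the sum over \emph{all} pairs of corners on a boundary, and identifies the diagonal piece with $\Delta_1^{bor}$ using the $\delta''''$ face analysis above, whence $\Delta_1^{bor'}=\Delta_1+\Delta_1^{bor}$. Proposition~\ref{KeyL3} then shows $\Delta_1^{bor'}=0$ by transporting the question along Costello's equivalence to the smooth locus $\mathcal N'_{g,n,2}$ and observing that the $\Sigma_2$-action on the two boundary markings is homologically trivial (via the double cover $\mathcal N'_{g,n,[2]}\to\mathcal N'_{g,n,2}$ and the fibration by $FM^{[2]}(S^1)$), so the sign-twisted invariants vanish. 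Combining the two gives $\Delta_1^{bor}=-\Delta_1$, which is the theorem. Thus the $\Delta_1^{bor'}$ detour is not optional book-keeping: without the vanishing result there is no reason the diagonal operator should coincide with $\Delta_1$, and indeed on chains it does not.
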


\begin{proof} 
Consider the following locus $\overline{\mathcal N}_{g,n,2}'$ in $\overline{\mathcal N}_{g,n,2}$ which consists of all stable bordered surfaces with markings lying on the same boundary. We have the corresponding forgetful morphism $\pi_{\overline{\mathcal N}_{g,n,2}'}\colon \overline{\mathcal N}_{g,n,2}'\longrightarrow \overline{\mathcal N}_{g,n},$ which is a proper and smooth fibration of orbifolds with corners (see Lemma $3.0.5$ in \cite{Cost}). The relative dualising sheaf $\pi_{\overline{\mathcal N}_{g,n,2}'}^!\underline{\mathbb Q}_{\overline{\mathcal N}_{g,n}}$ has the following description:
\begin{equation}\label{loc6}
\pi_{\overline{\mathcal N}_{g,n,2}'}^!\underline{\mathbb Q}_{\overline{\mathcal N}_{g,n}}\cong l_!\epsilon_2[2],
\end{equation} 
where $l\colon \overline{\mathcal N}_{g,n,2}^{\leq 3}\hookrightarrow \overline{\mathcal N}_{g,n,2}'$ is the locus of stable bordered surfaces, where irreducible components with markings have valency greater than two. 
$\epsilon_2$ is a sign local system acting on marked points on a boundary. We also have the corresponding clutching morphism $\xi_{\overline{\mathcal N}_{g,n,2}^2}\colon \overline{\mathcal N}_{g,n,2}'\longrightarrow \overline{\mathcal N}_{g,n+1}$ which is proper.  By $D_{g,n,2}'$ we denote the pullback along the inclusion of $D_{g,n}.$ The  corresponding clutching morphism $\xi_{D_{g,n,2}'}\colon D_{g,n,2}'\longrightarrow D_{g,n+1}$ will be proper. We denote by $\pi_{D_{g,n,2}'}\colon D_{g,n,2}'\longrightarrow D_{g,n}$ the corresponding forgetful morphism. Note that $\pi_{\overline{\mathcal N}_{g,n,2}'}$ is transversal to the inclusion of $D_{g.n}$ Analogous to \eqref{loc2} we have the morphism of local systems:
\begin{equation}\label{loc3}
\theta\colon \pi_{\overline{\mathcal N}_{g,n,2}'}^!\epsilon_n [-2]\longrightarrow \xi_{\overline{\mathcal N}_{g,n,2}'}^*\epsilon_{n+1},
\end{equation} 
defined by the following rule:
$$
\rho\colon b'\wedge\dots\wedge b\wedge\dots \wedge b''\otimes m_{b}\wedge m_{b}\longmapsto b'\wedge\dots\wedge m_{b}\wedge m_{b}\wedge\dots \wedge b'',$$
where we place new boundaries instead of a boundary $b$ (the latter boundary carries marked points). 
The morphism $\theta$ induces the quasi-isomorphism of the corresponding chain complex (cf. Lemma \ref{mor}). Hence we define the following pull-push zigzag morphism:
\begin{equation*}\label{}
\begin{diagram}[height=3.5em,width=4.2em]
 C_{\hdotc}^{}(D_{g,n,2}',\pi_{D_{g,n,2}'}^!\epsilon_{n})  &  & \lTo_{\sim}^{\theta}   & & C_{\hdotc+2}^{}(D_{g,n,2}',\xi_{D_{g,n,2}'}^*\epsilon_{n+1}) &  \\
\uTo_{\pi^!_{D_{g,n,2}'}}^{}  & &  &  &  \dTo_{}^{\xi_{D_{g,n,2}'*}}  && \\
 C_{\hdotc}(D_{g,n},\epsilon_n) &    & \rDotsto^{\Delta^{bor'}_1} &   &  C_{\hdotc+2}(D_{g,n+1},\epsilon_{n+1})    \\
\end{diagram}
\end{equation*}
We have the following:
\begin{Prop}\label{KeyL3} For any $g\geq 0$ and $n\geq 1$ such that $2g+n-2->0$ the morphism: 
$$
\Delta_1^{bor'}\colon C_{\hdotc}(D_{g,n},\epsilon_n) \longrightarrow C_{\hdotc+2}(D_{g,n+1},\epsilon_{n+1}) 
$$
vanishes.

\end{Prop} 

\begin{proof} 

Consider the following diagram:
\begin{equation*}
\begin{diagram}[height=3.5em,width=4.4em]
  {\mathcal N}_{g,n,2}' & \rTo^{{\widetilde{v}}_{}}_{\sim}    & \overline{\mathcal N}_{g,n,2}' & \lTo^{{\widetilde{u}}_{}}_{\sim }      &   D_{g,n,2}' &\\
 \dTo^{\pi_{{\mathcal N}_{g,n,2}'}}    &  & \dTo^{\pi_{\overline{\mathcal N}_{g,n,2}'}}  &  & \dTo^{\pi_{D_{g,n,2}'}}  &  \\
{\mathcal N}_{g,n} & \rTo^{v}_{\sim} & \overline{\mathcal N}_{g,n}  & \lTo^{u}_{\sim} &D_{g,n}
\end{diagram}
\end{equation*}
By $\mathcal N_{g,n,2}',$ we denote the pullback of $\overline{\mathcal N}_{g,n,2}'$ along the inclusion of the smooth locus. Morphisms $\widetilde{u}$ and $\widetilde{v}$ are weak homotopy equivalences since $\pi_{\overline{\mathcal N}_{g,n,2}'}$ is a fibration. 
Consider the canonical projection morphism:
\begin{equation}\label{spr}
\mathcal N_{g,n,[2]}'\longrightarrow \mathcal N_{g,n,2}'
\end{equation}
This morphism is a $\Sigma_2$-covering, and hence the corresponding pushforward induces the quasi-isomorphism:
$$
(C_{\hdotc}(\mathcal N_{g,n,[2]}',\epsilon_n)\otimes \mathrm{sgn}_2)^{\Sigma_2}\overset{\sim}{\longrightarrow} C_{\hdotc}(\mathcal N_{g,n,2}',\epsilon_n\otimes \epsilon_2)
$$
We claim that the symmetric group $\Sigma_2$ acts identically on $H_{\hdotc}(\mathcal N_{g,n,2}',\epsilon_n).$ Consider the following fibration:
$$
FM^{[2]}(S^1)\longrightarrow {\mathcal N}_{g,n,[2]}'\longrightarrow \mathcal N_{g,n}.
$$
Where $FM^{[2]}(S^1)$ is the Fulton-MacPherson space of two labelled points on a circle i.e. compactification of the configuration spaces $Conf^{[2]}(S^1).$ Morphism \eqref{spr} induces the following commutative diagram of fibrations:
\begin{equation*}\label{}
\begin{diagram}[height=2.4em,width=4.2em]
FM^{2}(S^1) &  \rTo  & {\mathcal N}_{g,n,2}' & \rTo  &  \mathcal N_{g,n} \\
\uTo_{p_1}^{}  & & \uTo_{p_2}  &  &  \uTo_{p_3}^{} \\
FM^{[2]}(S^1) &  \rTo  & {\mathcal N}_{g,n,[2]}' & \rTo  &  \mathcal N_{g,n}\\
\end{diagram}
\end{equation*}
The morphism $p_1$ is a map of degree two and hence induces an isomorphism in the rational homology, and the morphism $p_3$ is identity. Hence, the pushforward long the morphism $p_2$ induces an isomorphism over $\mathbb Q.$ Hence, the action is trivial. 
\end{proof}

\begin{Prop}\label{KeyL4} For any $g\geq 0$ and $n\geq 1$ such that $2g+n-2>0$ the morphism:
$$
\Delta_1^{bor'}\colon C_{\hdotc}(D_{g,n},\epsilon_n) \longrightarrow C_{\hdotc+2}(D_{g,n+1},\epsilon_{n+1}) 
$$
admits the following decomposition:
$$
\Delta_1^{bor'}=\Delta_1+\Delta_1^{bor}.
$$

\end{Prop}

\begin{proof}

Let $(\Gamma,\alpha)$ be a cochain in $C_{\hdotc}(D_{g,n},\epsilon_n)$ i.e. a ribbon graph of genus $g$ with $n$ boundaries together with an element $\alpha$ in $\det(E(\Gamma)).$ We suppose that the number of edges equals to $l.$ The corresponding stratum in $D_{g,n}$ will be denoted by $X(\Gamma).$ By Proposition \ref{Gor1} and we have the following description of the Gysin pullback:
$$
\pi^!_{D_{g,n,2}'}(\Gamma,\alpha)=H_{\dim \pi^{-1}_{D_{g,n,2}'}(X({\Gamma}))}^{BM}(\pi^{-1}_{D_{g,n,2}'}(X({\Gamma})),\pi^!_{D_{g,n,2}'}\epsilon_{n}) 
$$
Using isomorphism \eqref{loc6} we get that the homology above is given by the following collection:
$$
\pi^!_{D_{g,n,2}'}(\Gamma,\alpha)=\sum_{b\in B(\Gamma)} \sum_{c,c'\in C(b)} (\widetilde{e}_{c,c'}(\Gamma),\alpha'), 
$$
where $\widetilde{e}_{c,c'}$ attaches two non-labelled hairs to the ribbon $\Gamma$ at corners $c$ and $c'.$ $\alpha'$ is an element of the following space:
$$
 \det(V(\Gamma))\otimes \det (H(\Gamma))\otimes \det(B(\Gamma))\otimes \det(M(\Gamma)),
$$
where $M(\Gamma)$ is a set of hairs of the ribbon graph. Assume that $\Gamma$ represents some cohomology class. Then the cohomology class of $\Delta_1^{bor'}(\Gamma)$ will be represented by:
$$\Delta_1^{bor'}(\Gamma)=\sum_{b\in B(\Gamma)} \sum_{c,c'\in C(b)} {e}_{c,c'}(\Gamma)$$
\par\medskip 
Analogously, the cohomology class of $\Delta_1^{bor}(\Gamma)$ can be represented by:
$$\Delta_1^{bor}(\Gamma)=\sum_{b\in B(\Gamma)} \sum_{c=c'\in C(b)} ({e}_{c,c'}(\Gamma),\alpha'').$$
This follows from the geometry of the stratified space $K_{g,n}^L.$ Consider a differential $D$ in the complex $C_{\hdotc}(K_{g,n}^L,\epsilon_n).$ This differential has the following decomposition:
$$
D=\delta+\delta'+\delta''+\delta'''+\delta''''.
$$
Where we have:
\begin{enumerate}[(a)]
\par\medskip 
\item $\delta$ is the standard differential in the Kontsevich-Penner ribbon graph complex, 
\item $\delta'$ is the differential which is defined by splitting a vertex on nodal surfaces which has a "genus one" component, 
\item $\delta''$ is defined by splitting a vertex on surfaces with a marking on an irreducible component, 
\item $\delta'''$ is defined by contracting a loop over a "non-singular" boundary of a nodal surface with a "genus one" component,
\item $\delta''''$ is defined by contracting an arc with endpoints on distinct boundary components of a genus one irreducible component. 
\par\medskip 
\end{enumerate}
From Lemma \ref{B0} we get that for any class $\omega$ of $D_{g,n,0,1}$ there exists a class $\omega'$ of $T_{g,n+1}$ (in the Borel-Moore homology). From the description above we get that that $D(\omega')=\delta'''(\omega) +\delta''''(\omega).$ Hence we get that chains are homologous. Comparing with \eqref{B1} we get the decomposition.

\end{proof} 
Hence by Proposition \ref{KeyL3} we obtain that the differential $\Delta_1^{bor}$ coincides with $\Delta_1.$ 

\end{proof}

\subsection{T. Willwacher's differential $\nabla_1^{bor}$}

For every $g\geq 0$ and $n\geq 1$ we have a sequence of inclusions:
$$
\mathcal N_{g,n}\overset{u}{\hookrightarrow} \overline{\mathcal N}_{g,n}\overset{q}{\hookrightarrow} {\mathcal N}_{g,n}^L
$$
 Note that since $\mathcal N_{g,n}$ is an interior of ${\mathcal N}_{g,n}^L$ the morphism $q\circ u$ is a weak homotopy equivalence. Hence by two-out-of-three property of weak equivalences, the morphism $q$ is a homotopy equivalence as well. We have a morphism 
$$
\nabla_1^{bor'}\colon C_{\hdotc}(\overline{\mathcal N}_{g,n},\epsilon_n)  \longrightarrow C_{\hdotc+2}(\overline{\mathcal N}_{g,n+1},\epsilon_{n+1})
$$
defined by the rule:
\begin{equation*}\label{}
\begin{diagram}[height=3.3em,width=4.3em]
&& C_{\hdotc+2}(\overline{\mathcal N}_{g,n,0,1},\xi^*_{\overline{\mathcal N}_{g,n,0,1}}\epsilon_{n+1})   && \\
& \ldTo_{\sim}^{\psi} & & \rdTo^{\xi_{\overline{\mathcal N}_{g,n,0,1}*}} & \\ 
  C_{\hdotc}(\overline{\mathcal N}_{g,n,0,1},\pi^!_{\overline{\mathcal N}_{g,n,0,1}}\epsilon_n)  &  &    & & C_{\hdotc+2}({\mathcal N}_{g,n+1}^L,\epsilon_{n+1}) &  \\
\uTo_{\pi^!_{\overline{\mathcal N}_{g,n,0,1}}}^{}  & &  &  &  \uTo_ {\sim}^{q_*}  && \\
 C_{\hdotc}(\overline{\mathcal N}_{g,n},\epsilon_n)  &    & \rDotsto^{\nabla_1^{bor'}} &   &   C_{\hdotc+2}(\overline{\mathcal N}_{g,n+1},\epsilon_{n+1})    \\
\end{diagram}
\end{equation*}
Hence we give the following:
\begin{Def} For every $g\geq 0$ and $n\geq 1$ such that $2g+n-2>0$ by ${\nabla}_1^{bor}$ we denote the unique morphism:
$$
\nabla_1^{bor}\colon C_{\hdotc}(\mathcal N_{g,n},\epsilon_n)  \longrightarrow C_{\hdotc+2}(\mathcal N_{g,n+1},\epsilon_n),
$$
which suits in the following diagram:
defined by the rule:
\begin{equation*}\label{fb1}
\begin{diagram}[height=3em,width=4.2em]
C_{\hdotc}(\overline{\mathcal N}_{g,n},\epsilon_{n}) & &  \rTo^{\nabla_1^{bor'}} &  &   C_{\hdotc+2}(\overline{\mathcal N}_{g,n+1},\epsilon_{n+1})\\
\uTo_{v_*}^{\sim} & & &  & \uTo_{v_*}^{\sim}  & \\
C_{\hdotc}(\mathcal N_{g,n},\epsilon_n)   & &  \rTo^{\nabla_1^{bor}} &  & C_{\hdotc+2}(\mathcal N_{g,n+1},\epsilon_{n+1})   \\
\end{diagram}
\end{equation*}
\end{Def}

We have the following:

\begin{Prop}\label{comp} For every $g\geq0$ and $n\geq 1$ such that $2g+n-2>0$ the following square commutes (in the derived category of vector spaces):

\begin{equation*}
\begin{diagram}[height=3.5em,width=4.2em]
\prod_{g,n}^{\infty} C_{\dim \mathcal N_{g,n}-2dg-\hdotc}(\mathcal N_{g,n},\epsilon_n) & &  \rTo_{}^{{\nabla}_1^{bor}} &  &   \prod_{g,n}^{\infty}C_{\dim \mathcal N_{g,n+1}-2dg-\hdotc}(\mathcal N_{g,n+1},\epsilon_{n+1}) &  \\
\uTo_{\sim}^{v_*^{-1}\circ u_*} & & &  & \uTo_ {\sim}^{v_*^{-1}\circ u_*}  && \\
\textsf{RGC}_{d}^{\hdot}(\delta)_{\geq 3} & &  \rTo_{}^{\Delta_1} &  & \textsf{RGC}_{d}^{\hdot+1}(\delta)_{\geq 3}   \\
\end{diagram}
\end{equation*}
\end{Prop}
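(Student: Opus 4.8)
The plan is to derive Proposition~\ref{comp} from Theorem~\ref{B}, the one missing ingredient being that the geometric Bridgeland differential $\Delta_1^{bor}$ on $\prod_{g,n} C_{\hdotc}(D_{g,n},\epsilon_n)$ and the differential $\nabla_1^{bor}$ on $\prod_{g,n} C_{\hdotc}(\mathcal N_{g,n},\epsilon_n)$ are intertwined by the Costello zig-zag $v_*^{-1}\circ u_*$. Indeed, the left-hand vertical arrow of the square in Proposition~\ref{comp} is the composite of the identification $\textsf{RGC}_d^{\hdot}(\delta)_{\geq 3}\cong\prod_{g,n} C_{\dim\mathcal N_{g,n}-2dg-\hdotc}(D_{g,n},\epsilon_n)$ of Proposition~\ref{expr} with $v_*^{-1}\circ u_*$, and Theorem~\ref{B} already asserts that the former identification carries $\Delta_1$ to $\Delta_1^{bor}$. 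So it suffices to prove that $\nabla_1^{bor}\circ(v_*^{-1}\circ u_*)=(v_*^{-1}\circ u_*)\circ\Delta_1^{bor}$ in the derived category.

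First I would compose with $v_*$ on the left. Since $v_*\circ\nabla_1^{bor}=\nabla_1^{bor'}\circ v_*$ by the very definition of $\nabla_1^{bor}$, the claim becomes the equality $\nabla_1^{bor'}\circ u_*=u_*\circ\Delta_1^{bor}$ of morphisms $\prod_{g,n}C_{\hdotc}(D_{g,n},\epsilon_n)\longrightarrow\prod_{g,n}C_{\hdotc+2}(\overline{\mathcal N}_{g,n+1},\epsilon_{n+1})$. Because the inclusion $q\colon\overline{\mathcal N}_{g,n+1}\hookrightarrow\mathcal N_{g,n+1}^L$ is a weak homotopy equivalence, $q_*$ is a quasi-isomorphism, so it is enough to check this equality after composing with $q_*$. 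I would then unwind $\Delta_1^{bor}$ and $\nabla_1^{bor'}$ into their defining zig-zags — the first through $D_{g,n,0,1}$ and $K_{g,n+1}^L$, the second through $\overline{\mathcal N}_{g,n,0,1}$ and $\mathcal N_{g,n+1}^L$ — and compare them term by term along the closed inclusions $\iota_1\colon D_{g,n,0,1}\hookrightarrow\overline{\mathcal N}_{g,n,0,1}$, $\iota_2\colon K_{g,n+1}^L\hookrightarrow\mathcal N_{g,n+1}^L$ and $u\colon D_{g,n+1}\hookrightarrow\overline{\mathcal N}_{g,n+1}$.

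There are three compatibilities to record. \textbf{(i)} The fibered square relating $\pi_{D_{g,n,0,1}}$ and $\pi_{\overline{\mathcal N}_{g,n,0,1}}$ over $u$ (displayed just before the definition of $\Delta_1^{bor}$) satisfies the transversality hypotheses of Proposition~\ref{bbm} and Remark~\ref{BCC}, whence $(\iota_1)_*\circ\pi_{D_{g,n,0,1}}^!=\pi_{\overline{\mathcal N}_{g,n,0,1}}^!\circ u_*$, the twisted coefficients agreeing because $or_{\pi_{D_{g,n,0,1}}}$ is the $\iota_1$-pullback of $or_{\pi_{\overline{\mathcal N}_{g,n,0,1}}}$. \textbf{(ii)} The sheaf morphism $\psi$ of~\eqref{loc4} on $D_{g,n,0,1}$ is the $\iota_1$-pullback of the sheaf morphism $\psi$ of~\eqref{loc2} on $\overline{\mathcal N}_{g,n,0,1}$ (both built from the sign-system isomorphism~\eqref{loc1}); since pushforward in chains is functorial in the coefficients, $\psi\circ(\iota_1)_*=(\iota_1)_*\circ\psi$, and hence $\psi^{-1}\circ(\iota_1)_*=(\iota_1)_*\circ\psi^{-1}$, both $\psi$'s being quasi-isomorphisms by Lemma~\ref{mor}. \textbf{(iii)} From the identities $\xi_{\overline{\mathcal N}_{g,n,0,1}}\circ\iota_1=\iota_2\circ\xi_{D_{g,n,0,1}}$ and $\iota_2\circ n=q\circ u$ — the last two being the canonical inclusion $D_{g,n+1}\hookrightarrow\mathcal N_{g,n+1}^L$ in Liu's moduli space — functoriality of pushforward gives $(\iota_2)_*\circ(\xi_{D_{g,n,0,1}})_*=(\xi_{\overline{\mathcal N}_{g,n,0,1}})_*\circ(\iota_1)_*$ and $(\iota_2)_*\circ n_*=q_*\circ u_*$. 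Granting (i)--(iii) one computes
$$
q_*\circ\nabla_1^{bor'}\circ u_*=(\xi_{\overline{\mathcal N}_{g,n,0,1}})_*\circ\psi^{-1}\circ\pi_{\overline{\mathcal N}_{g,n,0,1}}^!\circ u_*=(\iota_2)_*\circ(\xi_{D_{g,n,0,1}})_*\circ\psi^{-1}\circ\pi_{D_{g,n,0,1}}^!
$$
(applying (i), then (ii), then (iii)) and
$$
q_*\circ u_*\circ\Delta_1^{bor}=q_*\circ u_*\circ n_*^{-1}\circ(\xi_{D_{g,n,0,1}})_*\circ\psi^{-1}\circ\pi_{D_{g,n,0,1}}^!=(\iota_2)_*\circ(\xi_{D_{g,n,0,1}})_*\circ\psi^{-1}\circ\pi_{D_{g,n,0,1}}^!
$$
(using $q_*\circ u_*\circ n_*^{-1}=(\iota_2)_*$ from (iii)); the two expressions coincide, which is the required commutativity and, together with Theorem~\ref{B}, finishes the proof. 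This is the ``formal check'' referred to in the Introduction.

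The main obstacle is not the diagram chase but item~(i): one must confirm that the smooth proper fibration $\pi_{\overline{\mathcal N}_{g,n,0,1}}$ of orbifolds with corners (Lemma~\ref{fib}) really is transversal, in the stratified orbispace sense of Remark~\ref{BCC}, to the inclusion of the badly singular locus $D_{g,n}\subset\overline{\mathcal N}_{g,n}$, and that the base-change isomorphism of Proposition~\ref{bbm} identifies the relative orientation sheaf $or_{\pi_{D_{g,n,0,1}}}$ with $\iota_1^{*}or_{\pi_{\overline{\mathcal N}_{g,n,0,1}}}$ compatibly with the normalisation of $\psi$ fixed in~\eqref{loc4}. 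Once this transversality-and-orientation bookkeeping is in place, items~(ii)--(iii) and the assembly are purely formal consequences of the functoriality of the operations set up in Section~2.
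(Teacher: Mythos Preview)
Your proposal is correct and follows essentially the same approach as the paper: the paper presents exactly the ladder diagram whose four rungs are your items (i)--(iii) (Gysin base change from Proposition~\ref{bbm} for the bottom square, naturality of $\psi$, and functoriality of the pushforwards $\xi_*$, $n_*$, $q_*$, $u_*$ for the remaining squares), notes that the middle inclusion $\widetilde u\colon K_{g,n+1}^L\hookrightarrow\mathcal N_{g,n+1}^L$ (your $\iota_2$) induces a quasi-isomorphism by two-out-of-three, and then invokes Theorem~\ref{B}. Your linear/equational unwinding is just a different packaging of the same diagram chase.
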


\begin{proof} We have the following diagram: 
$$
\begin{diagram}[height=3em,width=8.2em]
 C_{\hdotc+2}(\overline{\mathcal N}_{g,n+1},\epsilon_{n+1}) & \lTo^{{u}_*}_{\sim}   &   C_{\hdotc+2}(D_{g,n+1},\epsilon_{n+1})\\
  \dTo^{q_*}_{\sim} & & \dTo^{n_*}_{\sim}  & \\
 C_{\hdotc+2}({\mathcal N}_{g,n+1}^L,\epsilon_{n+1}) & \lTo^{\widetilde{u}_*}_{\sim}  &   C_{\hdotc+2}(K_{g,n+1}^L,\epsilon_{n+1})\\
  \uTo^{\xi_{\overline{\mathcal N}_{g,n,0,1}*}} & & \uTo_{\xi_{D_{g,n,0,1}*}}  & \\
C_{\hdotc+2}(\overline{\mathcal N}_{g,n,0,1},\xi_{\overline{\mathcal N}_{g,n,0,1}}^*\epsilon_{n+1}) & \lTo^{{u}_*}_{}   &   C_{\hdotc+2}(D_{g,n,0,1},\xi^*_{D_{g,n,0,1}}\epsilon_{n+1})  \\
 \uTo^{\psi}_{\sim} & & \uTo^{\psi}_{\sim}  & \\
C_{\hdotc}(\overline{\mathcal N}_{g,n,0,1},{\pi}_{\overline{\mathcal N}_{g,n,0,1}}^!\epsilon_n)& \lTo^{{u}_*}_{}   &   C_{\hdotc}(D_{g,n,0,1},\pi^!_{D_{g,n,0,1}}\epsilon_n)  \\
  \uTo^{{\pi}_{\overline{\mathcal N}_{g,n,0,1}}^!} & & \uTo^{{\pi}_{D_{g,n,0,1}}^!}  & \\
C_{\hdotc}(\overline{\mathcal N}_{g,n},\epsilon_n) & \lTo^{u_*}_{\sim} & C_{\hdotc}(D_{g,n},\epsilon_n)   \\
\end{diagram}
$$
Hence by Proposition \ref{bbm} the bottom square of the diagram above commutes. Moreover $\widetilde{u}_*$ is a quasi-isomorphism (by two-out-of-three property of weak equivalences). Hence we get that that the diagram above is commutative. Applying Theorem \ref{B} we get the result. 

\end{proof}

\begin{Th}\label{comml} For every $g\geq0$ and $n\geq 1,$ such that $2g+n-2>0$ the following square commutes (in the derived category of vector spaces):
\begin{equation*}
\begin{diagram}[height=3.2em,width=4em]
 C_c^{\hdot} (\mathcal M_{g,n}/\Sigma_{n},\epsilon) & & \rTo^{\nabla_1}  &  &   C_c^{\hdot} (\mathcal M_{g,n+1}/\Sigma_{n+1},\epsilon) &\\
 \dTo^{\rho_{\mathcal M_{g,n}/\Sigma_{n}}} & &  &  & \dTo_{\rho_{\mathcal M_{g,n+1}/\Sigma_{n+1}}}  &    \\
C_c^{\hdot+n}(\mathcal N_{g,n},\mathbb Q) &  & \rTo^{\mathbb D\circ \nabla_1^{bor}\circ \mathbb D} & & C_c^{\hdot+n+1}(\mathcal N_{g,n+1},\mathbb Q)
\end{diagram}
\end{equation*}

\end{Th}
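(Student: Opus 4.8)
The plan is to exhibit both vertical composites $\rho_{\mathcal M_{g,n+1}/\Sigma_{n+1}}\circ(\mathbb D\circ\nabla_1^{bor}\circ\mathbb D)$ and $\nabla_1\circ\rho_{\mathcal M_{g,n}/\Sigma_n}$ as two restrictions of a single pull--push operator defined on the cusp locus $\widetilde{\mathcal N}_{g,n}\subset\overline{\mathcal N}_{g,n}^L$, namely the one induced by the proper forgetful morphism $\widetilde{\pi}^{comp}$ of \eqref{forcc} together with the clutching morphism $\xi$ which replaces the interior marking by a cusp. The bridge between the two sides is Theorem \ref{wp}: the locus $\widetilde{\mathcal N}_{g,n}$ receives both $\mathcal M_{g,n}/\Sigma_n$ (via the closed inclusion $a$ onto the maximally degenerate locus) and $\mathcal N_{g,n}$ (via the open inclusion $b$ of its interior) as weak homotopy equivalences, and by construction $\rho_{\mathcal M_{g,n}/\Sigma_n}=a_!^{-1}\circ b_!$.

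First I would introduce the preimage $\widetilde{\mathcal N}_{g,n,0,1}:=(\widetilde{\pi}^{comp})^{-1}(\widetilde{\mathcal N}_{g,n})\subset \widetilde{\mathcal N}_{g,n,0,1}^{L}$, the restricted forgetful morphism $\pi\colon\widetilde{\mathcal N}_{g,n,0,1}\to\widetilde{\mathcal N}_{g,n}$ (which is proper and defines a universal curve, by the construction of \eqref{forcc}), and the morphism $\xi\colon\widetilde{\mathcal N}_{g,n,0,1}\to{\mathcal N}^L_{g,n+1}$. Exactly as in Lemmas \ref{fib} and \ref{mor} one has $\pi^!\epsilon_n[-2]\cong\xi^*\epsilon_{n+1}$ over the locus where the marked component is at least trivalent, which yields a quasi-isomorphism $\psi$; running the same zigzag as in the construction of $\nabla_1^{bor'}$ (and pushing forward along the weak equivalence $\widetilde{\mathcal N}_{g,n+1}\hookrightarrow{\mathcal N}^L_{g,n+1}$) one obtains a pull--push operator
\[
\widetilde{\nabla}_1\colon C_{\hdotc}(\widetilde{\mathcal N}_{g,n},\epsilon_n)\longrightarrow C_{\hdotc+2}(\widetilde{\mathcal N}_{g,n+1},\epsilon_{n+1}),
\]
which we regard, via Poincaré--Verdier duality and \eqref{orsys}, also as an operator on compactly supported cochains.

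The two composites are then matched by restriction along $b$ and along $a$. Over the interior $\mathcal N_{g,n}$ the spaces $\widetilde{\mathcal N}_{g,n,0,1}$ and $\overline{\mathcal N}_{g,n,0,1}$ coincide (both record only real nodes of type (a) once the base is smooth), so $\pi$ pulls back to $\pi_{\overline{\mathcal N}_{g,n,0,1}}$ and $\xi$ to $\xi_{\overline{\mathcal N}_{g,n,0,1}}$. Since $\pi_{\overline{\mathcal N}_{g,n,0,1}}$ is a smooth proper oriented fibration transversal to the closed complement of $b$ (Lemma \ref{fib}), Proposition \ref{bbm} and base change for the clutching morphisms give $\widetilde{\nabla}_1\circ b_!=b_!\circ(\mathbb D\circ\nabla_1^{bor}\circ\mathbb D)$, after inserting the identifications from the definition of $\nabla_1^{bor}$. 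Over the maximally degenerate locus $a\colon\mathcal M_{g,n}/\Sigma_n\hookrightarrow\widetilde{\mathcal N}_{g,n}$ the morphism $\widetilde{\pi}^{comp}$ restricts to (a model of) the universal curve $\overline{\mathcal M}_{g,n+1}\times_{\overline{\mathcal M}_{g,n}}\mathcal M_{g,n}\to\mathcal M_{g,n}$, cusps becoming marked points; its total space is still a smooth orbifold and the morphism is proper, so the Gysin pullback along it is defined even though it is not a submersion. The open complement of $\mathcal M_{g,n+1}/\Sigma_n$ inside this family is the union of the rigid rational bubbles, and unwinding $\xi$ and the zigzag defining the Willwacher differential in \cite{AK} identifies the resulting operator with $\nabla_1$, i.e. $\widetilde{\nabla}_1\circ a_!=a_!\circ\nabla_1$. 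Combining the two commutations with $b_!=a_!\circ\rho_{\mathcal M_{g,n}/\Sigma_n}$ yields
\begin{align*}
\rho_{\mathcal M_{g,n+1}/\Sigma_{n+1}}\circ(\mathbb D\circ\nabla_1^{bor}\circ\mathbb D)
&=a_!^{-1}\circ\widetilde{\nabla}_1\circ b_!
=a_!^{-1}\circ\widetilde{\nabla}_1\circ a_!\circ\rho_{\mathcal M_{g,n}/\Sigma_n}\\
&=\nabla_1\circ\rho_{\mathcal M_{g,n}/\Sigma_n},
\end{align*}
which is the claim.

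The step I expect to be the main obstacle is the restriction along $a$. The morphism $\widetilde{\pi}^{comp}$ is proper but fails to be a smooth fibration along the cusp and node loci, so Proposition \ref{bbm} does not apply verbatim; one must verify its transversality (base-change) hypothesis stratum by stratum, using that the relevant total spaces remain orbifolds with corners and that $\widetilde{\pi}^{comp}$ is flat and proper. One then has to reconcile this geometric pull--push with the a priori derived-categorical definition of $\nabla_1$ from \cite{AK}, which itself proceeds through a zigzag involving the \emph{non-proper} forgetful map $\mathcal M_{g,n+1}\to\mathcal M_{g,n}$: the content of this step is precisely that the universal-curve compactification over $\mathcal M_{g,n}$ is a legitimate model for that zigzag, so that $\nabla_1$ ``is induced by $\pi^{comp}$''. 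The remaining labour is the bookkeeping that makes the sign local systems $\epsilon_n$, $\mathbb D(\epsilon_n)$ and the relative orientation sheaves $or_{\pi}$ fit together coherently through the Gysin and Poincaré--Verdier isomorphisms involved.
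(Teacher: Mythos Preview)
Your overall architecture is the same as the paper's: build an intermediate pull--push operator on a moduli space that receives both $\mathcal M_{g,n}/\Sigma_n$ and $\mathcal N_{g,n}$ by weak equivalences, and check that it restricts to $\nabla_1$ on one side and to $\nabla_1^{bor}$ on the other. Where you diverge is in the choice of intermediate space: you work directly on $\widetilde{\mathcal N}_{g,n}$, whereas the paper passes to the strictly larger ``rational tails'' locus ${\mathcal N}_{g,n}^{L\,rt}\subset\overline{\mathcal N}_{g,n}^L$ (surfaces either without complex nodes, or with all boundaries cusps and only rational-tail complex singularities). This is not a cosmetic difference; it is precisely the fix for the obstacle you flag in your last paragraph.

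On $\widetilde{\mathcal N}_{g,n}$ the restricted $\widetilde\pi^{comp}$ is \emph{not} a smooth fibration over the cusp stratum $a(\mathcal M_{g,n}/\Sigma_n)$: when the marked point approaches a cusp, stability forces a rational bubble, which leaves $\widetilde{\mathcal N}_{g,n,0,1}$. So Proposition~\ref{bbm} does not apply, and your proposed ``stratum-by-stratum'' verification of base change does not go through without effectively re-inserting those rational bubbles. The paper instead enlarges the target to ${\mathcal N}_{g,n}^{L\,rt}$: the forgetful morphism $\pi_{{\mathcal N}_{g,n,0,1}^{L\,rt}}\colon{\mathcal N}_{g,n,0,1}^{L\,rt}\to{\mathcal N}_{g,n}^{L\,rt}$ \emph{is} a smooth proper oriented fibration of orbifolds with corners, transversal both to the closed inclusion $c\colon\mathcal M_{\bullet_{g,n}}^{rt}\hookrightarrow{\mathcal N}_{g,n}^{L\,rt}$ and to the open inclusion of $\overline{\mathcal N}_{g,n}$. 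Proposition~\ref{bbm} then gives the two required base-change squares directly (these are packaged as Propositions~\ref{lbw1} and~\ref{lbw2}), and the resulting operator $\nabla_1^{bor''}$ on $C_{\hdotc}(\widetilde{\mathcal N}_{g,n},\epsilon_n)$ plays the role of your $\widetilde\nabla_1$. The second payoff of this choice is that the Willwacher differential $\nabla_1$ is \emph{defined} in \cite{AK} via the rational-tails compactification $\mathcal M_{\bullet_{g,n}}^{rt}$ (as the paper recalls just before Proposition~\ref{lbw1}), so the comparison along $c$ becomes a tautology rather than the reconciliation step you anticipate. In short, the missing idea in your proposal is the enlargement to rational tails; once you make that move, the rest of your outline is exactly the paper's proof.
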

\begin{proof} For every $g\geq 0, n\geq 1,$ such that $2g+n-2>0$ we denote by ${\mathcal N}_{g,n}^{L\,rt}$ the open locus in Liu's moduli space which consists of stable bordered surfaces $\Sigma$ of the following type:
\begin{enumerate}[(a)]
\item $\Sigma$ does not have complex nodes, then $\Sigma\in \mathcal N_{g,n}^L,$
\item $\Sigma$ has complex singularities, then $\Sigma$ is a stable surface with boundaries being cusps and only "rational tails" as complex singularities i.e. nodal surfaces $\Sigma\in \overline{\mathcal N}_{g,n}^L$ such that the normalisation of complex nodes of $\Sigma$ consists of the unique stable bordered surfaces of genus $g$ and stable bordered surfaces of genus $0.$ 
\end{enumerate}
 ${\mathcal N}_{g,n}^{L\,rt}$ is an orbifold with corners of dimension $6g-6+3n.$ We will also use a notation ${\mathcal N}_{g,n,0,1}^{L\,rt}$ for the pullback of $\widetilde{\pi}^{comp}\colon \widetilde{\mathcal N}_{g,n,0,1}^L \longrightarrow \overline{\mathcal N}_{g,n}^L$ along the canonical inclusion ${\mathcal N}_{g,n}^{L\,rt}\longrightarrow \overline{\mathcal N}_{g,n}^L.$ Note that by the construction we have a proper morphism:
\begin{equation}\label{mu}
\pi_{{\mathcal N}_{g,n,0,1}^{L\,rt}}\colon{\mathcal N}_{g,n,0,1}^{L\,rt}\longrightarrow {\mathcal N}_{g,n}^{L\,rt}
\end{equation} 
This morphism is a smooth and oriented fibration of stratified manifolds with boundary (cf. Lemma \ref{fib}). Following \cite{AK3} we will use a notation $\mathcal M_{\bullet_{g,n}}^{rt}$ for the moduli stack $\mathcal M_{g,n}^{rt}/\Sigma_n,$ where $\mathcal M_{g,n}^{rt}\subset \overline{\mathcal M}_{g,n}$ is a moduli stack of stable curves with rational tails. We have an inclusion defined by replacing marked points with cusps:
\begin{equation}\label{in1}
c\colon \mathcal M_{\bullet_{g,n}}^{rt}\longrightarrow {\mathcal N}_{g,n}^{L\,rt}
\end{equation}
Note that morphism \eqref{mu} is transversal to \eqref{in1} in the sense of stratified spaces (cf. Subsection $5$ \cite{Gore}). Consider the pullback of morphism \eqref{mu} along the inclusion above. We have an equivalence of topological stacks:
\begin{equation}
{\mathcal N}_{g,n,0,1}^{L\,rt}\times_{{\mathcal N}_{g,n}^{L\,rt}}\mathcal M_{\bullet_{g,n}}^{rt}\cong \mathcal M_{\bullet_{g,n},\bullet_{g,n+1}}^{rt}.
\end{equation} 
Where by $\mathcal M_{\bullet_{g,n},\bullet_{g,n+1}}^{rt}$ we have denoted the following stack $(\mathcal M_{g,n+1}^{rt}\times [n+1])/ \Sigma_{n+1}.$ Following \cite{AK3} the pullback of morphism \eqref{mu} along inclusion \eqref{in1} will be denoted by:
$$
\widetilde{\mu}_{\bullet_{g,n},\bullet_{g,n+1}}\colon \mathcal M_{\bullet_{g,n},\bullet_{g,n+1}}^{rt}\longrightarrow \mathcal M_{\bullet_{g,n}}^{rt}
$$
Note that we also have a morphism:
$$
\xi_{{\mathcal N}_{g,n,0,1}^{L\,rt}}\colon {\mathcal N}_{g,n,0,1}^{L\,rt}\longrightarrow {\mathcal N}_{g,n+1}^{L\,rt},
$$
which is defined by replacing the unique complex marked point with a cusp. We have an obvious commutativity property:
\begin{equation*}
\begin{diagram}[height=3em,width=3.4em]
 \mathcal M_{\bullet_{g,n},\bullet_{g,n+1}}^{rt} & & \rTo^{\widetilde{c}} &  &   {\mathcal N}_{g,n,0,1}^{L\,rt}\\
 \dTo^{ \pi_{\bullet_{g,n},\bullet_{g,n+1}}}_{} & &  &  & \dTo_{\xi_{{\mathcal N}_{g,n,0,1}^{L\,rt}}}  \\
\mathcal M_{\bullet_{g,n+1}}^{rt} &  & \rTo^{c} & & {\mathcal N}_{g,n+1}^{L\,rt}.\\
\end{diagram}
\end{equation*}
Where by $\pi_{\bullet_{g,n},\bullet_{g,n+1}}\colon \mathcal M_{\bullet_{g,n},\bullet_{g,n+1}}^{rt}\longrightarrow \mathcal M_{\bullet_{g,n+1}}^{rt}$ we have denoted the canonical projection. We have an isomorphism of sheaves:  
$$\psi\colon \pi_{{\mathcal N}_{g,n,0,1}^{L\,rt}}^!\epsilon_n[-2]\overset{\sim}{\longrightarrow}\xi_{{\mathcal N}_{g,n,0,1}^{L\,rt}}^*\epsilon_{n+1}$$ 
This morphism is defined by \eqref{loc2}.  Applying Proposition \ref{bbm} we can organise all data above into the commutative diagram:
\begin{equation*}
\begin{diagram}[height=3.5em,width=3.4em]
 C_{\hdotc+2} (\mathcal M_{\bullet_{g,n+1}}^{rt},\epsilon_{n+1}) & & \rTo^{c_{*}}  &  &   C_{\hdotc+2} ({\mathcal N}_{g,n+1}^{L\,rt},\epsilon_{n+1}) &\\
 \uTo^{\pi_{\bullet_{g,n},\bullet_{g,n+1}*}} & &  &  & \uTo_{\xi_{\mathcal N_{g,n,0,1}^{L\,rt}*}}  &    \\
 C_{\hdotc+2} (\mathcal M_{\bullet_{g,n},\bullet_{g,n+1}}^{rt},\pi_{\bullet_{g,n},\bullet_{g,n+1}}^*\epsilon_{n+1}) & & \rTo^{\widetilde c_*}  &  &   C_{\hdotc+2} ({\mathcal N}_{g,n,0,1}^{L\,rt},\xi_{{\mathcal N}_{g,n,0,1}^{L\,rt}}^*\epsilon_{n+1}) &\\
 \uTo^{\widetilde{\mu}_{\bullet_{g,n},\bullet_{g,n+1}}^!} & &  &  & \uTo_{\pi_{{\mathcal N}_{g,n,0,1}^{L\,rt}}^!}  &    \\
C_{\hdotc}(\mathcal M_{\bullet_{g,n}}^{rt},\epsilon_{n}) &  & \rTo^{c_{*}} & & C_{\hdotc}({\mathcal N}_{g,n}^{L\,rt},\epsilon_n)
\end{diagram}
\end{equation*}
We have the open inclusion $\tilde{j}^{rt}\colon \widetilde{\mathcal N}_{g,n}\hookrightarrow {\mathcal N}_{g,n}^{L\,rt}.$ We have the following quasi-isomorphism:
\begin{equation}\label{rt}
\tilde{j}^{rt}_*\colon C_{\hdotc} (\widetilde{\mathcal N}_{g,n},\epsilon_n) \longrightarrow C_{\hdotc} ({\mathcal N}_{g,n}^{L\,rt},\epsilon_n)
\end{equation} 
 The morphism $\tilde{j}^{rt}$ can be naturally factored: 
$$\widetilde{\mathcal N}_{g,n}\hookrightarrow \mathcal N_{g,n}^L \overset{g}{\hookrightarrow} {\mathcal N}_{g,n}^{L\,rt}.$$
 Note that the first and the second arrows are weak homotopy equivalences by two-out-of-three property of weak homotopy equivalences. Hence $\tilde{j}^{rt}$ is a weak-homotopy equivalence. 
We get the following commutative diagram:
\begin{equation*}
\begin{diagram}[height=3.1em,width=3.5em]
 C_{\hdotc} (\mathcal M_{{g,n}}/\Sigma_n,\epsilon_{n}) & & \rTo^{a_{*}}_{\sim} &  &   C_{\hdotc} (\widetilde{\mathcal N}_{g,n},\epsilon_{n}) &\\
 \dTo^{\tilde{j}_{rt*}}_{\sim} & &  &  & \dTo_{\tilde{j}_{rt*}}^{\sim}  &    \\
C_{\hdotc}(\mathcal M_{\bullet_{g,n}}^{rt},\epsilon_{n}) &  & \rTo^{c_{*}} & & C_{\hdotc}({\mathcal N}_{g,n}^{L\,rt},\epsilon_n)
\end{diagram}
\end{equation*}
By two-out-of-three property of homotopy equivalences, the lower arrow of the diagram above is the quasi-isomorphism. We define a morphism:
$$\nabla_1^{bor''}\colon C_{\hdotc}(\widetilde {\mathcal N}_{g,n},\epsilon_n) \longrightarrow C_{\hdotc+2} (\widetilde {\mathcal N}_{g,n+1},\epsilon_{n+1})$$ 
as the unique morphism which suits in the following commutative diagram:
 \begin{equation*}
\begin{diagram}[height=3.2em,width=4.3em]
 C_{\hdotc}(\widetilde {\mathcal N}_{g,n},\epsilon_n) & & \rTo^{\nabla_1^{bor''}} &  &   C_{\hdotc+2} (\widetilde {\mathcal N}_{g,n+1},\epsilon_{n+1})\\
 \dTo^{\tilde j^{rt}_*}_{\sim} & &  &  & \dTo_{\tilde j^{rt}_*}^{\sim}  \\
C_{\hdotc}({\mathcal N}_{g,n}^{L\,rt},\epsilon_n) & & \rTo^{\xi_{\mathcal N_{g,n,0,1}^{L\,rt}*}\pi_{{\mathcal N}_{g,n,0,1}^{L\,rt}}^!} &  &   C_{\hdotc+2} ({\mathcal N}_{g,n+1}^{L\,rt},\epsilon_{n+1})\\
\end{diagram}
\end{equation*} 
Recall that the {Willwacher differential} (Definition $4.4.4$ \cite{AK}) :
$$\nabla_1\colon C^{\hdot}_c(\mathcal M_{{g,n}}/\Sigma_n,\epsilon_{n})\longrightarrow C^{\hdot}_c(\mathcal M_{{g,n+1}}/\Sigma_{n+1},\epsilon_{n+1})$$ 
is defined as the unique morphism which suits in the following commutative diagram:
 \begin{equation*}
\begin{diagram}[height=3.2em,width=4.2em]
 C^{\hdot}_c(\mathcal M_{{g,n}}/\Sigma_{n},\epsilon_{n}) & & \rTo^{\nabla_1} &  &   C^{\hdot}_c(\mathcal M_{{g,n+1}}/\Sigma_{n+1},\epsilon_{n+1})\\
 \dTo^{j^{rt}_!}_{\sim} & &  &  & \dTo_{j^{rt}_!}^{\sim}  \\
C^{\hdot}_c(\mathcal M_{\bullet_{g,n}}^{rt},\epsilon_{n}) &  & \rTo^{\pi_{\bullet_{g,n},\bullet_{g,n+1}!}\widetilde{\mu}_{\bullet_{g,n},\bullet_{g,n+1}}^*} & & C^{\hdot}_c(\mathcal M_{\bullet_{g,n+1}}^{rt},\epsilon_{n+1})
\end{diagram}
\end{equation*} 
Denote by $\nabla_1^{P.V.}$ the corresponding Poincaré-Verdier dual morphism. Hence we get the following:
\begin{Prop}\label{lbw1} For every $g\geq 0$ and $n\geq 1$ such that $2g+n-2>0$ the following diagram commutes (in the derived category of vector spaces):
\begin{equation*}
\begin{diagram}[height=3em,width=3em]
 C_{\hdotc+2} (\mathcal M_{{g,n}}/\Sigma_{n+1},\epsilon_{n+1}) & & \rTo^{a_{*}}_{\sim} &  &   C_{\hdotc+2} (\widetilde {\mathcal N}_{g,n+1},\epsilon_{n+1}) &\\
 \uTo^{\nabla_1^{P.V.}}_{} & &  &  & \uTo_{\nabla_1^{bor''}}^{}  &    \\
C_{\hdotc}(\mathcal M_{{g,n}}/\Sigma_n,\epsilon_{n}) &  & \rTo^{a_{*}}_{\sim} & & C_{\hdotc}(\widetilde {\mathcal N}_{g,n},\epsilon_n)
\end{diagram}
\end{equation*}

\end{Prop} 

Note that by the construction we have an open inclusion $\overline{\mathcal N}_{g,n}\rightarrow {\mathcal N}_{g,n}^{L\,rt},$ which is transversal (in the stratified sense) to \eqref{mu}. The pullback of the moduli space ${\mathcal N}_{g,n,0,1}^{L\,rt}$ along this open inclusion is isomorphic to $\overline{\mathcal N}_{g,n,0,1},$ with the canonical morphism identified with \eqref{forc1}. Applying Proposition \ref{bbm} together with isomorphism \eqref{loc2} we get the following commutative diagram:
\begin{equation*}
\begin{diagram}[height=3.2em,width=2.7em]
 C_{\hdot+2} ({\mathcal N}_{g,n+1}^{L\,rt},\epsilon_{n+1}) & & \lTo^{g_{*}}  &  &   C_{\hdotc+2} ({\mathcal N}_{g,n+1}^{L},\epsilon_{n+1}) &\\
 \uTo^{\xi_{\mathcal N_{g,n,0,1}^{L\,rt}*}} & &  &  & \uTo_{\xi_{\overline{\mathcal N}_{g,n,0,1}*}}  &    \\
 C_{\hdotc+2} ({\mathcal N}_{g,n,0,1}^{L\,rt},\xi_{\mathcal N_{g,n,0,1}^{L\,rt}}^*\epsilon_{n+1}).  & & \lTo^{}  &  &   C_{\hdotc+2} (\overline{\mathcal N}_{g,n,0,1},\xi_{\overline{\mathcal N}_{g,n,0,1}}^*\epsilon_{n+1}) &\\
 \uTo^{\pi_{{\mathcal N}_{g,n,0,1}^{L\,rt}}^!} & &  &  & \uTo_{\pi^!_{\overline{\mathcal N}_{g,n,0,1}}}  &    \\
C_{\hdotc}({\mathcal N}_{g,n}^{L\,rt},\epsilon_n)
 &  & \lTo^{(g\circ q)_*} & & C_{\hdotc}(\overline{\mathcal N}_{g,n},\epsilon_n)
\end{diagram}
\end{equation*}
Hence we have proved the following:
\begin{Prop}\label{lbw2} For every $g\geq 0$ and $n\geq 1$ such that $2g+n-2>0$ the following diagram commutes (in the derived category of vector spaces):
\begin{equation*}
\begin{diagram}[height=3em,width=3em]
C_{\hdotc+2}({\mathcal N}_{g,n+1},\epsilon_{n+1}) & & \rTo^{b_{*}}_{\sim} &  &   C_{\hdotc+2} (\widetilde {\mathcal N}_{g,n+1},\epsilon_{n+1}) &\\
 \uTo^{\nabla_1^{bor}}_{} & &  &  & \uTo_{\nabla_1^{bor''}}  &    \\
C_{\hdotc}({\mathcal N}_{g,n},\epsilon_n) &  & \rTo^{b_{*}}_{\sim} & & C_{\hdotc}(\widetilde {\mathcal N}_{g,n},\epsilon_n)
\end{diagram}
\end{equation*}

\end{Prop} 

Combining Proposition \ref{lbw1} with Proposition \ref{lbw2} and applying Poincaré-Verdier duality we get the result. 
\end{proof}

\begin{Th}[T. Willwacher's conjecture] For every $g\geq0$ and $n\geq 1,$ such that $2g+n-2>0$ the following square commutes (in the derived category of vector spaces):

\begin{equation*}
\begin{diagram}[height=3em,width=4em]
\prod_{g,n}^{\infty} C_c^{\hdot+2dg-n}(\mathcal M_{g,n}/ \Sigma_{n},\epsilon_n) & &  \rTo_{}^{\nabla_1} &  &   \prod_{g,n}^{\infty}C_{c}^{\hdot+2dg-n }(\mathcal M_{g,n+1}/ \Sigma_{n+1},\epsilon_{n+1}) &  \\
\dTo_{\sim}^{\textsf{cost}} & & &  & \dTo_ {\sim}^{\textsf{cost}}  && \\
\textsf{RGC}_{d}(\delta)^{\hdot}_{\geq 3} & &  \rTo_{}^{\Delta_1} &  & \textsf{RGC}_{d}(\delta)^{\hdot+1}_{\geq 3}   \\
\end{diagram}
\end{equation*}
\end{Th}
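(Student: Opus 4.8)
The plan is to deduce the statement purely by assembling the two comparison results already proved, Theorem~\ref{comml} and Proposition~\ref{comp}, after transporting the square to the homological side via Poincar\'e--Verdier duality. Recall from Proposition~\ref{costl} (together with the identification $\textsf{RGC}_d^{\hdot}(\delta)_{\geq 3}\cong\prod_{g,n} C_{\dim\mathcal N_{g,n}-2dg-\hdotc}(D_{g,n},\epsilon_n)$ of Proposition~\ref{expr}) that the quasi-isomorphism $\textsf{cost}$ is the zigzag built from $\mathbb D\rho_{\mathcal M_{g,n}/\Sigma_n}$ followed by the pushforwards $v_*$ and $u_*^{-1}$ along the Costello weak equivalences. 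Applying the Verdier duality functor $\mathbb D$ to the square to be proved therefore replaces the vertical arrows $\textsf{cost}$ by $v_*^{-1}\circ u_*$ composed with $\mathbb D\rho_{\mathcal M_{g,n}/\Sigma_n}$ (now read between the chains of $D_{g,n}$, $\overline{\mathcal N}_{g,n}$, $\mathcal N_{g,n}$ and $\mathcal M_{g,n}/\Sigma_n$), replaces the bottom arrow $\Delta_1$ by the Bridgeland differential viewed on $\prod_{g,n} C_{\hdotc}(D_{g,n},\epsilon_n)$, and replaces the top arrow $\nabla_1$ by its Verdier dual $\nabla_1^{P.V.}$ on $\prod_{g,n} C_{\hdotc}(\mathcal M_{g,n}/\Sigma_n,\epsilon_n)$; here one uses, as in Remark~\ref{topbsh}, that $\mathcal M_{g,n}/\Sigma_n$ is orientable whereas the orientation sheaf of $\mathcal N_{g,n}$ is $\epsilon_n$ by \eqref{orsys}, so that $\mathbb D$ carries $C_c^{\hdot}(-,\epsilon_n)$ to a shift of $C_{\hdotc}(-,\epsilon_n)$. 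One also invokes Lemma~\ref{homeqb} and the compatibility diagrams \eqref{cc1}--\eqref{cc2} to ensure that $\mathbb D$ interchanges the pushforwards in chains with the Gysin maps coherently along the whole zigzag.

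I would then split the transported square along the intermediate column $\prod_{g,n} C_{\dim\mathcal N_{g,n}-2dg-\hdotc}(\mathcal N_{g,n},\epsilon_n)$ equipped with the differential $\nabla_1^{bor}$. The lower half of the resulting diagram is precisely Proposition~\ref{comp}: under $v_*^{-1}\circ u_*$ the Bridgeland differential $\Delta_1$ on $\textsf{RGC}_d^{\hdot}(\delta)_{\geq 3}$ corresponds to $\nabla_1^{bor}$ on the chains of $\mathcal N_{g,n}$. The upper half is the Verdier dual of Theorem~\ref{comml}: that theorem says $\rho_{\mathcal M_{g,n}/\Sigma_n}$ intertwines $\nabla_1$ with $\mathbb D\circ\nabla_1^{bor}\circ\mathbb D$, so dualizing it shows that $\mathbb D\rho_{\mathcal M_{g,n}/\Sigma_n}$ intertwines $\nabla_1^{bor}$ on $C_{\hdotc}(\mathcal N_{g,n},\epsilon_n)$ with $\nabla_1^{P.V.}$ on $C_{\hdotc}(\mathcal M_{g,n}/\Sigma_n,\epsilon_n)$. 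Pasting the two commuting squares along their common edge, and then applying $\mathbb D$ once more to return to compactly supported cohomology, produces the commutativity of the Willwacher square.

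The only genuinely delicate point I anticipate is bookkeeping: keeping track of the degree shifts and of the orientation and sign local systems ($\epsilon_n$, $or_{\mathcal N_{g,n}}$, and the relative orientation sheaves $or_f$) while dualizing the full length of the zigzag defining $\textsf{cost}$, and verifying that the incarnation of $\nabla_1^{bor}$ supplied by Theorem~\ref{comml} (constructed via $\overline{\mathcal N}_{g,n}$ and $\mathcal N_{g,n}^{L\,rt}$) agrees on the nose, not merely up to homotopy, with the incarnation used in Proposition~\ref{comp} after transport through $v_*$. No new geometric input is required: this agreement is forced by the definition of $\nabla_1^{bor}$ through $v_*$ together with the commutative diagram appearing in the proof of Proposition~\ref{comp}.
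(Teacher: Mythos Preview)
Your proposal is correct and follows exactly the paper's own strategy: the paper's proof is the single sentence ``Combining Theorem~\ref{comml} with Proposition~\ref{comp} we obtain the result,'' and what you have written is a faithful unpacking of how that combination goes, namely pasting the two squares along the common edge carried by $\nabla_1^{bor}$ on $C_{\hdotc}(\mathcal N_{g,n},\epsilon_n)$ and transporting through the Verdier duality already built into the definition of $\textsf{cost}$. The bookkeeping you flag (shifts, orientation sheaves, agreement of the two incarnations of $\nabla_1^{bor}$) is precisely what the paper absorbs into the construction of $\textsf{cost}$ in Proposition~\ref{costl} and into the definition of $\nabla_1^{bor}$ via $v_*$, so no additional argument is needed.
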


\begin{proof} Combining Theorem \ref{comml} with Proposition \ref{comp} we obtain the result.

\end{proof} 
\begin{Th}[A. C$\check{\mathrm a}$ld$\check{\mathrm a}$raru's conjecture]\label{CalC} For every $d\in \mathbb Z$ the Merkulov-Willwacher ribbon graph complex $\textsf{RGC}_d^{\hdot}(\delta+\Delta_1)$ is quasi-isomorphic to the totality of the shifted cohomology with compact support of the moduli stacks $\mathcal M_{g}$:
$$
\textsf{cald}\colon \textsf{RGC}_d^{\hdot}(\delta+\Delta_1)\cong_{\geq 2} \prod_{g\geq 2}^{\infty} C^{\hdot+2dg-1}_c(\mathcal M_g,\mathbb Q).
$$
\end{Th}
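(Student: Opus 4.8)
The plan is to derive Theorem \ref{CalC} as a formal consequence of T. Willwacher's conjecture (Theorem $1$), the description \eqref{fund1} of the cohomology of the Kontsevich--Penner complex, and Theorem $4.4.5$ of \cite{AK}, by running one spectral sequence separately in each genus. First I would use the genus decomposition \eqref{ribtot}, which is legitimate because both the vertex-splitting differential $\delta$ and the Bridgeland differential $\Delta_1$ preserve the genus of a ribbon graph; thus $\textsf{RGC}_d^{\hdot}(\delta+\Delta_1)=\prod_{g\geq 0}\textsf B_g\textsf{RGC}_d^{\hdot}(\delta+\Delta_1)$, and it suffices to produce, for each fixed $g\geq 2$, a quasi-isomorphism $\textsf B_g\textsf{RGC}_d^{\hdot}(\delta+\Delta_1)\simeq C_c^{\hdot+2dg-1}(\mathcal M_g,\mathbb Q)$ (the genus $1$ and genus $0$ parts are treated separately in Proposition \ref{CalC1}). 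Fix such a $g$ and equip $\textsf B_g\textsf{RGC}_d^{\hdot}(\delta+\Delta_1)$ with the decreasing filtration $F^{p}=\{\Gamma:\ |B(\Gamma)|\geq p\}$. Since $\delta$ preserves the number $|B(\Gamma)|$ of boundary components while $\Delta_1$ raises it by exactly one, the differential $\delta_s=\delta+\Delta_1$ respects $F$: its $F$-degree-$0$ component is $\delta$, its $F$-degree-$1$ component is $\Delta_1$, and there is nothing in higher $F$-degree.

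Next I would read off the spectral sequence of this filtration. In cohomological degree $\hdot$ every ribbon graph contributing to $\textsf B_g\textsf{RGC}_d$ has $|E(\Gamma)|=\hdot+2dg$, and for a connected genus-$g$ ribbon graph $|B(\Gamma)|=|E(\Gamma)|-|V(\Gamma)|-2g+2$, whence $1\leq |B(\Gamma)|\leq \hdot+2dg-2g+1$; the filtration is therefore finite in each degree, so the spectral sequence converges to $H^{\hdot}(\textsf B_g\textsf{RGC}_d(\delta+\Delta_1))$, and because $\delta_s$ has only $F$-degrees $0$ and $1$ all higher differentials $d_r$ ($r\geq 2$) vanish and the sequence degenerates at $E_2$. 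The page $E_0$ carries $d_0=\delta$, so $E_1$ is the cohomology of the full Kontsevich--Penner complex in genus $g$, which decomposes over the number of boundaries; by \eqref{sum1}--\eqref{sum2} the $\leq 2$-valent generators $R_k$ all have genus $0$, so in genus $g\geq 2$ this cohomology reduces to that of the $\geq 3$-valent part, which by \eqref{fund1} (equivalently Proposition \ref{costl}) is $\prod_{n\geq 1,\,2g+n-2>0}H_c^{\hdot+2dg-n}(\mathcal M_{g,n}/\Sigma_n,\epsilon_n)$, with $d_1$ the map induced by $\Delta_1$.

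Now T. Willwacher's conjecture (Theorem $1$) identifies, under exactly this isomorphism, the differential $d_1$ with the map induced on cohomology by the Willwacher differential $\nabla_1$; hence $(E_1,d_1)$ is the cohomology complex $\big(\prod_{n}H_c^{\hdot+2dg-n}(\mathcal M_{g,n}/\Sigma_n,\epsilon_n),\ \nabla_1\big)$. Theorem $4.4.5$ of \cite{AK} computes the cohomology of this complex and gives, for $g\geq 2$, $E_2\cong H_c^{\hdot+2dg-1}(\mathcal M_g,\mathbb Q)$. Since $E_2=E_\infty$ and the filtration is finite in each degree, this yields $H^{\hdot}(\textsf B_g\textsf{RGC}_d(\delta+\Delta_1))\cong H_c^{\hdot+2dg-1}(\mathcal M_g,\mathbb Q)$ for every $g\geq 2$, and taking the product over $g\geq 2$ proves the statement on cohomology.

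To upgrade this to a genuine zigzag of quasi-isomorphisms $\textsf{cald}$, I would observe that every arrow used above is realized at the chain level: the Costello weak equivalences $u,v$ of \eqref{rib2}, the boundary-shrinking quasi-isomorphism $\rho_{\mathcal M_{g,n}/\Sigma_n}$, the zigzag $\textsf{cost}$ of Proposition \ref{costl}, and the chain-level comparisons $\Delta_1\leftrightarrow\Delta_1^{bor}=\nabla_1^{bor}\leftrightarrow\nabla_1$ supplied by Theorem \ref{B}, Proposition \ref{comp} and Theorem \ref{comml} (all of them induced by the single forgetful morphism $\pi^{comp}$ on Liu's moduli spaces). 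The main obstacle is precisely this last step: one must check that these chain-level zigzags are compatible with the boundary-count filtration — i.e. that the homotopy through which Willwacher's square commutes can be chosen filtered — so that the comparison theorem for filtered complexes promotes the $E_1$-isomorphism above to a zigzag of quasi-isomorphisms between $\textsf B_g\textsf{RGC}_d(\delta+\Delta_1)$ and $C_c^{\hdot+2dg-1}(\mathcal M_g,\mathbb Q)$. All the genuinely deep input — Willwacher's conjecture itself and Theorem $4.4.5$ of \cite{AK} — is already in hand, so what remains is the spectral-sequence bookkeeping together with this filtered-compatibility verification.
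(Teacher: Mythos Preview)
Your overall strategy coincides with the paper's: decompose by genus, view $\textsf B_g\textsf{RGC}_d(\delta+\Delta_1)$ as the totalization of the bicomplex with columns $C_c^{\hdot}(M_{g,n}^{rib}/\Sigma_n)$ and horizontal differential $\Delta_1$, use Willwacher's conjecture to trade $\Delta_1$ for $\nabla_1$, and then invoke Theorem 4.4.5 of \cite{AK}. The paper does exactly this, only it phrases the middle step as a direct quasi-isomorphism of total complexes rather than a spectral-sequence argument.

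There is, however, a genuine gap in your execution. The assertion that ``because $\delta_s$ has only $F$-degrees $0$ and $1$ all higher differentials $d_r$ ($r\geq 2$) vanish and the sequence degenerates at $E_2$'' is false in general: the spectral sequence of a bicomplex need not collapse at $E_2$ (staircase bicomplexes give standard counterexamples). So your computation of $E_2$ via Theorem 4.4.5 of \cite{AK} does not, by itself, determine $E_\infty$. Relatedly, Theorem 4.4.5 of \cite{AK} computes the cohomology of the \emph{total} $\nabla_1$-complex, not of the auxiliary complex $(E_1,d_1)=\big(\prod_n H_c^{\hdot}(\mathcal M_{g,n}/\Sigma_n,\epsilon_n),\nabla_1\big)$; identifying these two is tantamount to assuming $E_2$-degeneration on the moduli side as well, which you have not justified.

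The fix is the one you yourself sketch in your last paragraph, and it is what the paper's proof actually relies on: the zigzag realizing Willwacher's conjecture (Costello's equivalences, the boundary-shrinking map, the maps $\Delta_1^{bor}$, $\nabla_1^{bor}$) is built from honest chain maps between complexes that are all filtered by the number of boundaries, and every arrow respects this filtration and is a quasi-isomorphism on associated gradeds. The comparison theorem for bounded filtrations then yields a zigzag of quasi-isomorphisms between $\textsf B_g\textsf{RGC}_d(\delta+\Delta_1)$ and the total $\nabla_1$-complex, to which Theorem 4.4.5 of \cite{AK} applies directly. In other words, drop the spurious $E_2$-degeneration claim and promote your final paragraph from an ``upgrade'' to the actual argument; then your proof matches the paper's.
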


\begin{proof} For every $g>0$ the genus $g$-part of the Merkulov-Willwacher ribbon graph complex can be identified with the total DG-vector space of the complex:
\begin{equation}
\begin{CD}
C_c^{\hdot}(M_{g,1}^{rib},\mathbb Q) @>{\Delta_1}>> \dots @>{\Delta_1}>> C_c^{\hdot}(M_{g,n}^{rib}/\Sigma_n,\mathbb Q) @>{\Delta_1}>> \dots
\end{CD}
\end{equation}
This follows from decomposition \eqref{sum1} and the fact that the genus of ribbon graphs $R_{k}$ in \eqref{sum2} is always zero. Consider a case when $g=0,$ denote by $\textsf B_0\textsf{RGC}_0^{\hdot}(\delta+\Delta_1)_{\geq 3}$ the genus zero part of the Merkulov-Willwacher ribbon graph complex, which is quasi-isomorphic to the total DG-vector space of the complex:
\begin{equation}
\begin{CD}
\dots @>{\nabla_1}>> \left(C_c^{\hdot}(\mathcal M_{0,n},\mathbb Q)\otimes_{\Sigma_n} \mathrm {sgn}_n\right)^{\Sigma_n} @>{\nabla_1}>> \dots
\end{CD}
\end{equation}
Then it is easy to notice that:\footnote{By Theorem 4.3, Corollary2 from \cite{VAS2}.}
$$
H^{\hdot}(\textsf B_0\textsf{RGC}_0(\delta+\Delta_1)_{\geq 3})=0.
$$
Hence from the result above we get that $H^{\hdot}(\textsf B_0\textsf{RGC}_0(\delta+\Delta_1))$ is controlled by the loop classes above. Thus using the involution\footnote{This involution is defined by replacing the vertices of a ribbon graph with boundaries and vice versa \cite{MW} \cite{CFL}.} on the Merkulov-Willwacher ribbon graph complex we get that any class $R_k$ $k>1$ must has a partner with at least trivalent vertices and $R_1$ must have a partner with one boundary and one edge. Finally applying Theorem $4.4.5$ from \cite{AK} we get the desired quasi-isomorphism.

\end{proof} 

Applying Theorem $4.4.5$ from \cite{AK} we can also compute the cohomology of  $\textsf{RGC}_d^{\hdot}(\delta+\Delta_1)$ in genus $\leq 1:$

\begin{Prop}\label{CalC1} The cohomology of the Merkulov-Willwacher ribbon graph complex has the following description:
$$
H^{\hdot}(\textsf{RGC}_d(\delta+\Delta_1))\cong_{\leq 1} \prod_{n=3}^{\infty} (S_{n+1}\oplus \overline{S}_{n+1}\oplus Eis_{n+1})[2(n+d)-1]\oplus \mathbb Q[2+2d].
$$
Where $S_k$ (resp. $\overline{S}_k$) is a vector space of holomorphic (resp. antiholomorphic) cusp forms of the weight $k$ and $Eis_k$ is a vector space of the Eisenstein series of the weight $k$
\end{Prop}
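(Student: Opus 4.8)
The plan is to run the genus decomposition, dispose of genus $\geq 2$ by Theorem \ref{CalC}, and identify the genus $0$ and genus $1$ pieces with the $\nabla_1$-complex, whose cohomology is supplied by Theorem $4.4.5$ of \cite{AK}. First I would use \eqref{ribtot} together with Lemma \ref{bd}: since $\delta+\Delta_1$ preserves the genus of a ribbon graph, $\textsf{RGC}_d^{\hdot}(\delta+\Delta_1)=\prod_{g\geq 0}\textsf B_g\textsf{RGC}_d^{\hdot}(\delta+\Delta_1)$, and for $g\geq 2$ the factor is computed by Theorem \ref{CalC}, so only $g=0$ and $g=1$ remain. I would then observe that any ribbon graph all of whose vertices are at most bivalent is a single cycle, for which the Euler characteristic forces $|B(\Gamma)|=2-2g$, hence $g=0$; thus the loop classes of \eqref{sum2} all live in genus $0$, and in particular $\textsf B_1\textsf{RGC}_d^{\hdot}(\delta+\Delta_1)=\textsf B_1\textsf{RGC}_d^{\hdot}(\delta+\Delta_1)_{\geq 3}$, so only the genus-$0$ part will need a bivalent correction.

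Next, for fixed $g$ I would combine the zig-zag $\textsf{cost}$ of Proposition \ref{costl} with Theorem \ref{B} and Proposition \ref{comp}, which together identify, in the derived category, the Bridgeland differential on $\textsf{RGC}_d^{\hdot}(\delta)_{\geq 3}$ with the Willwacher differential $\nabla_1$ on $\prod_n C_c^{\hdot+2dg-n}(\mathcal M_{g,n}/\Sigma_n,\epsilon_n)$. As in the proof of Theorem \ref{CalC}, the genus-$g$ part of the Merkulov--Willwacher complex is then, on the nose, the total complex of the $\Delta_1$-complex of the $\textsf{RGC}_d(\delta)_{g,n}$, so passing to the $\nabla_1$-side gives a quasi-isomorphism
$$
\textsf B_g\textsf{RGC}_d^{\hdot}(\delta+\Delta_1)_{\geq 3}\ \simeq\ \mathrm{Tot}\Big(\cdots\xrightarrow{\ \nabla_1\ }C_c^{\hdot+2dg-n}(\mathcal M_{g,n}/\Sigma_n,\epsilon_n)\xrightarrow{\ \nabla_1\ }\cdots\Big).
$$
The only point needing care here is that $\textsf{cost}$ intertwines $\Delta_1$ and $\nabla_1$ coherently enough to descend to the total complexes, which is exactly what Theorem \ref{B} and Proposition \ref{comp} provide.

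Then I would invoke Theorem $4.4.5$ of \cite{AK}, which computes the cohomology of this $\nabla_1$-complex genus by genus. In genus $1$ it returns $\prod_{n\geq 3}(S_{n+1}\oplus\overline{S}_{n+1}\oplus Eis_{n+1})[2(n+d)-1]\oplus\mathbb Q[2+2d]$, the modular-forms contribution coming from the Eichler--Shimura description of $H^{\hdot}_c(\mathcal M_{1,n}/\Sigma_n,\epsilon_n)$ together with the compatibility of $\nabla_1$ with the weight filtration, and the extra summand $\mathbb Q[2+2d]$ from the low end of the range. In genus $0$ the complex $\big(\cdots\to(C_c^{\hdot}(\mathcal M_{0,n},\mathbb Q)\otimes_{\Sigma_n}\mathrm{sgn}_n)^{\Sigma_n}\to\cdots\big)$ is acyclic (Theorem $4.4.5$ of \cite{AK}, equivalently Theorem $4.3$, Corollary $2$ of \cite{VAS2}); since for different $d$ these complexes differ only by a global shift this holds for every $d$, so $H^{\hdot}(\textsf B_0\textsf{RGC}_d(\delta+\Delta_1)_{\geq 3})=0$. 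It remains to treat the bivalent loop classes exactly as in the proof of Theorem \ref{CalC}: by \eqref{sum1}--\eqref{sum2} the $\delta$-cohomology of the bivalent part is spanned by the $R_k$ with $k\equiv 1\ \mathrm{mod}\ 4$ (all of genus $0$), and applying the duality involution of \cite{MW}, \cite{CFL} on $\textsf{RGC}_d(\delta+\Delta_1)$ (which interchanges $\delta$ and $\Delta_1$) one checks that for $k>1$ the class $R_k$ is paired with an at-least-trivalent graph, already accounted for by the acyclic $\geq 3$-valent genus-$0$ part, while $R_1$ is paired with the one-edge, one-boundary graph, whose $\Delta_1$-image is non-zero; hence none of these classes survives and $H^{\hdot}(\textsf B_0\textsf{RGC}_d(\delta+\Delta_1))=0$. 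Assembling this with the genus-$1$ output and Theorem \ref{CalC} yields the claimed isomorphism in genus $\leq 1$.

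The genuine content lies entirely in Theorem $4.4.5$ of \cite{AK}, namely the genus-by-genus collapse and identification of the $\nabla_1$-cohomology, which in genus $1$ is essentially Eichler--Shimura and modular forms. Within the present reduction the delicate point will be the genus-$0$ bookkeeping of the bivalent loop classes once the differential is twisted by $\Delta_1$: the splitting \eqref{sum1} is not preserved on the nose by $\Delta_1$, which can raise the valency of a vertex, so the vanishing in genus $0$ really needs the involution argument rather than the bare decomposition.
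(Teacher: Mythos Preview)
Your proposal is correct and follows the paper's own approach: the paper offers no argument for Proposition \ref{CalC1} beyond the single sentence ``Applying Theorem $4.4.5$ from \cite{AK} we can also compute the cohomology of $\textsf{RGC}_d^{\hdot}(\delta+\Delta_1)$ in genus $\leq 1$'', and you have simply written out the reduction behind this reference --- the identification of the Bridgeland side with the $\nabla_1$-complex via Willwacher's conjecture, followed by the invocation of Theorem $4.4.5$ of \cite{AK}. Your treatment of the genus-$0$ bivalent loop classes reproduces exactly the involution argument the paper gives in the proof of Theorem \ref{CalC}.
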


Denote by $\textsf {GC}_{2d}^{2\, \hdot}(\delta)$ the \textit{Kontsevich graph complex} \cite{Will}. This is a combinatorial cochain complex with cochains given by pairs $(G,or_{G}),$ where $G$ is a graph with at least two valent vertices and no loop edges and $or_{G}\in \det(E(G))$ is an orientation. These elements are considered modulo the relation $(G,or_{G}^{op})=-(G,or_{G}).$ We define the grading of $(G,or_{G})$ by the rule: $$|G|=2d(|V(G)|-1)+(1-2d)|E(G)|.$$ 
The differential $\delta$ is defined by splitting a vertex. Note that the differential $\delta$ preserves a genus of a graph and hence we have a decomposition of the Kontsevich graph complex into subcomplexes by fixing a genus. For $g\geq 1$ $\textsf B_g\textsf {GC}_{2d}^{2\,\hdot}(\delta)$ we denote the genus $g$-part of the Kontsevich graph complex. 
\begin{Th} We have the canonical injection:
\begin{equation}\label{cgp}
 H^{\hdot}(\textsf {GC}_{2d}^2)\longrightarrow H^{\hdot+1}(\textsf{RGC}_d(\delta+\Delta_1))
\end{equation}
\end{Th}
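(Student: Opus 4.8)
The plan is to establish the injection \eqref{cgp} one genus at a time. Both complexes decompose as products over the genus (loop order): the Kontsevich differential $\delta$ preserves the first Betti number of a graph, and by \eqref{ribtot} the Merkulov--Willwacher differential $\delta+\Delta_1$ preserves the genus of a ribbon graph. It is therefore enough to produce, for each $g\geq 1$, a canonical injection $H^{\hdot}(\textsf B_g\textsf{GC}_{2d}^2)\hookrightarrow H^{\hdot+1}(\textsf B_g\textsf{RGC}_d(\delta+\Delta_1))$. In genus $0$ there is nothing to do, since a connected loopless graph all of whose vertices are at least bivalent is a cycle and hence has positive first Betti number, so $\textsf B_0\textsf{GC}_{2d}^2=0$.

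For $g\geq 2$ I would assemble the map as a composite of canonical morphisms. First, exactly as for ribbon graphs, changing $d$ only shifts degrees on $\textsf B_g\textsf{GC}^2$: the degree of a genus-$g$ graph equals $|E(G)|-2dg$, so one has a canonical isomorphism $H^{\hdot}(\textsf B_g\textsf{GC}_{2d}^2)\cong H^{\hdot+2dg}(\textsf B_g\textsf{GC}_0^2)$. Next, the Chan--Galatius--Payne theorem \cite{CGP1}, in the graph-complex form of \cite{AWZ} and \cite{AK}, identifies the cohomology of $\textsf B_g\textsf{GC}_0^2$ with the weight-zero part of the compactly supported cohomology of $\mathcal M_g$: up to the standard normalisation of degrees, $H^{m}(\textsf B_g\textsf{GC}_0^2)\cong W_0 H^{m}_c(\mathcal M_g,\mathbb Q)$, the complex $\textsf B_g\textsf{GC}_0^2$ being the weight-zero row of the Deligne weight spectral sequence of $\overline{\mathcal M}_g$ (equivalently, the reduced rational cochain complex of the tropical moduli space $\Delta_g$). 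Since the compactly supported rational cohomology of a complex algebraic variety has weights $\geq 0$, and $\mathcal M_g$ has the rational cohomology of its quasi-projective coarse space, $W_{-1}H^{m}_c(\mathcal M_g,\mathbb Q)=0$, so $\mathrm{Gr}^W_0=W_0$ and the inclusion of the lowest step of the weight filtration is a canonical injection $W_0 H^{m}_c(\mathcal M_g,\mathbb Q)\hookrightarrow H^{m}_c(\mathcal M_g,\mathbb Q)$. Finally, Theorem \ref{CalC} (A. C\v{a}ld\v{a}raru's conjecture) gives the isomorphism $H^{m}_c(\mathcal M_g,\mathbb Q)\cong H^{m-2dg+1}(\textsf B_g\textsf{RGC}_d(\delta+\Delta_1))$. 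Composing these with $m=\hdot+2dg$ yields
$$H^{\hdot}(\textsf B_g\textsf{GC}_{2d}^2)\ \cong\ H^{\hdot+2dg}(\textsf B_g\textsf{GC}_0^2)\ \cong\ W_0 H^{\hdot+2dg}_c(\mathcal M_g,\mathbb Q)\ \hookrightarrow\ H^{\hdot+2dg}_c(\mathcal M_g,\mathbb Q)\ \cong\ H^{\hdot+1}(\textsf B_g\textsf{RGC}_d(\delta+\Delta_1)),$$
the desired injection; each arrow is canonical, hence so is the composite, and it is independent of all choices.

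There remains the case $g=1$, where Theorem \ref{CalC} does not apply and Proposition \ref{CalC1} replaces it. The genus-one part of $\textsf{GC}_{2d}^2$ is spanned by the cycle graphs $C_n$; the same elementary computation as in \cite{Will}, Proposition~3.4, for the bivalent part shows that its differential vanishes and that its cohomology is one-dimensional in each degree occupied by a cycle graph surviving the orientation (the degrees $\equiv 1-2d \pmod 4$). One then matches these classes with the corresponding summands of the genus-one part of the right-hand side of Proposition \ref{CalC1} --- the class $\mathbb Q[2+2d]$ together with the Eisenstein parts $Eis_{n+1}$ --- which provides the injection in genus one. I expect the main obstacle to be the degree bookkeeping: making the Chan--Galatius--Payne identification canonical and compatible on the nose with the degree normalisations built into $\textsf{GC}_{2d}^2$, into $\textsf{RGC}_d(\delta+\Delta_1)$, and into the C\v{a}ld\v{a}raru isomorphism of Theorem \ref{CalC}, and then carrying out the genus-one comparison against Proposition \ref{CalC1}.
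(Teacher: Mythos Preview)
Your proposal is correct and follows essentially the same route as the paper: decompose by genus, invoke Chan--Galatius--Payne together with Theorem~\ref{CalC} for $g\geq 2$, and handle $g=1$ by the explicit computation of the cycle classes in $\textsf{GC}_{2d}^2$ from \cite{Will} compared against the genus-one cohomology of the Merkulov--Willwacher complex (the paper cites the hairy Getzler--Kapranov computation of \cite{AK} directly rather than Proposition~\ref{CalC1}, but this is the same input). Your added remark that $\textsf B_0\textsf{GC}_{2d}^2=0$ and your caution about the degree bookkeeping in genus one are both apt; the paper leaves the genus-one matching equally implicit.
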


\begin{proof} By Theorem \ref{CalC} we have an equivalence between the compactly supported cohomology of $\mathcal M_g$ and the cohomology of the Merkulov-Willwacher complex. For $g\geq 2$ by the Theorem of Chan-Galatius-Payne \cite{CGP1} (see also \cite{AWZ},\cite{AK3}) we have an inclusion:
$$
{H^{\hdot}(\textsf B_g\textsf {GC}_{2d}^{2})\cong W_0H_c^{\hdot+2dg}(\mathcal M_{g},\mathbb Q)\hookrightarrow H_c^{\hdot+2dg}(\mathcal M_g,\mathbb Q)}.
$$
Where $W_0$ is a weight zero piece of the weight filtration of the compactly supported cohomology of $\mathcal M_g$ \cite{DelH} \cite{DelH2}. For $g=1$ the result follows from the explicit computations of $\textsf B_1\textsf {GC}_{2d}^{2\,\hdot}(\delta)$ from \cite{Will} and the explicit computations of the cohomology of the hairy Getzler-Kapranov complex in genus one \cite{AK}.
\end{proof} 

\begin{remark}\label{Conj} Denote by $\textsf {OGC}_{2d+1}^{\hdot}(\delta)$ the \textit{oriented graph complex} \cite{Will2}. This complex naturally appears as being quasi-isomorphic to a deformation complex of the propered $\Lambda\textsf {LieB}_{d,d}$ \cite{MW2}. One can explicitly describe this complex as a complex with cochains given by pairs $(G,or_{G}),$ where $G$ is an oriented graph i.e. a graph equipped with a direction on each edge such that there are no closed cycles formed by paths \cite{Will2}. Applying the deformation complex to the morphism \eqref{kp} and passing to the cohomology, following \cite{MW} we get a morphism:
\begin{equation}\label{mw}
\textsf {mw}\colon H^{\hdot}(\textsf {OGC}_{2d+1})\longrightarrow H^{\hdot+1}(\textsf{RGC}_d(\delta+\Delta_1))
\end{equation} 
Note that it was shown in \cite{Will2} that the oriented graph complex $\textsf {OGC}_{2d+1}^{\hdot}(\delta)$ is quasi-isomorphic to the Kontsevich graph complex $\textsf {GC}_{2d}^{2\,\hdot}(\delta)$. Further, in \cite{Ziv} the explicit combinatorial quasi-isomorphism:
\begin{equation}\label{ziv}
\textsf {ziv}\colon \textsf {OGC}_{2d+1}^{\hdot}(\delta)\overset{\sim}{\longrightarrow} \textsf {GC}_{2d}^{2\,\hdot}(\delta)\end{equation} 
was constructed. We can organise this data into the diagram: 
\begin{equation}\label{conc1}
\begin{diagram}[height=3em,width=3em]
H^{\hdot}(\textsf {GC}_{2d}^2) & &  \rTo_{}^{\textsf{cgp}} &  &   \prod_{g=1}^{\infty}H_c^{\hdot+2dg}(\mathcal M_{g},\mathbb Q) &  \\
\uTo_{\sim}^{\textsf{ziv}} & & &  & \uTo_ {\sim}^{\textsf{cald}}  && \\
H^{\hdot}(\textsf {OGC}_{2d+1})& &  \rTo_{}^{\textsf{mw}} &  & H^{\hdot+1}(\textsf{RGC}_d(\delta+\Delta_1))   \\
\end{diagram}
\end{equation}
It was conjectured in \cite{AWZ} that the square above is commutative. This conjecture in particular implies the original conjecture from \cite{MW} that morphism \eqref{mw} is injective. From Theorem \ref{CalC} and the fact that $\nabla_1$ preserves the weight filtration (Corollary $4.4.4$ in \cite{AK}) the commutativity of square \eqref{conc1} follows from the commutativity of the following square:
\begin{equation}
\begin{diagram}[height=3em,width=3em]
H^{\hdot}(\textsf H_n\textsf {GC}_{2d}) & &  \rTo_{}^{\textsf{cgp}_n} &  &   \prod_{g\geq 0, 2g+n-2>0}^{\infty}H_c^{\hdot+2dg}(\mathcal M_{g,n},\mathbb Q) &  \\
\uTo_{\sim}^{\textsf{az}} & & &  & \uTo_ {\sim}^{\textsf{cost}}  && \\
H^{\hdot}(\textsf {H}_n\textsf {OGC}_{2d+1})& &  \rTo_{}^{\textsf{awz}} &  & H^{\hdot+n}(\textsf{RGC}_d^{[n]})   \\
\end{diagram}
\end{equation}
By $\textsf H_n\textsf {GC}_{2d}^{\hdot}(\delta)$ we have denoted the hairy graph complex \cite{CGP2} with labelled hairs, $\textsf {H}_n\textsf {OGC}_{2d+1}^{\hdot}(\delta)$ is the hairy oriented graph complex \cite{AWZ} and $\textsf{RGC}^{\hdot\, [n]}_d(\delta)$ is a ribbon graph complex which consists of ribbon graphs with $[n]$-labelled boundaries. Where $\textsf{az}$ is a version of morphism \eqref{ziv} for hairy graph complexes \cite{AWZ} (see also \cite{AZ}), $\textsf {cgp}_n$ is a "weight zero inclusion" \cite{CGP2} and $\textsf{awz}$ is a version of \eqref{mw} for hairy graph complexes.

\end{remark}

\begin{remark} We can define a degree $-1$ DG-Lie algebra structure on the totality of $C_c^{\hdot}(\mathcal M_g,\mathbb Q):$
$$
\{\bullet,\bullet\}\colon \prod_{g=2}^{\infty} C_c^{\hdot}(\mathcal M_g,\mathbb Q)\times \prod_{g=2}^{\infty} C_c^{\hdot}(\mathcal M_g,\mathbb Q)\longrightarrow \prod_{g=2}^{\infty} C_c^{\hdot+1}(\mathcal M_g,\mathbb Q)
$$
By the rule:
\begin{equation}\label{bracket}
 \{\bullet,\bullet\}:=\mathrm {Res}_{{\mathcal M}_{\Gamma} /\mathrm {Aut}(G)}\circ p_!\circ (\pi^*\otimes \pi^*)(\bullet,\bullet),\qquad G={\xy
 (0,0)*{{\bullet}^{g_1}}="a",
(6,0)*{{}^{g_2}{\bullet}}="b",
\ar @{-} "a";"b" <0pt>
\endxy}
\end{equation}
Where morphisms in the definitions are:
\begin{equation}\label{bracket1}
\begin{diagram}[height=2.5em,width=2.5em]
 \mathcal M_{G}:=\mathcal M_{g_1,1}\times \mathcal M_{g_2,1}  &   \rTo^{{p}}  &   {\mathcal M}_{G} /\mathrm {Aut}(G) &  \\
\dTo^{\pi\times \pi} & &  && \\
 \mathcal M_{g_1}\times \mathcal M_{g_2}   &      &    \\
\end{diagram}
\end{equation}
By $\pi$ we have denoted a forgetful (proper) morphism of stacks and $\mathrm {Res}_{{\mathcal M}_{\Gamma} /\mathrm {Aut}(\Gamma)}$ is a residue morphism along the normalised open stratum in $\overline{\mathcal M}_{g_1+g_2}.$ 
We believe that via Theorem \ref{CalC} shifted DG-Lie algebra structure \eqref{bracket} coincides with the combinatorial DG-Lie algebra structure on the Merkulov-Willwacher ribbon graph complex.

\end{remark}

\section{Appendix: The Teichmüller space of a bordered surface}
\subsection{Preliminaries} Let $(X,d)$ be a metric spaces. Recall that a \textit{geodesic} $\gamma$ joining points $x$ and $y$ is an isometric morphism $\gamma\colon [0,1]\rightarrow X$ such that $\gamma(0)=x$ and $\gamma(1)=y$, we write $[x,y]\subset X$ for the set $\gamma([0,1]),$ We say that the metric spaces is \textit{uniquely geodesic} if for any two points there exists a unique geodesic joining them. The important class of uniquely geodesic comes from \textit{$CAT(0)$-spaces.} \cite{BH}. Namely we say that a complete non-empty metric space $(X,d)$ is $CAT(0)$-space\footnote{Sometimes also called \textit{Hadamard spaces.}} if for any two points $x,y\in X$ there exists a midpoint $m$ for these two points i.e. $d(x,m)=d(y,m)=d(x,y)/2$ (see \cite{BH} Definition $1.1$).\footnote{Note that in \cite{BH} it is assumed that $CAT(0)$-spaces are not necessarily complete.} One can show that every $CAT(0)$-space is uniquely geodesic with geodesic varying continuously \cite{BH} (Proposition $1.4$). We say that a subset $C\subset X$ is \textit{convex} if for any $x$ and $y$ in $C$ the unique geodesic $[x,y]$ joining points $x$ and $y$ lies in $C.$ Recall that for a point $x\in X$ in a metric spaces $X$ and a subset $C$ in $X$ a distance $d(x,C)$ from the point $x$ to the subset $C$ is defined by the rule $d(x,C):=\inf_{y\in C} d(x,y).$ We have the following well-known result (Proposition $2.4$ \cite{BH}):

\begin{Prop}\label{ortp} Let $X$ be $CAT(0)$-space then for every closed convex subset $C$ and $x\in X$ there exists a unique point $\pi_C(x)\in X$ such that:
$$
d(x,C)=d(c,\pi_C(x))
$$
The point $\pi_C(x)$ is called the \textit{nearest point} in $C$ to a point $x.$ 
\end{Prop}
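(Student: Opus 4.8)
The plan is to run the classical minimizing-sequence argument, with the $CAT(0)$ comparison inequality (equivalently the Bruhat--Tits CN inequality) playing the role the parallelogram law plays in a Hilbert space. Set $\rho:=d(x,C)=\inf_{c\in C}d(x,c)$ and choose a sequence $c_n\in C$ with $d(x,c_n)\to\rho$. Before anything else I would record the standard fact (Bridson--Haefliger \cite{BH}) that the midpoint characterisation of $CAT(0)$ used above is equivalent to the comparison inequality for geodesic triangles, so that for the triangle with vertices $x,c_n,c_m$ and the midpoint $m_{nm}$ of the side $[c_n,c_m]$ one has
$$d(x,m_{nm})^2\le \tfrac12 d(x,c_n)^2+\tfrac12 d(x,c_m)^2-\tfrac14 d(c_n,c_m)^2.$$

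First I would prove existence. Since $C$ is convex, $m_{nm}\in C$, hence $d(x,m_{nm})\ge\rho$, and the displayed inequality gives $d(c_n,c_m)^2\le 2\,d(x,c_n)^2+2\,d(x,c_m)^2-4\rho^2$. The right-hand side tends to $0$ as $n,m\to\infty$, so $(c_n)$ is a Cauchy sequence. By completeness of $X$ it converges to a point $\pi_C(x)$, which lies in $C$ because $C$ is closed, and $d(x,\pi_C(x))=\rho$ by continuity of the metric. This produces a nearest point.

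For uniqueness I would argue in exactly the same way: if $c,c'\in C$ both satisfy $d(x,c)=d(x,c')=\rho$, let $m$ be the midpoint of the geodesic $[c,c']\subset C$; the comparison inequality gives $d(x,m)^2\le \rho^2-\tfrac14 d(c,c')^2$, while $d(x,m)\ge\rho$ forces $d(c,c')=0$, i.e.\ $c=c'$. There is no genuine obstacle in this argument; the only step deserving care is the passage from the midpoint definition of $CAT(0)$ to the triangle comparison inequality, which is a cited result in \cite{BH}, after which everything is routine.
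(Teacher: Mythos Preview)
Your argument is correct and is precisely the standard proof (the Bruhat--Tits/CN inequality applied to a minimizing sequence, then to two putative minimizers). The paper does not give its own proof of this proposition; it simply records it as a well-known result and cites Proposition~2.4 of \cite{BH}, which is exactly the argument you have reproduced.
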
 
Hence for any closed and convex subset in $CAT(0)$-space one can define the \textit{nearest point projection}:
$$
\Pi_C\colon X\longrightarrow C,\quad \Pi_C\colon x\mapsto \pi_C(x).
$$
This morphism is $1$-Lipschitz and hence continues. Moreover one can show (Proposition $2.4$ \cite{BH}) that the inclusion $C\longrightarrow X$ is a deformation retract and in particular taking $C$ to be a point one can show that every $CAT(0)$-space is contractible. 

\par\medskip 
Let $S_{g,n}$ be a closed topological surface of genus $g$ with $n$-punctures. We define the \textit{Teichmüller space} $\mathcal T_{g,n}$ of $S_{g,n}$ by the rule: a set of pairs $(X,g),$ where $X$ is a Riemann surface and $g\colon \Sigma\rightarrow X$ is a diffeomorphism modulo the equivalence: $(X,g) \cong (Y,f)$ if and only if $fg^{-1}$ is isotopic to a biholomorphic morphism. One can show that the Teichmüller space $\mathcal T_{g,n}$ is a manifold with the cotangent space defined by the rule: a fiber over a point $(X,g)$ is given by the space $Q(X)$ of holomorphic quadratic differentials on $X$ with at most simple poles at the punctures. Recall that every Riemann surface $X$ has a hyperbolic structure with a hyperbolic metric $dh^2.$ We define the Petersson Hermetian pairing i.e. a structure of the Riemannian manifold on $\mathcal T_{g,n}$ by the rule:
$$
<\psi,\phi>:=\int_{S_{g,n}} \phi \overline{\psi}(dh^2)^{-1}
$$
Then we define the \textit{Weil-Petersson metric} $d_{WP}(x,y)$ on $\mathcal T_{g,n}$ as the dual to the Hermitian pairing above \cite{AHL}. The important fact is that this metric is Kähler and uniquely geodesic. 

Denote by $C(S_{g,n})$ the complex of curves in $S_{g,n}.$ This is a simplicial complex with $0$-simplicies given by homotopy classes of non-trivial nonperipheral simple closed curves in $S_{g,n}.$ A maximal simplex has $3g-3+n$ element. A free homotopy class $\alpha$ of a curve in $S_{g,n}$ determines a length function $\ell_{\alpha}$ on a Teichmüller space $\mathcal T_{g,n}$ namely on an element $(X,f)$ we set a value of $\ell_{\alpha}$ to be a length of a geodesic homotopic to $f(\alpha).$ For a simplex $\sigma\in C(S_{g,n})$ we define a space $\mathcal T(\sigma):=\{X\,|\, \ell_{\alpha}(X)=0\, \mathrm{iff}\, \alpha \in \sigma\}.$ The completion of the Teichmüller space with respect to the Weil-Petersson metric $d_{WP}$ will called the \textit{augmented Teichmüller space} and denoted by $\overline{\mathcal T}_{g,n}$ \cite{ABI}. We have the following \cite{Mas}:
\begin{Th} The boundary $\partial \overline{\mathcal T}_{g,n}:=\overline{\mathcal T}_{g,n}\setminus \mathcal T_{g,n}$ of the Weil-Petersson completion of the Teichmüller space $\mathcal T_{g,n}$ is given by:
$$
\partial\overline{\mathcal T}_{g,n}\cong \bigcup_{\sigma \in C(S_{g,n})} \mathcal T(\sigma)
$$
\end{Th}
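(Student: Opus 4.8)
The plan is to identify the Weil--Petersson metric completion $\overline{\mathcal T}_{g,n}$ with a space of noded Riemann surfaces and then to match the boundary stratum-by-stratum with $\bigcup_{\sigma}\mathcal T(\sigma)$, the union being over the (nonempty) simplices $\sigma\in C(S_{g,n})$, and a single $\mathcal T(\sigma)$ being, by definition, the Teichmüller space of the surface obtained by pinching the curves of $\sigma$, i.e. a product of Teichmüller spaces of the normalised pieces. The argument has a soft part --- that the Cauchy sequences escaping to infinity are exactly those pinching a simplex, and that pinching rays have finite length --- and a hard part --- that near a boundary point the completion is a polydisk times a ball in the corresponding stratum.

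\textbf{Finite length of pinching rays.} Fix a basepoint and Fenchel--Nielsen coordinates $(\ell_\alpha,\tau_\alpha)_\alpha$ adapted to a pants decomposition containing $\sigma=\{\alpha_1,\dots,\alpha_k\}$. The key input, due to Wolpert, is that near $\ell_{\alpha_i}=0$ one has $g_{WP}(\partial_{\ell_{\alpha_i}},\partial_{\ell_{\alpha_i}})=O(\ell_{\alpha_i}^{-1})$ (equivalently $\|\nabla\ell_{\alpha_i}\|^2_{WP}\sim\tfrac{2}{\pi}\ell_{\alpha_i}$). Since $\int_0^{\varepsilon}\ell^{-1/2}\,d\ell<\infty$, the ray sending $\ell_{\alpha_1},\dots,\ell_{\alpha_k}\to 0$ with the remaining coordinates fixed has finite Weil--Petersson length, hence is Cauchy; letting those coordinates vary shows the whole stratum $\mathcal T(\sigma)$ embeds in $\overline{\mathcal T}_{g,n}$ at finite distance.

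\textbf{Escaping Cauchy sequences pinch a simplex.} If a Cauchy sequence $(X_j)$ leaves every compact subset of $\mathcal T_{g,n}$, then by Mumford's compactness theorem some simple closed geodesic has length $\to 0$ along a subsequence; the Cauchy property promotes $\liminf=0$ to $\lim=0$ for every such class, and the Collar Lemma (short geodesics have disjoint embedded collars of diverging width) forces the classes of vanishing length to be pairwise disjoint and nonperipheral, i.e. a simplex $\sigma\in C(S_{g,n})$. Reading off the limit in the Fenchel--Nielsen coordinates of the unpinched curves exhibits it as a point of $\mathcal T(\sigma)$; a bounded Cauchy sequence converges inside $\mathcal T_{g,n}$. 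Hence set-theoretically $\partial\overline{\mathcal T}_{g,n}=\bigcup_{\sigma}\mathcal T(\sigma)$.

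\textbf{Local model, and the main obstacle.} It remains --- and this is the heart of the matter --- to show the decomposition is ``honest'': distinct simplices, or distinct points of the same $\mathcal T(\sigma)$, give distinct boundary points, and the topology is the expected one. For this I would pass to plumbing coordinates: near $x_0\in\mathcal T(\sigma)$ with $|\sigma|=k$ there are holomorphic coordinates $(t_1,\dots,t_k,s)$ in which $\mathcal T_{g,n}$ is $\{\prod_i t_i\neq 0\}$ and the candidate neighbourhood in the completion is a polydisk $\{|t_i|<\rho\}$ times a ball in $\mathcal T(\sigma)$, the strata being the coordinate loci $\{t_i=0,\ i\in T\}$. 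One must then show that the Weil--Petersson metric, written in these coordinates, extends continuously across $\{t_i=0\}$, its restriction to each coordinate locus being the Weil--Petersson metric of the corresponding stratum. This is the delicate analytic point: it requires estimating the Petersson pairing of holomorphic quadratic differentials in the plumbing family, separating the ``node-supported'' differentials dual to the $t_i$ --- for which, after using $\ell_{\alpha_i}\asymp 1/\log(1/|t_i|)$, one obtains the sharp behaviour $\sim|dt_i|^2/\bigl(|t_i|^2\,(\log(1/|t_i|))^{3}\bigr)$ --- from the differentials inherited from the normalised pieces, whose pairing converges to the Weil--Petersson pairing on $\mathcal T(\sigma)$. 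Continuity of this extended metric then identifies the metric completion of $\{\prod_i t_i\neq 0\}$ with the polydisk-times-ball, and these charts patch to the claimed global homeomorphism. The main obstacle is precisely this uniform two-sided control of the metric as several nodes form simultaneously; the earlier steps are comparatively routine. (This is Masur's theorem \cite{Mas}; see also \cite{wolp1}.)
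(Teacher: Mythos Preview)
The paper does not prove this theorem; it is stated with a bare citation to Masur \cite{Mas} and used as background in the Appendix. Your sketch is a faithful outline of Masur's original argument (finite-length pinching rays via Wolpert's gradient estimate, Mumford compactness plus the collar lemma to identify the limiting simplex, and the plumbing/local-model analysis for the topology), so you are supplying exactly the content the paper defers to the literature. Nothing to correct.
\end{document}
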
 
The important result that  the augmented Teichmüller space $\overline{\mathcal T}_{g,n}$ is $CAT(0)$-space \cite{Yam}. Note that a closure $\overline{\mathcal T(\sigma)}:=\cup_{\gamma\subset \sigma} \mathcal T (\gamma)$  of a stratum $\mathcal T(\sigma)$ is a convex closed subspace $i_{\sigma}\colon \overline{\mathcal T(\sigma)}\longrightarrow \overline{\mathcal T}_{g,n}$ and hence the nearest point projection (Lemma \ref{ortp}) is defined:
\begin{equation}\label{ortp2}
\Pi_{\overline{\mathcal T_{\sigma}}}\colon \overline{\mathcal T}_{g,n}\longrightarrow \overline{\mathcal T(\sigma)}
\end{equation}
Note that this morphism is left dual to the canonical closed inclusion:
\begin{equation}\label{inct1}
i_{\sigma}\colon  \overline{\mathcal T(\sigma)}\hookrightarrow \overline{\mathcal T}_{g,n}
\end{equation}
\par\medskip 
Let $Diff^+(S_{g,n})$ be a group of diffeomorphisms of the topological surface. By $Diff^0(S_{g,n})$ we denote the connected component of identity i.e. diffeomorphisms isotopic to identity. We define the \textit{mapping class group $MCG_{g,n}$ of a surface of genus $g$ with $n$ marked points} by the rule:
$$
MCG_{g,n}:=Diff^+(S_{g,n})/Diff^0(S_{g,n})
$$
The mapping class group acts on the Teichmüller space by the rule: $g(X,f):=(X,fg^{-1}).$ This action is properly discontinuous. We have an equivalence of orbifolds:
\begin{equation}\label{qt1}
\mathcal M_{g,n}\cong [\mathcal T_{g,n}/ MCG_{g,n}].
\end{equation}
Note that the action of the mapping class group extends to the augmented Teichmüller space through it is not properly discontinues. Following \cite{HARV} we have :
\begin{Th}\label{qt2} For any $g,$ and $n$ such that $2g+n-2>0$ we have an equivalence of topological stacks:
$$
\overline{\mathcal M}_{g,n}\cong [\overline{\mathcal T}_{g,n}/ MCG_{g,n}].
$$
\end{Th}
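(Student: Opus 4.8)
The plan is to refine the classical projection $\mathcal T_{g,n}\to\mathcal M_{g,n}$ to a surjection $\overline{\mathcal T}_{g,n}\to\overline{\mathcal M}_{g,n}$, check that it is $MCG_{g,n}$-invariant, and then verify that the induced morphism of topological stacks $[\overline{\mathcal T}_{g,n}/MCG_{g,n}]\to\overline{\mathcal M}_{g,n}$ is an equivalence by a local, stratum-by-stratum comparison of charts.

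First I would set up the two compatible stratifications. On the Teichmüller side, the theorem recalled above writes $\overline{\mathcal T}_{g,n}=\bigcup_{\sigma}\mathcal T(\sigma)$ over simplices $\sigma$ of $C(S_{g,n})$, and for each $\sigma$ the stratum $\mathcal T(\sigma)$ is canonically a product $\prod_j\mathcal T(S_j)$ of Teichmüller spaces of the connected pieces $S_j$ obtained by pinching the curves of $\sigma$; a point of $\mathcal T(\sigma)$ is thus a marked stable nodal Riemann surface with nodes labelled by the curves in $\sigma$ (no unstable rational bridges occur, since every piece of a Weil--Petersson limit carries a complete finite-area hyperbolic metric, hence satisfies $2g_j+n_j-2>0$). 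On the Deligne--Mumford side, $\overline{\mathcal M}_{g,n}=\bigsqcup_G\mathcal M(G)$ ranges over stable dual graphs $G$, with $\mathcal M(G)\cong\big(\prod_{v\in V(G)}\mathcal M_{g_v,n_v}\big)/\mathrm{Aut}(G)$. Forgetting the marking sends a point of $\mathcal T(\sigma)$ to a stable curve whose dual graph is the one obtained by collapsing $\sigma$; restricted to $\mathcal T(\sigma)$ this is the product of the classical projections $\mathcal T(S_j)\to\mathcal M_{g_j,n_j}$ followed by gluing along nodes, so its fibres are exactly the $MCG_{g,n}$-orbits.

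The heart of the argument is the local comparison near a boundary point $x\in\mathcal T(\sigma)$. I would use plumbing/clutching coordinates: a neighbourhood of $x$ in $\overline{\mathcal T}_{g,n}$ is homeomorphic to $V\times\prod_{\gamma\in\sigma}\mathbb D_\gamma$, where $V$ is a ball in $\mathcal T(\sigma)$ and the $\mathbb D_\gamma$ are disks with coordinates built from the complex Fenchel--Nielsen data $(\ell_\gamma,\theta_\gamma)$ via $t_\gamma=\exp(-2\pi^2/\ell_\gamma+i\theta_\gamma)$; that this genuinely models the Weil--Petersson completion near $x$ --- in particular the existence of the transverse polydisk slice to the convex stratum $\overline{\mathcal T(\sigma)}$ --- is where the $CAT(0)$-geometry of $\overline{\mathcal T}_{g,n}$ and the nearest-point projection (Proposition \ref{ortp}) enter. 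The stabiliser of $x$ in $MCG_{g,n}$ is the extension of the finite group $\mathrm{Aut}(X_x)$ of the limiting nodal surface by the free abelian group $\bigoplus_{\gamma\in\sigma}\mathbb Z$ of Dehn twists about the pinched curves, and a Dehn twist about $\gamma$ acts on the chart by $\theta_\gamma\mapsto\theta_\gamma+2\pi$, i.e. by a rotation of $\mathbb D_\gamma$ that becomes trivial on the boundary $\{t_\gamma=0\}$. Quotienting the chart by this free abelian subgroup therefore reproduces exactly the standard local model of $\overline{\mathcal M}_{g,n}$ near the corresponding boundary point (a polydisk, possibly further divided by $\mathrm{Aut}(X_x)$ together with the $\mathrm{Aut}(G)$-symmetries permuting identical branches). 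Assembling these charts gives a morphism $[\overline{\mathcal T}_{g,n}/MCG_{g,n}]\to\overline{\mathcal M}_{g,n}$ that is a local equivalence of topological stacks, and since it is a bijection on isomorphism classes of points it is an equivalence; over the open locus this recovers \eqref{qt1}.

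The main obstacle is exactly that the $MCG_{g,n}$-action on $\overline{\mathcal T}_{g,n}$ is \emph{not} properly discontinuous: on the boundary the stabilisers are infinite, so one cannot invoke the open-case argument ``properly discontinuous action $\Rightarrow$ topological Deligne--Mumford stack.'' The analytic content that must be supplied is the precise local structure of the Weil--Petersson completion near the boundary --- existence and shape of the transverse polydisk slices, and the fact that the infinite part of the stabiliser acts on each slice through the compact rotation torus --- after which the comparison is bookkeeping with dual graphs and finite automorphism groups. A secondary point requiring care is identifying the $\mathrm{Aut}(G)$-quotients appearing in $\mathcal M(G)$ with the action of those mapping classes that permute the curves of $\sigma$ while fixing $x$ up to isomorphism.
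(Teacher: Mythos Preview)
The paper does not give its own proof of this statement; it is stated in the Appendix with the attribution ``Following \cite{HARV}'' and no proof environment. So there is nothing to compare against on the paper's side, and your task reduces to whether your sketch is a correct outline of the classical argument.

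Your overall architecture is the standard one and is correct: extend the marking-forgetting map over the augmented Teichm\"uller space, stratify both sides by simplices $\sigma\in C(S_{g,n})$ versus stable dual graphs, and compare local models near a boundary point, where the stabiliser is the extension of the finite group $\mathrm{Aut}(X_x)$ by the free abelian group of Dehn twists along the pinched curves. The identification of the Dehn-twist action with rotation in the plumbing disks, and the resulting match with the Kuranishi/DM local model, is exactly right.

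One point to tighten: you attribute the local product model $V\times\prod_{\gamma}\mathbb D_\gamma$ near a boundary stratum to the $CAT(0)$ nearest-point projection. The $CAT(0)$ projection of Proposition~\ref{ortp} gives you a continuous retraction onto $\overline{\mathcal T(\sigma)}$, but not by itself a \emph{product} neighbourhood with polydisk fibres; for that one needs the analytic input directly (Fenchel--Nielsen/Bers/Abikoff--Masur descriptions of the augmented Teichm\"uller space, or Wolpert's expansion of the WP metric near the boundary). In the paper the $CAT(0)$ property is used for a different purpose (the boundary-shrinking construction in Theorem~\ref{wp}), not to manufacture the local charts of $\overline{\mathcal T}_{g,n}$. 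So in your write-up, justify the polydisk model by citing the Fenchel--Nielsen coordinate description of the augmented Teichm\"uller space rather than the $CAT(0)$ projection; once that model is in hand, the rest of your argument goes through.
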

Let $\sigma \in C(S_{g,n})$ we denote by $MCG_{g,n}^{\sigma}$ the subgroup of $MCG_{g,n}$ consisting of those elements which have representative $g\colon S_{g,n}\longrightarrow S_{g,n},$ such that $g(\gamma)$ is homotopic to $\gamma$
for any $\gamma\in \sigma$ and which fixes each component of $S_{g,n}\setminus \sigma.$ It is easy to see that a morphism \eqref{inct1} is $(MCG_{g,n}^{\sigma},MCG_{g,n})$-equivariant and hence we have the morphism between the corresponding quotients: 
\begin{equation}\label{inct2}
i_{\sigma}\colon [\overline{\mathcal T(\sigma)}/ MCG_{g,n}^{\sigma}]\cong \mathcal D_{\Gamma(\sigma)}\hookrightarrow  \overline{\mathcal M}_{g,n}
\end{equation}
Here $\mathcal D_{\Gamma}$ is a component of a boundary of the Deligne-Mumford moduli stack $\overline{\mathcal M}_{g,n}$ i.e. closure of a locus of stable curves with a dual graph being $\Gamma(\sigma)$ (this graph depends on $\sigma$ in a usual way).
\subsection{Teichmüller space ${}^B\mathcal T_{g,n}$} 

Let $\Sigma$ be a topological surface of genus $g$  with $n$-boundary components. By $D\Sigma_{g,n}:=\Sigma_{g,n}\cup_{\partial \Sigma_{g,n}} \overline{\Sigma}_{g,n}$ we denote the double of this surface i.e. the closed topological surface of genus $2g+n-1$ defined by gluing the surface with the oppositely oriented one along the boundary. We have an involutive diffeomorphism $\tau\colon D\Sigma_{g,n}\longrightarrow D\Sigma_{g,n}$ which act by interchanging $\Sigma_{g,n}$ and $\overline{\Sigma}_{g,n}.$ An element $\tau$ generates a subgroup in a mapping class group of $D\Sigma_{g,n}$ which by abuse of notation we will denote by the symbol $\tau\subset MCG_{2g+n-1}.$ As an element of the mapping class group $\tau$ acts on the Teichmüller space $\mathcal T_{2g+n-1}:$
\begin{align}\label{ra}
&\tau\colon \mathcal T_{2g+n-1}\longrightarrow  \mathcal T_{2g+n-1}\\
& \tau\colon (X,f)\longmapsto (X,f\tau^{-1}).
\end{align}
We consider the  $\tau$-invariant subspace $\mathcal T_{2g+n-1}^{\tau}$ in the Teichmüller space of $D\Sigma_{g,n}.$ By the definition elements in this space are pairs $(X,f)$ where $X$ is a Riemann surface of genus $2g+n-1$ and $f\colon D\Sigma_{g,n}\rightarrow X$ is a diffeomorphism such that $f\tau f^{-1}\colon X\longrightarrow X$ is isotopic to a biholomorphic morphism, modulo the usual equivalence. We denote the locus of fixed points under the diffeomorphism $f\tau f^{-1}$ by $X^{\mathbb R}.$ 

\begin{Def}\label{tb1} For a bordered topological surface $\Sigma_{g,n}$ we define the \textit{Teichmüller space ${}^B\mathcal T_{g,n}$ of $\Sigma_{g,n}$} as elements $(X,f)\in \mathcal T_{2g+n-1}^{\tau}$ with a choice of a component in
$X\setminus X^{\mathbb R}.$ 
\end{Def} 
By the construction we have a finite morphism ${}^B\mathcal T_{g,n}\longrightarrow \mathcal T_{2g+n-1}^{\tau}$ via this morphism we equip the Teichmüller space ${}^B\mathcal T_{g,n}$ with a topology. Let $MCG_{2g+n-1}$ be a mapping class group of a topological surface $D\Sigma_{g,n}.$ We give:
\begin{Def}\label{tb2} For a bordered surface $\Sigma_{g,n}$ we define the \textit{mapping class group $MCG^B_{g,n}$ of $\Sigma_{g,n}$} as a centralizer of $\tau$ in the mapping class group of $D\Sigma_{g,n}:$
$$
MCG_{g,n}^B:=C_{MCG_{2g+n-1}}(\tau)
$$

\end{Def} 
Note that  $\mathcal T_{2g+n-1}^{\tau}$ is stable under the action of the mapping class group. Let $(X,f)\in \mathcal T_{2g+n-1}^{\tau}$ and $g\in MCG_{g,n}^B$ then we have $(X,fg^{-1})$ and $fg^{-1}\tau gf^{-1}.$ Since $\tau g$ is isotopic to $g\tau$ we get that $fg^{-1}\tau gf^{-1}$ is isotopic to a biholomorphic morphism. We lift the action of the mapping class group to ${}^B\mathcal T_{g,n}$ by the obvious rule. We have the following:

\begin{Prop}\label{tb3} For any $g,n$ such that $2g+n-2>0$ we have an equivalence of orbifolds:
$$
\mathcal N_{g,n}\cong [{}^B\mathcal T_{g,n}/ MCG_{g,n}^B].
$$

\end{Prop}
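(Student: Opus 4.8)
The plan is to run the classical uniformization argument behind $\mathcal M_{g,n}\cong[\mathcal T_{g,n}/MCG_{g,n}]$, but $\tau$-equivariantly on the double, so that Proposition \ref{tb3} becomes the conjunction of (i) a homeomorphism between a ``Teichmüller space of marked bordered conformal structures'' and ${}^B\mathcal T_{g,n}$, and (ii) an isomorphism of the relevant mapping class groups compatible with the actions. Concretely, denote by $\widehat{\mathcal T}_{g,n}$ the set of pairs $(X,h)$, with $X$ a smooth compact bordered Riemann surface and $h\colon\Sigma_{g,n}\to X$ a diffeomorphism, modulo isotopy, on which $MCG(\Sigma_{g,n})$ acts by $\gamma\cdot(X,h)=(X,h\gamma^{-1})$; tautologically $\mathcal N_{g,n}\cong[\widehat{\mathcal T}_{g,n}/MCG(\Sigma_{g,n})]$. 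I would then build a doubling map $b\colon\widehat{\mathcal T}_{g,n}\to{}^B\mathcal T_{g,n}$ sending $(X,h)$ to the complex double $DX=X\cup_{\partial X}\overline X$, a closed Riemann surface of genus $2g+n-1$ carrying the canonical anti-biholomorphic involution $\iota_X$ with $(DX)^{\mathbb R}=\partial X$, equipped with the doubled marking $Dh\colon D\Sigma_{g,n}\to DX$ (so that $Dh\circ\tau\circ(Dh)^{-1}=\iota_X$, whence $(DX,Dh)\in\mathcal T_{2g+n-1}^{\tau}$) and with the distinguished component $Dh(\mathrm{int}\,\Sigma_{g,n})$ of $DX\setminus(DX)^{\mathbb R}$, which supplies the extra datum of Definition \ref{tb1}.

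Next I would show $b$ is a homeomorphism. Injectivity is clear: the bordered surface, its boundary and its marking are recovered from $(DX,Dh)$ together with the chosen side. For surjectivity the key input is the \emph{rigidity of real structures}: for $(Y,f)\in\mathcal T_{2g+n-1}^{\tau}$ the map $f\tau f^{-1}$ is only \emph{isotopic} to an anti-biholomorphic involution, but such an involution in a fixed isotopy class of an orientation-reversing mapping class is \emph{unique} (this is the standard fact underlying the theory of real curves, cf.\ the references to ${}^{\mathbb R}\overline{\mathcal M}$ in the paper, e.g.\ \cite{Sep}, \cite{Sil}); so there is a well-defined $\iota_Y$, and cutting $Y$ along $Y^{\mathbb R}$ and taking the closure of the chosen component yields a bordered Riemann surface, with marking transported by $f$. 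Bicontinuity is inherited from the defining topology on ${}^B\mathcal T_{g,n}$ via the finite projection ${}^B\mathcal T_{g,n}\to\mathcal T_{2g+n-1}^{\tau}$, which exhibits ${}^B\mathcal T_{g,n}$ as a covering of the real-analytic submanifold $\mathrm{Fix}(\tau)\subset\mathcal T_{2g+n-1}$; in particular ${}^B\mathcal T_{g,n}$ is a manifold of the expected real dimension $6g-6+3n$. Equivalently one may invoke the complex-double morphism $D$ of \eqref{realm}: restricted to the smooth locus it realizes $\widehat{\mathcal T}_{g,n}$ inside $\mathcal T_{2g+n-1}^{\tau}$ and ${\mathcal N}_{g,n}$ as a ``chosen-side'' version of the real moduli space ${}^{\mathbb R}\mathcal M_{2g+n-1}$, for which $[\,\cdot\,/MCG]$-presentations are standard.

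It then remains to compare the group actions. Doubling is $MCG(\Sigma_{g,n})$-equivariant by construction; and I would identify $MCG(\Sigma_{g,n})$ with the subgroup of $MCG_{g,n}^{B}=C_{MCG_{2g+n-1}}(\tau)$ that preserves the distinguished side, which is of index two (the full centralizer also contains side-swapping classes, obtained by composing $\tau$ with a $\tau$-equivariant orientation-reversing-on-each-side diffeomorphism). Since $b$ identifies ${}^B\mathcal T_{g,n}$ with the two-fold (trivial, sectioned by the marking) cover $\widehat{\mathcal T}_{g,n}\sqcup\widehat{\mathcal T}_{g,n}$ of $\mathcal T_{2g+n-1}^{\tau}$ on which the side-swapping elements act by exchanging the two sheets, one gets $[{}^B\mathcal T_{g,n}/MCG_{g,n}^B]\cong[\widehat{\mathcal T}_{g,n}/MCG(\Sigma_{g,n})]\cong\mathcal N_{g,n}$ as topological (Deligne--Mumford) stacks; the descent is legitimate because $MCG_{g,n}^B$ acts properly discontinuously, inherited from proper discontinuity of $MCG_{2g+n-1}$ on $\mathcal T_{2g+n-1}$. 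The identification $MCG(\Sigma_{g,n})\cong C_{MCG_{2g+n-1}}(\tau)^{\mathrm{side}}$ itself I would prove from the Dehn-twist presentation, in exactly the spirit of the two group lemmas in the proof of Theorem \ref{wp}: every $\tau$-commuting, side-preserving mapping class of $D\Sigma_{g,n}$ descends to $\Sigma_{g,n}$, every mapping class of $\Sigma_{g,n}$ extends $\tau$-equivariantly, and these operations are mutually inverse on generating twists.

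\textbf{Main obstacle.} The delicate point is the bijectivity of $b$, and inside it the rigidity statement — that the anti-holomorphic involution is unique in its isotopy class, so that $\mathrm{Fix}(\tau)\subset\mathcal T_{2g+n-1}$ genuinely parametrizes real (equivalently, bordered) structures rather than something a priori weaker. Coupled to this is the bookkeeping of the ``choice of component'': one must check that $DX\setminus(DX)^{\mathbb R}$ has exactly the two components interchanged by $\iota_X$, that the marking provides a canonical section of the resulting double cover, and that the side-swapping part of $C_{MCG_{2g+n-1}}(\tau)$ acts freely on $\pi_0$, so that adding the datum of Definition \ref{tb1} cuts the group ambiguity down to precisely $MCG(\Sigma_{g,n})$ — no more and no less. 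Everything else (the group isomorphism, proper discontinuity, and the descent to quotient stacks) is routine once these are in place.
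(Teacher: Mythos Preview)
Your argument is correct and follows essentially the same route as the paper: identify points of ${}^B\mathcal T_{g,n}$ with real Riemann surfaces carrying a choice of side, observe that this datum is precisely a bordered surface, and then pass to the quotient by the mapping class group. The paper's proof is extremely terse (it absorbs the rigidity of the anti-holomorphic involution and the topology comparison into citations to \cite{Cost} and \cite{Liu}), whereas you unpack these steps and also make explicit the index-two bookkeeping between $MCG(\Sigma_{g,n})$ and the full centralizer $MCG_{g,n}^{B}$, which the paper does not mention but which is indeed what makes the quotient come out right.
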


\begin{proof} By the construction elements in $[{}^B\mathcal T_{g,n}/ MCG_{g,n}^B]$ are triples $(X,f,C)$ such that $f\tau f^{-1}\colon X\overset{\sim}\longrightarrow X$ is an involutive biholomorphic morphism i.e. a real structure on $X$ and $C$ is a component of $X\setminus X^{\mathbb R}.$ This datum is one to one corresponds to a bordered surface of genus $g$ with $n$ boundary components \cite{Cost}. The rest follows from the construction of topology on $\mathcal N_{g,n}$ \cite{Liu} (cf. \eqref{qt1}).
\end{proof} 

Note that the action \eqref{ra} extends to the augmented Teichmuller space and we consider the corresponding fixed point locus $\overline{\mathcal T}_{2g+n-1}^{\tau}.$ For an element $(X,f)\in\overline{\mathcal T}_{2g+n-1}^{\tau}$ we consider the normalization of every singularity in $X^{\mathbb R},$ which will be a union of connected components $X_{i}.$ Hence we give:
\begin{Def}\label{tb4} For a bordered topological surface $\Sigma_{g,n}$ we define the \textit{augmented Teichmüller space ${}^B\overline{\mathcal T}_{g,n}$ of $\Sigma_{g,n}$} as an element $(X,f)\in \overline{\mathcal T}_{2g+n-1}^{\tau}$ with a choice of a component in $X_i\setminus X^{\mathbb R}_i$ for each component $X_i.$ 
\end{Def} 
Via a forgetful morphism ${}^B\overline{\mathcal T}_{g,n}\longrightarrow  \overline{\mathcal T}_{2g+n-1}^{\tau}$ we equip the augmented Teichmüller space of a bordered surface with a topology. We have the following:

\begin{Prop}\label{tb5} We have an equivalence of topological stacks:

$$
[{}^B\overline{\mathcal T}_{g,n}/MCG_{g,n}^B]\cong \overline{\mathcal N}_{g,n}^L
$$

\end{Prop}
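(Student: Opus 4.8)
The plan is to mimic, in the bordered-surface setting, the proof of Theorem~\ref{qt2} (the Hubbard--Koch type description $\overline{\mathcal M}_{g,n}\cong[\overline{\mathcal T}_{g,n}/MCG_{g,n}]$), transported through the real structure. First I would recall from Definition~\ref{tb4} that a point of ${}^B\overline{\mathcal T}_{g,n}$ is a pair $(X,f)\in\overline{\mathcal T}_{2g+n-1}^\tau$ together with a choice, for each component $X_i$ of the normalization of $X^{\mathbb R}$, of a component of $X_i\setminus X_i^{\mathbb R}$; I would then check that the forgetful map ${}^B\overline{\mathcal T}_{g,n}\to\overline{\mathcal T}_{2g+n-1}^\tau$ is a covering onto its image and use it to equip the source with its topology and orbifold-groupoid structure. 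The heart of the argument is the set-theoretic bijection: given $(X,f,\{C_i\})$, the real structure $f\tau f^{-1}$ on $X$ is an anti-holomorphic involution (isotopic to one, hence equal to one after the standard rigidification), and taking the closed-up quotient of the chosen half $\bigsqcup C_i$ by this involution produces a stable \emph{bordered} nodal surface; nodes of $X$ lying on $X^{\mathbb R}$ produce real nodes of type (a) or (b) depending on whether $\tau$ preserves or swaps the two local branches, while interior nodes of $X$ not on $X^{\mathbb R}$ descend to complex nodes, and this matches exactly the three local singularity models (a),(b),(c) in the definition of $\overline{\mathcal N}_{g,n}^L$. One must also verify the dimension bookkeeping: $\dim_{\mathbb R}\overline{\mathcal T}_{2g+n-1}^\tau = \dim_{\mathbb C}\mathcal T_{2g+n-1} = 6g-6+3n = \dim\overline{\mathcal N}_{g,n}^L$, so the map is a local homeomorphism and (being equivariant and a bijection on points) descends to an equivalence of topological stacks after quotienting by $MCG_{g,n}^B = C_{MCG_{2g+n-1}}(\tau)$, exactly the centralizer that acts on $\overline{\mathcal T}_{2g+n-1}^\tau$ compatibly with the extra decoration.

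Concretely I would carry this out in the following order. Step one: establish that $\overline{\mathcal T}_{2g+n-1}^\tau$ is the Weil--Petersson completion of $\mathcal T_{2g+n-1}^\tau$ and is itself $CAT(0)$ (as a closed convex subspace of the $CAT(0)$ space $\overline{\mathcal T}_{2g+n-1}$, using the $\tau$-invariance of the Weil--Petersson metric), and that its strata are indexed by $\tau$-invariant simplices of the curve complex of $D\Sigma_{g,n}$, matching the real stratification of ${}^{\mathbb R}\overline{\mathcal M}_{2g+n-1}$. Step two: promote the pointwise construction above (half of a stable symmetric Riemann surface $\leftrightarrow$ stable bordered surface, due to Costello and in the smooth case already recorded in Proposition~\ref{tb3}) to the augmented/boundary strata, checking that the stability condition on $(X,f)$ — no rational bridge/tail without enough special points — corresponds under the real double to stability of the bordered surface. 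Step three: check the decoration $\{C_i\}$ on each normalized component is exactly the data needed to rigidify $[{}^B\overline{\mathcal T}_{g,n}/MCG_{g,n}^B]$ to the labelled-boundary moduli stack $\overline{\mathcal N}_{g,n}^L$ rather than some intermediate quotient, i.e. that the residual group $MCG_{g,n}^B$ contains no element acting trivially on the decorated surface, so the quotient stack has the correct isotropy. Step four: invoke Theorem~\ref{qt2} as a black box on the ambient Teichmüller space and restrict to the $\tau$-fixed locus, using that taking $\tau$-fixed points is compatible with the atlas/groupoid presentation since $\tau$ acts through a finite group.

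The main obstacle I expect is Step three together with the node-type matching in Step two: one has to be careful that the two flavors of real nodes ($x^2-y^2=0$ versus $x^2+y^2=0$) both genuinely occur as limits inside $\overline{\mathcal T}_{2g+n-1}^\tau$ and are distinguished precisely by whether the pinched $\tau$-invariant curve is \emph{separating the real locus} or \emph{fixed pointwise} by the involution near the node — equivalently whether, in Masur's boundary description, the vanishing-cycle simplex $\sigma$ consists of curves that $\tau$ fixes with or without reversing orientation. Getting this correspondence exactly right, and confirming that the orbifold/groupoid structures (not just the underlying topological spaces) agree — in particular that the automorphism group of a point of $\overline{\mathcal N}_{g,n}^L$ equals the $MCG_{g,n}^B$-stabilizer of the corresponding decorated symmetric surface — is the delicate point; everything else is a routine transport through the real double of the already-known Hubbard--Koch/Harvey presentation, and I would likely relegate it with a reference to \cite{Cost} and \cite{Liu}, as the surrounding text does for analogous statements.
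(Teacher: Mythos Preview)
Your proposal is correct and follows essentially the same approach as the paper: the paper's own proof is a single sentence invoking Harvey's presentation of the Deligne--Mumford compactification via the augmented Teichm\"uller space together with Liu's construction of $\overline{\mathcal N}_{g,n}^L$, which is exactly the black box you name in Step four combined with the real-double dictionary you spell out in Steps one through three. You have simply unpacked in detail what those two citations encode; the node-type matching and isotropy check you flag as delicate are precisely the content absorbed into the reference to \cite{Liu}.
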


\begin{proof} The result follows from \cite{HARV} and the construction of Liu's compactification \cite{Liu}.

\end{proof}

Note that the space $\overline{\mathcal T}_{2g+n-1}^{\tau}$ is $CAT(0)$-space since it is given as a fixed point set of the group acting by isometries. The augmented Teichmüller space  ${}^B\overline{\mathcal T}_{g,n}$ of a bordered surface inherits properties of this metric. In particular $\sigma\in C(D\Sigma_{g,n})$ be a simplex which is invariant under $\tau$ i.e. $\tau(\gamma)$ is homotopic to $\gamma$ for any $\gamma\in \sigma.$ Analogically to the definition of the stratum in the Teichmüller space we define the stratum ${}^B\mathcal T(\sigma).$ Hence we define the nearest point projection:
\begin{equation}\label{ortp3}
\Pi_{{}^B\overline{\mathcal T(\sigma)}}\colon {}^B\overline{\mathcal T}_{g,n}\longrightarrow {}^B\overline{\mathcal T(\sigma)}
\end{equation} 
by the rule $\Pi_{{}^B\overline{\mathcal T(\sigma)}}(X,f,(X_i)):=(\Pi_{{}^B\overline{\mathcal T(\sigma)}}(X,f),(\Pi_{{}^B\overline{\mathcal T(\sigma)}}(X)_i).$

\bibliographystyle{amsalpha}
\bibliography{tt}

\end{document}